\documentclass[12pt,draftcls,onecolumn]{IEEEtran}

\usepackage{hyperref}  % make clickable links in LaTeX :-)
\hypersetup{hypertexnames=false}

\usepackage{xcolor}
\usepackage{pdfsync}
\usepackage{graphicx}
\usepackage{epsfig}
\usepackage{amssymb}
\usepackage{amsthm}
\usepackage{mathtools}
\usepackage{amsfonts}
\usepackage{lineno}
\usepackage{tikz}
\usetikzlibrary{positioning,shapes}
\usetikzlibrary{arrows}
\usepackage{dsfont}
\usepackage{algorithm}
\usepackage{algorithmicx}
\usepackage{algpseudocode}
\usepackage{amsmath}
\allowdisplaybreaks
\newtheorem{theorem}{Theorem}
\newtheorem{definition}{Definition}
\newtheorem{corollary}{Corollary}
\newtheorem{lemma}{Lemma}
\newtheorem{assumption}{Assumption}
\newtheorem{proposition}{Proposition}
\newtheorem{remark}{Remark}

\newtheorem{problem}{Problem}

\newcommand{\norm}[1]{\left\Vert #1\right\Vert}

\newcommand{\abs}[1]{\left|#1\right|}

\def\field#1{\mathbb #1}%
\def\R{\field{R}}%
\def\N{\field{N}}%
\def\Z{\field{Z}}%
\def\B{\field{B}}%
\newcommand{\X}{\ensuremath{\mathcal X}}

\newcommand{\U}{\ensuremath{\mathcal U}}

\newcommand{\Rn}[1][n]{\R^{#1}}
\newcommand{\Rp}{\R_{\geq 0}}
\newcommand{\Rsp}{\R_{> 0}}
\newcommand{\Zp}{\Z_{\geq 0}}
\def\K{\mathcal{K}}%
\DeclareMathOperator{\id}{id}

\def\KL{\mathcal{KL}}%
\def\Kinf{\mathcal{K}_\infty}%
\let\ol=\overline%
\let\ul=\underline%

\def\Ni{\mathcal{N}_i}

\usepackage{color}

\newcommand{\nom}{\mathrm{nom}}

\usepackage{xspace}	
\providecommand{\fsCLF}{fs-CLF\xspace}

\providecommand{\fsCLFs}{fs-CLFs\xspace}

\usepackage{enumerate}

\title{Distributed model predictive control via finite-step control Lyapunov functions}

\author{
  Navid~Noroozi\thanks{Navid~Noroozi is with SIGNON Deutschland GmbH (DB InfraGO AG), Europaplatz 2, 10557 Berlin, Germany, \texttt{navid.n.noroozi@deutschebahn.com}}, 
    Maryam Sharifi\thanks{Maryam Sharifi is with ABB Corporate Research, Västerås, Sweden, \texttt{maryam.sharifi@se.abb.com}}
}

\begin{document}
\maketitle
%\tableofcontents

\begin{abstract}
As opposed to classical converse Lyapunov theorems, finite-step converse results are constructive and offer a different starting point:
for sufficiently large finite step ahead, say $M$, in an explicit sense, 
any scaled norm can serve as a converse finite-step Lyapunov function. 
As for interconnected discrete-time systems, 
similar lines of argument lead to ``non-conservative'' small-gain conditions.
Motivated by this viewpoint, 
this paper develops a distributed model predictive control framework for constrained interconnected nonlinear discrete-time systems.
Each subsystem solves one local optimization problem at each system time step instant by setting the local stage function in form of a local control finite-step like Lyapunov function, 
using time-aligned neighbor predictions, optimized state-constraint tightening radii, and a finite-step small-gain terminal inequality.
Since received neighbor predictions need not equal the trajectories generated by future receding-horizon optimizations, the nominal converse certificates do not alone ensure recursive feasibility or stability.
We therefore develop shift-compatible constraint margins, a local one-step terminal feasibility test for networks that are affine in control, and an analytical bound for the prediction and reoptimization mismatch.
The resulting analysis gives recursive feasibility, constraint satisfaction, and a practical $M$-step Lyapunov estimate, with asymptotic convergence when the prediction and reoptimization mismatch bound tends to zero.
For constrained linear networks, the conditions reduce to finite-dimensional matrix, QP, and SOCP tests.
The framework is specialized to current sharing and terminal bus voltage safety under DC/DC power converters' operational constraints in a two-DGU DC microgrid evaluated on a small laboratory-scale prototype.
\end{abstract}

\begin{IEEEkeywords}
Distributed MPC, finite-step control Lyapunov functions, small-gain conditions, recursive feasibility, DC microgrids, current sharing, experimental validation.
\end{IEEEkeywords}

\section{Introduction}

Large-scale engineered systems are increasingly organized as networks of dynamically coupled subsystems.
Their control design must reconcile local computational scalability and limited communication with explicit state and input constraints and a network-level stability guarantee.
Distributed model predictive control (DMPC) is a natural framework for this purpose because it decomposes online optimization while retaining model-based prediction and constraint handling \cite{Scattolini.2009,Christofides.2013}.

The central motivation of this work is to enjoy the constructive converse viewpoint offered by finite-step Lyapunov theory.
While classical converse Lyapunov theorems are mainly existential, for discrete-time systems, the finite-step converse Lyapunov theorems, e.g., \cite[Theorem~13]{Geiselhart.2014c} is constructive in a more useful sense: 
if an asymptotic stability decay rate $\beta$ satisfies $\beta(r,M)<r$ for a sufficiently large integer $M$, then every function of the form $\eta(\norm{x})$, $\eta\in\Kinf$, is a finite-step Lyapunov function.
In particular, for a globally exponentially stable system with $\norm{x(t,\xi)}\leq C\mu^t\norm{\xi}$, any $M$ satisfying $C\mu^M<1$ is admissible.

The same idea extends to interconnected discrete-time systems.
Non-conservative finite-step small-gain theorems allow the local functions to decrease only after $M$ steps and do not require every isolated subsystem to be one-step stable.
Under the certain decay rate assumptions, e.g. cf.~\cite[Theorems~IV.5 and IV.9]{Geiselhart.2015}.
In particular for globally exponentially stable networks, one may choose simple local norm functions, increase $M$ until the finite-step gains satisfy the cyclic small-gain condition, and explicitly construct a Lyapunov function for the overall interconnection~\cite{Geiselhart.2015,Gielen.2015}.
The term \emph{non-conservative} is used here in this precise converse sense: on the covered system classes, the finite-step small-gain conditions are sufficient and necessary.
It does \emph{not} imply that every numerical bound introduced later for our DMPC setting---such as Lipschitz estimates, uncertainty radii, or admissibility margins---is itself non-conservative.

This constructive setting motivated the centralized fs-CLF-based multi-step MPC of \cite{Noroozi.2020}.
There, a finite-step control Lyapunov function is used as the stage cost, a hard $M$-step contraction is imposed, and the associated admissible finite-step feedback provides a feasible candidate for the optimization problem.
The present paper extends this synthesis principle from a single system to a network of constrained systems.
The extension is nontrivial because a local MPC controller does not know the neighboring trajectories that will actually be generated by future receding-horizon optimizations; it only receives announced predictions.
Consequently, the nominal finite-step Lyapunov and small-gain certificates do not by themselves establish recursive feasibility or stability of the distributed implementation.  The prediction mismatch must be represented explicitly, and feasibility at the next sampling instant must be proved despite packet variation and reoptimization.

The architecture considered here is parallel, non-iterative, and non-cooperative.
At each sampling instant, every subsystem performs one local optimization using one set of received neighbor packets; no inner negotiation is used to recover a network-wide optimum, and each stage cost depends only on the local state.
Non-iterative and non-cooperative DMPC are established directions \cite{Dunbar.2007,FarinaScattolini.2012}.
The closest Lyapunov-based predecessor is the almost-decentralized nonlinear MPC of Hermans et al. \cite{Hermans.2010}, which also treats dynamically coupled discrete-time nonlinear subsystems with one neighbor-information exchange per sample.  Its local controller enforces a one-step max-type control-Lyapunov inequality, supplemented by a Lyapunov--Razumikhin window.  Our distinction is therefore not simply a longer prediction horizon: the present design is rooted in constructive $M$-step converse theory and in cyclic finite-step small-gain conditions that may remain applicable even when one-step subsystem Lyapunov inequalities are unavailable.

Lyapunov and small-gain arguments also appear in other decentralized and distributed MPC designs \cite{MagniScattolini.2006,RaimondoISS.2007,RaimondoIterative.2009}.
The recent small-gain-based DMPC of Zheng et al. \cite{ZhengSmallGain.2025} is particularly related in spirit because it permits non-simultaneous local Lyapunov decrease.  Their setting, however, is continuous-time and is built around pre-existing local auxiliary controllers and instantaneous ISS-Lyapunov derivative inequalities.  The resulting small-gain theorem is used as a sufficient stability condition; it does not provide the discrete-time constructive converse mechanism by which simple local norm candidates and admissible gains are recovered by increasing $M$.  Nor does it address the prediction-mismatch and recursive-feasibility problem created by one-shot exchange of receding-horizon neighbor trajectories.

A second closely related branch uses trajectory contracts or consistency sets.
Lucia et al.~\cite{Lucia.2015} exchange set-valued contracts that contain future coupling trajectories, propagate reachable sets, impose nested contract consistency, and use robust terminal invariant sets and terminal costs.  That framework is more general with respect to set-valued coupling uncertainty and shared constraints.  
In our work, an exchanged packet is deliberately lighter: it consists of a nominal trajectory and scalar radii.
Recursive feasibility is obtained without a terminal set, through a time-aligned shifted candidate, computable state-constraint margins, and a local one-step terminal feasibility problem.
The terminal condition is the finite-step small-gain inequality motivated by the constructive converse finite-step control Lyapunov theorem.

The specific contributions are as follows.
\begin{enumerate}
\item We formulate a non-iterative DMPC architecture that transfers constructive finite-step Lyapunov and non-conservative small-gain ingredients to local receding-horizon problems.  The novelty is not the formal use of $W_i$ as a stage function or the placement of an $M$-step inequality at the horizon endpoint; it is the treatment required to preserve a meaningful finite-step certificate when each controller has access only to uncertain neighbor predictions.
\item We separate the optimized state-constraint tightening radius, the packet-to-packet shift error, and the actual-to-nominal error accumulated under repeated reoptimization.
For nonlinear systems which are affine in control with stat space defined by polyhedral, and control space defined by polytopes, shift-compatible tightened constraints and a one-step terminal feasibility test yield recursive feasibility and closed-loop constraint satisfaction without imposing a terminal set.
An additional bound on the prediction and reoptimization mismatch then produces a practical $M$-step Lyapunov estimate and asymptotic convergence when the terminal discrepancy tends to zero.
\item For constrained linear networks, we reduce the abstract hypotheses to finite-dimensional tests.
Finite-step gains follow from an $M$-step closed-loop map or semidefinite certificates; the cyclic small-gain condition becomes a linear program in logarithmic scaling variables; and admissibility, shift-margin, terminal-feasibility, and regularity conditions reduce to explicit matrix inequalities, QPs, SOCPs, or scalar interval tests.
\item We specialize the framework to current sharing and terminal bus voltage safety with actuator constraints in a two-DGU DC microgrid.
The closed-set finite-step certificate is constructed for the current-sharing objective, while bus voltage and converter commands are enforced as hard constraints.
Synchronized simulation and hardware experiments assess current sharing, load estimation, constraint satisfaction, solver health, communication timing, and real-time implementability.
\end{enumerate}

The DC-microgrid application complements established consensus, droop-modification, passivity, and secondary averaging methods \cite{Nasirian.2014,TripExperimental.2018} by retaining the electrical coupling in the prediction model and enforcing converter and bus-voltage limits directly.  It also extends our earlier centralized finite-step MPC study \cite{Noroozi2018IFACDCMicrogridMPC} to a distributed implementation with explicit recursive-feasibility and reoptimization analysis.
A customizable Python-native simulation environment is appended to this paper as proof of concept:
Check the git repository \href{https://github.com/navidnoroozi/dmpc-4-dcmg}{\texttt{dmpc-4-dcmg}}, follow the instructions \href{https://github.com/navidnoroozi/dmpc-4-dcmg/blob/main/Python_env/HOW_TO_RUN_WORKFLOW.md}{\texttt{README}} supported with a simulation workflow (copy/paste) guideline \href{https://github.com/navidnoroozi/dmpc-4-dcmg/blob/main/Python_env/HOW_TO_RUN_WORKFLOW.md}{\texttt{HOW-TO-RUN-WORKFLOW}}.

The remainder of the paper is organized as follows.  Section~\ref{sec:preliminaries} introduces notation and reviews the finite-step control-Lyapunov and small-gain ingredients needed later.
Section~\ref{sec:Distributed MPC for interconnected systems} presents the distributed optimal-control problem and communication architecture.
Section~\ref{sec:dist-fsclf-stability} establishes recursive feasibility and finite-step stability.
Section~\ref{sec:Constrained linear networks} gives constructive certificates for constrained linear networks.  Section~\ref{sec:case_study_twodgu} specializes the framework to the two-DGU DC microgrid and reports simulation and hardware results.  The final section concludes the paper.

\section{Preliminaries} \label{sec:preliminaries}
This section introduces the notation and the controlled system.
It also reviews the required stability and admissibility concepts.
the admissible finite-step feedback, finite-step converse Lyapunov functions and small-gain notions used later.
A more detailed development of admissible finite-step feedbacks and fs-CLFs can be found in~\cite[Section~3]{Noroozi.2020}.

\subsection{Notation} \label{sec:notation}

In this paper, $\Rp (\Rsp)$ and $\Zp (\N)$ denote the nonnegative
(positive) real numbers and the nonnegative (positive) integers,
respectively.
For a set $\mathcal{S} \subseteq \Rn$, $\operatorname{int}(\mathcal{S})$ and $\operatorname{co}(\mathcal{S})$, respectively, denote the interior and the convex hull of $\mathcal{S}$.
Given $\mathcal{S} \subseteq \Rn$, $\mathcal{S}^\ell := \underbrace{\mathcal{S} \times \dots \times \mathcal{S}}_{\ell \, \, \mathrm{times}}$.
The $i$th component or partition of $v \in \Rn$ is denoted by $v_i$.
%For any $v, w \in \Rn$, we write $v \gg w$ ($v \geq w$) if and only if $v_i > w_i$ ($v_i \geq w_i$) for each $i \in \{1,\dots,n\}$. If $v\geq w$ but $v\ne w$ we write $v > w$. 
%A vector $v\in\R^n$ is \emph{positive} if $v\gg 0$.
%The relation $v \ngeq w$ holds if and only if there exists some $i \in \{1,\dots,n\}$ such that $v_i < w_i$.
For any $v \in \Rn$, $v^\top$ denotes its transpose.
We write $(v,w)$ to represent $[v^\top,y^\top]^\top$ for $x \in \Rn,y \in \R^p$.
In a more general way, the vector operator $\operatorname{col}_{i=1}^{\ell}(v_i)$ vertically stacks the vectors $v_1\in\R^{n_1},\dots,v_\ell\in\R^{n_1}$ into one long column vector.
For $x \in \Rn$, we, respectively, denote the Euclidean norm and the maximum norm by $\norm{x}$ and by $\abs{x}_\infty$.
For $v_{\Ni}:=\operatorname{col}_{j\in\Ni}(v_j)$, we define $\|v_{\Ni}\|_{\Ni,\infty}:=\max_{j\in\Ni}\|v_j\|$.
The identity function is denoted by $\id$.
Composition of functions is denoted by the symbol $\circ$ and repeated composition of, e.g., a function $\gamma$ by $\gamma^{i}$.
For positive definite functions $\alpha,\gamma$ we write $\alpha<\gamma$ if $\alpha(s)<\gamma(s)$ for all $s>0$.

A set \(\mathcal{S}\subseteq\Rn\) is called a nonempty polyhedron if there exist \(m\in\N\),
\(F\in\mathbb{R}^{m\times n}\), and \(f\in\mathbb{R}^m\) such that
$\mathcal{S}
=
\left\{
z\in\Rn
\,\middle|\,
F_i z\leq f_i
\quad\forall i\in\{1,\ldots,m\}
\right\} \neq \varnothing$,
where \(F_i\) denotes the \(i\)th row of \(F\) and \(f_i\in\R\) denotes the \(i\)th component of \(f\).
Without loss of generality, we assume that $F_i \neq 0$.
We note that a polyhedron $\mathcal{S}$ is automatically closed by definition.
If a polyhedron $\mathcal{S}$ is bounded, it is called a polytope.
For a set $\mathcal{S}\subseteq\Rn$ and a set $\mathcal{E}\subseteq\Rn$ define the
\emph{Pontryagin difference} (set subtraction)
\[
\mathcal{S} \ominus \mathcal{E} := \{z\in\Rn \;:\; z+e\in \mathcal{S} \ \ \forall e\in \mathcal{E}\}.
\]
Let $\mathbb{B}_\Delta:=\{e\in\Rn:\norm{e}\leq \Delta\}$ be the closed ball of radius $\Delta$.
A positive definite (semidefinite) matrix $P \in \R^{n\times n}$ is denoted by $P \succ 0$ ($P \succeq 0$). 

\subsection{System description}
Consider an interconnection of $\ell$ subsystems
\begin{equation}\label{eq:network_decoupled}
 \Sigma_i:\quad x_i(t+1)=g_i\bigl(x_i(t),x_{\Ni}(t),u_i(t)\bigr),
\end{equation}
where $x_i\in\X_i\subseteq\R^{n_i}$, $u_i\in\U_i\subseteq\R^{m_i}$, and $\Ni\subset\{1,\ldots,\ell\}\setminus\{i\}$ is the set of physical neighbors affecting subsystem $i$.
We use $x_{\Ni}:=\operatorname{col}_{j\in\Ni}(x_j)$, $\X_{\Ni}:=\prod_{j\in\Ni}\X_j$.
With $\X:=\prod_{i=1}^{\ell}\X_i$, $\U:=\prod_{i=1}^{\ell}\U_i$, $x:=\operatorname{col}_{i=1}^{\ell}(x_i)$, $u:=\operatorname{col}_{i=1}^{\ell}(u_i)$, and $g:=\operatorname{col}_{i=1}^{\ell}(g_i)$, the network is
\begin{equation}\label{eq:network}
 \Sigma:\quad x(t+1)=g\bigl(x(t),u(t)\bigr),
\end{equation}
where $g(0,0)=0$.  The state constraint $x\in\X$ is imposed on admissible trajectories; hence we regard $g:\X\times\U\to\R^n$ rather than assuming a priori that every input maps $\X$ into itself.  We assume that $g$ is $\K$-bounded on $(\X,\U)$, as defined below.

\begin{definition}\label{def:gKb} \cite{Noroozi.2020}
The map $g$ is \emph{$\K$-bounded} on $(\X,\U)$ if there exist $\kappa_1,\kappa_2\in\K$ such that
\[
 \norm{g(\xi,\mu)}\leq\kappa_1(\norm{\xi})+\kappa_2(\norm{\mu})
 \qquad \forall (\xi,\mu)\in\X\times\U.
\]
\end{definition}
For input-to-state systems, this assumption is necessary; see \cite[Lemma~5]{Geiselhart.2017}.
Componentwise, we assume that for each $i$ there exist $\kappa_{1,i},\kappa_{2,i}\in\K$ such that
$ \norm{g_i(\xi_i,\xi_{\Ni},\mu_i)}
 \leq \kappa_{1,i}\bigl(\norm{(\xi_i,\xi_{\Ni})}\bigr)
      +\kappa_{2,i}(\norm{\mu_i})$
for all $(\xi_i,\xi_{\Ni},\mu_i)\in\X_i\times\X_{\Ni}\times\U_i$.

For the DMPC analysis we assume, componentwise, that each local map satisfies the incremental Lipschitz estimate
\begin{align}\label{eq:Lipschitz_dynamics_p}
\|g_i(\xi_i,\xi_{\Ni},\mu_i)-g_i(\xi_i',\xi_{\Ni}',\mu_i')\| \leq & L_{i,1}\|\xi_i-\xi_i'\| \nonumber\\
 & +L_{i,2}\|\xi_{\Ni}-\xi_{\Ni}'\|_{\Ni,\infty}
 +L_{i,3}\|\mu_i-\mu_i'\|,
\end{align}
with $L_{i,1},L_{i,2}>0$ and $L_{i,3}\geq0$ on the stated constraint domain.
This estimate is used for prediction-error propagation in Sections~\ref{sec:Distributed MPC for interconnected systems} and~\ref{sec:dist-fsclf-stability}; it is not part of the converse finite-step Lyapunov or small-gain results themselves.  No additional continuity assumption on $g_i$ is required below, because \eqref{eq:Lipschitz_dynamics_p} already implies continuity of $g_i$ on this domain.

\subsection{Admissible finite-step feedback control}
For a finite input sequence $\mathbf u_M=(u_0,\ldots,u_{M-1})\in\U^M$, let
\begin{equation}\label{eq:finite_horizon_solution}
 x(0,\xi,\mathbf u_M):=\xi,
 \qquad
 x(l+1,\xi,\mathbf u_M):=g\bigl(x(l,\xi,\mathbf u_M),u_l\bigr)
\end{equation}
for $l=0,\ldots,M-1$.
Let $q:\X\to\U^M$ be a block feedback with $q(\xi)=(q^{[0]}(\xi),\ldots,q^{[M-1]}(\xi))$.
When $q$ is evaluated every $M$ steps, the induced input and state sequences satisfy
\begin{equation}\label{eq:block_feedback_trajectory}
 \begin{aligned}
 u_q(kM+l)&=q^{[l]}(x_q(kM)),\\
 x_q(kM+l+1)&=g\bigl(x_q(kM+l),u_q(kM+l)\bigr),
 \end{aligned}
\end{equation}
for $k\in\Zp$ and $l=0,\ldots,M-1$, with $x_q(0)=\xi$.
Equivalently, $x(\cdot,\xi,u_q)=x_q(\cdot)$.

\begin{definition}\label{def:admissible_finite_step_feedback}
A map $q:\X\to\U^M$ is an \emph{admissible finite-step feedback control law} of length $M$ if, for every $\xi\in\X$ and $l=1,\ldots,M$,
\begin{enumerate}
\item $x(l,\xi,q(\xi))\in\X$;
\item there exists $\kappa_l\in\K$ such that
$\norm{x(l,\xi,q(\xi))}\leq\kappa_l(\norm{\xi})$ for all $\xi\in\X$.
\end{enumerate}
\end{definition}
Thus admissibility combines constraint preservation over the complete block with bounded intermediate growth.  The expanded recursion used in \cite[Section~3]{Noroozi.2020} is equivalent to \eqref{eq:finite_horizon_solution}--\eqref{eq:block_feedback_trajectory}.

\subsection{Stabilizability of network $\Sigma$}\label{sec:finite-step-control Lyapunov-function}
\begin{definition}\label{def:ISS-stabilization}
An admissible finite-step feedback $q$ asymptotically stabilizes \eqref{eq:network} in $\X$ if there exists $\beta\in\mathcal{KL}$ such that
$\norm{x(t,\xi,u_q)}\leq\beta(\norm{\xi},t)$ for all $\xi\in\X$, $t\in\Zp$.
If $\beta(r,t)=C\sigma^t r$ for some $C\geq1$ and $\sigma\in[0,1)$, the stabilization is exponential.
\end{definition}

\begin{definition}\label{def:fsCLF}
Let $M\in\N$ and let $V:\R^n\to\Rp$ be continuous on $\Rn$ and there exist $\ul\alpha,\ol\alpha\in\Kinf$ such that
$\ul\alpha(\norm{\xi})\leq V(\xi)\leq\ol\alpha(\norm{\xi})$
for all $\xi\in\Rn$.
The function $V$ is a \emph{finite-step control Lyapunov function} (fs-CLF) for \eqref{eq:network} if there exist an admissible finite-step feedback control laws $q:\X\to\U^M$ and $\alpha\in\Kinf$, $\alpha<\id$, such that
\begin{equation}\label{eq:decayV}
 V\bigl(x(M,\xi,u_q)\bigr)\leq\alpha(V(\xi))
 \qquad \forall \xi\in\X.
\end{equation}
\end{definition}
For $M=1$, Definition~\ref{def:fsCLF} reduces to a classic control Lyapunov function.  For $M>1$, intermediate increases are permitted; admissibility controls those intermediate states, while \eqref{eq:decayV} supplies contraction at block times.

\begin{proposition}\label{prop:P}\cite[Proposition~13]{Noroozi.2020}
Let $V$ be an fs-CLF and let $q$ be its associated admissible finite-step feedback control laws.
Then $q$ asymptotically stabilizes \eqref{eq:network} in $\X$.
\end{proposition}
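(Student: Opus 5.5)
The argument first builds a $\mathcal{KL}$ bound at the block times $kM$ from the decay inequality \eqref{eq:decayV}, and then fills in the intermediate times using admissibility. Fix $\xi\in\X$ and write $x_q(\cdot)=x(\cdot,\xi,u_q)$ as in \eqref{eq:block_feedback_trajectory}.

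\emph{Block times.} Because $q$ is admissible, $x_q(kM)\in\X$ for every $k$; this follows by induction from item 1 of Definition~\ref{def:admissible_finite_step_feedback} with $l=M$. Since $q$ is re-evaluated at $x_q(kM)$, we have $x_q((k+1)M)=x(M,x_q(kM),q(x_q(kM)))$. Applying \eqref{eq:decayV} at $x_q(kM)$ then gives $V(x_q((k+1)M))\leq\alpha(V(x_q(kM)))$, and iterating,
\[
V(x_q(kM))\leq\alpha^k(V(\xi)).
\]
The sandwich bounds then yield
\[
\norm{x_q(kM)}\leq\ul\alpha^{-1}\circ\alpha^k\circ\ol\alpha(\norm{\xi}).
\]
Since $\alpha\in\Kinf$ with $\alpha<\id$, the sequence $\alpha^k(s)$ is nonincreasing in $k$ and converges to a fixed point of $\alpha$ in $[0,s]$. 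The only such fixed point is $0$, so $\alpha^k(s)\to0$. Also, each $\alpha^k$ is of class $\K$ in $s$. Hence $\tilde\beta(s,k):=\ul\alpha^{-1}(\alpha^k(\ol\alpha(s)))$ is of class $\K$ in $s$, nonincreasing in $k$, and tends to $0$ as $k\to\infty$.

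\emph{Intermediate times.} Write $t=kM+l$ with $l\in\{0,\ldots,M-1\}$. Item 2 of Definition~\ref{def:admissible_finite_step_feedback}, applied at the initial state $x_q(kM)\in\X$, gives $\norm{x_q(t)}\leq\kappa_l(\norm{x_q(kM)})$, where we set $\kappa_0:=\id$. Let $\hat\kappa:=\max_{0\leq l\leq M-1}\kappa_l$, which is again of class $\K$. Then
\[
\norm{x_q(t)}\leq\hat\kappa\bigl(\tilde\beta(\norm{\xi},\lfloor t/M\rfloor)\bigr).
\]

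\emph{Building a genuine $\mathcal{KL}$ function.} The bound above is a step function in $t$, so we convert it. Define
\[
\beta(s,t):=\hat\kappa\bigl(\tilde\beta(s,\lfloor t/M\rfloor)\bigr)+\frac{M-(t\bmod M)}{M}\Bigl[\hat\kappa\bigl(\tilde\beta(s,\lfloor t/M\rfloor-1)\bigr)-\hat\kappa\bigl(\tilde\beta(s,\lfloor t/M\rfloor)\bigr)\Bigr].
\]
For $t<M$ the index $\lfloor t/M\rfloor-1=-1$ is not covered by the block-time bound, so there we use $\tilde\beta(s,-1):=2\tilde\beta(s,0)$. This is a linear interpolation between consecutive block values: it dominates the step bound, it is decreasing in $t$ and continuous in $t$ (once extended to real $t$), and it is of class $\K$ in $s$. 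An alternative is simply to invoke the standard fact that any function that is $\K$ in $s$, nonincreasing in $t$, and vanishes as $t\to\infty$ is upper bounded by a $\mathcal{KL}$ function.

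\emph{Main obstacle.} The expected difficulty is only this last technical step of producing a strictly decreasing $\mathcal{KL}$ majorant, together with the verification that $\alpha^k(s)\to0$, which relies on $\alpha<\id$ pointwise. The dynamical content of the proof is entirely contained in the recursive application of \eqref{eq:decayV} at the block times, and that application is legitimate only because admissibility guarantees $x_q(kM)\in\X$.
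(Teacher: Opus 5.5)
Your proof is correct. Iterating \eqref{eq:decayV} at the block times is legitimate because item~1 of Definition~\ref{def:admissible_finite_step_feedback} keeps $x_q(kM)\in\X$; bounding the intermediate samples through $\hat\kappa=\max_l\kappa_l$ and interpolating linearly between consecutive block values then yields a genuine $\KL$ bound. The paper gives no proof of its own here and simply imports the result from \cite[Proposition~13]{Noroozi.2020}; your self-contained argument has the same iterate--bound--interpolate structure that the paper later uses in Proposition~\ref{prop:practical_form_rigorous}(iv).
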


The constructive converse result that motivates the present control design can be stated compactly as follows.  Consider an autonomous, $\K$-bounded system
\begin{equation}\label{eq:autonomous-system}
 x(t+1)=\hat g(x(t))
\end{equation}
whose solutions satisfy
$ \norm{x(t,\xi)}\leq\beta(\norm{\xi},t)$,
$ \beta\in\mathcal{KL}$.

\begin{proposition}\label{thm:converse-Lyap} \cite[Theorem~13]{Geiselhart.2014c}
Suppose there exists $M\in\N$ such that
\begin{equation}\label{eq:beta-condition}
 \beta(r,M)<r \qquad \forall r>0.
\end{equation}
Then, for every $\eta\in\Kinf$,
\begin{equation}\label{eq:converse-Lyap}
 V(\xi):=\eta(\norm{\xi})
\end{equation}
is a finite-step Lyapunov function for \eqref{eq:autonomous-system}, with $\ul\alpha=\ol\alpha=\eta$ in the comparison bounds and decay rate $\alpha(\cdot)=\eta\circ\beta(\eta^{-1}(\cdot),M)<\id$. If
$\beta(r,t)=C\sigma^t r$, then any $M$ satisfying $C\sigma^M<1$ fulfills \eqref{eq:beta-condition}.
\end{proposition}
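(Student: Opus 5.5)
The argument is a direct verification of the fs-Lyapunov properties for $V(\xi)=\eta(\norm{\xi})$ along the autonomous solutions of \eqref{eq:autonomous-system}, followed by a short check of the exponential case.

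\textbf{Sandwich bounds.} Since $\eta\in\Kinf$ and the norm is continuous, $V$ is continuous and nonnegative. The comparison bounds hold with equality, $\ul\alpha=\ol\alpha=\eta$, so nothing needs to be proved there.

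\textbf{$M$-step decrease.} Fix $\xi$ and use the $\mathcal{KL}$ estimate at $t=M$, together with monotonicity of $\eta$:
\begin{equation*}
V\bigl(x(M,\xi)\bigr)=\eta\bigl(\norm{x(M,\xi)}\bigr)\leq\eta\bigl(\beta(\norm{\xi},M)\bigr).
\end{equation*}
Writing $\norm{\xi}=\eta^{-1}(V(\xi))$ gives $V(x(M,\xi))\leq\alpha(V(\xi))$ with $\alpha(s):=\eta\bigl(\beta(\eta^{-1}(s),M)\bigr)$.

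\textbf{Properties of $\alpha$.} Since $r\mapsto\beta(r,M)$ is class $\K$, $\alpha$ is continuous, zero at zero and strictly increasing, so $\alpha\in\K$. For $s>0$ put $r=\eta^{-1}(s)>0$. Then \eqref{eq:beta-condition} and strict monotonicity of $\eta$ give $\alpha(s)=\eta(\beta(r,M))<\eta(r)=s$, i.e.\ $\alpha<\id$.

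\textbf{Main obstacle: the $\Kinf$ requirement.} Definition~\ref{def:fsCLF} asks for $\alpha\in\Kinf$, but $\beta(\cdot,M)$ need only be class $\K$, so $\alpha$ may be bounded. I would handle this in one of two ways.
\begin{itemize}
\item Note that the cited finite-step Lyapunov notion only requires $\alpha\in\K$ with $\alpha<\id$.
\item Alternatively, replace $\alpha$ by the dominating function $\tilde\alpha(s):=\max\{\alpha(s),\tfrac12 s\}$. This function is class $\Kinf$, satisfies $\tilde\alpha<\id$ (both terms lie strictly below $s$ for $s>0$), and still satisfies $V(x(M,\xi))\leq\tilde\alpha(V(\xi))$.
\end{itemize}
Either way the stated decay rate is valid. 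Here $\alpha$ is understood as an upper bound on the decrease, possibly after this upgrade.

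\textbf{Exponential case.} If $\beta(r,t)=C\sigma^t r$, then $\beta(r,M)=C\sigma^M r<r$ for every $r>0$ exactly when $C\sigma^M<1$. Such an $M$ exists because $\sigma\in[0,1)$ gives $\sigma^M\to0$. Then \eqref{eq:beta-condition} holds, and the decay rate becomes $\alpha(s)=\eta\bigl(C\sigma^M\eta^{-1}(s)\bigr)$, which is $\Kinf$ whenever $C\sigma^M>0$ (the case $\sigma=0$ is trivial).
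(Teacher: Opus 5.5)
Your proof is correct and follows the same route as the cited result \cite[Theorem~13]{Geiselhart.2014c}, which the paper invokes without reproducing. That route is the chain $V(x(M,\xi))\leq\eta\bigl(\beta(\norm{\xi},M)\bigr)=\eta\bigl(\beta(\eta^{-1}(V(\xi)),M)\bigr)$, with $\alpha<\id$ obtained from \eqref{eq:beta-condition} and strict monotonicity of $\eta$, plus the immediate check of the exponential case; your upgrade $\max\{\alpha(s),s/2\}$ is a valid way to meet the $\Kinf$ requirement of Definition~\ref{def:fsCLF} when $\beta(\cdot,M)$ is only of class $\K$.
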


An immediate conclusion of Proposition~\ref{thm:converse-Lyap} is that under the stabilizability decay rate \eqref{eq:beta-condition}, any (linearly) scaled norm can be chosen as \fsCLF candidate for which an admissible finite-step feedback control $q$ exists.
In \cite[Problem~10, Algorithm~11, and Proposition~13]{Noroozi.2020}, this fs-CLF is used as the stage cost and in a hard $M$-step contraction constraint to find such a finite-step feedback control $q$.
In that way, the feasibility of the associated optimization problem is certified by design.
This is the constructive link between the converse Lyapunov function result and multi-step MPC.
We aim to extend this idea for development of a distributed MPC scheme to control network~\eqref{eq:network}.
The next piece of tools that we need is the small-gain conditions (SCGs) as discussed below.

\subsection{Non-conservative small-gain condition}
The network counterpart uses local finite-step Lyapunov-like functions and cyclic SGCs.

\begin{assumption}\label{ass:clf-small-gain}
Let $M\geq1$.
Suppose there exist continuous functions $W_i:\R^{n_i}\to\Rp$, an admissible finite-step block feedback
$q=(q_1,\ldots,q_\ell):\X\to\U^M$ with components $q_i:\X\to\U_i^M$, and gains
$\gamma_{ij}\in\Kinf\cup\{0\}$ such that:
\begin{enumerate}[(i)]
\item for each $i$, there exist $\ul\alpha_i,\ol\alpha_i\in\Kinf$ satisfying
\begin{equation}\label{eq:Wi-bounds}
 \ul\alpha_i(\norm{\xi_i})\leq W_i(\xi_i)
 \leq\ol\alpha_i(\norm{\xi_i});
\end{equation}
\item with $u_q$ denoting the block input induced by $q$,
\begin{equation}\label{eq:Wi-estimate}
 W_i\bigl(x_i(M,\xi,u_q)\bigr)
 \leq\max_{j\in\{1,\ldots,\ell\}}\gamma_{ij}(W_j(\xi_j))
 \qquad \forall \xi\in\X;
\end{equation}
\item the gains in \eqref{eq:Wi-estimate} satisfy
\begin{equation}\label{eq:SGC}
 \gamma_{i_1i_2}\circ\gamma_{i_2i_3}\circ\cdots\circ
 \gamma_{i_ri_1}<\id
\end{equation}
for every cycle $(i_1,\ldots,i_r)$ with distinct indices and $r=1,\ldots,\ell$.
\end{enumerate}
\end{assumption}

\begin{proposition}\label{prop:SG-NGGRW}
If Assumption~\ref{ass:clf-small-gain} holds, then the finite-step feedback control laws $q$ asymptotically stabilizes \eqref{eq:network} in $\X$.
\end{proposition}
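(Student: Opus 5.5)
Under Assumption~\ref{ass:clf-small-gain}, the plan is to build an fs-CLF for the whole network from the local functions $W_i$ and then apply Proposition~\ref{prop:P}.

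\textbf{Step 1: the max-type candidate.} By the cyclic small-gain condition \eqref{eq:SGC} and the standard path construction for max-type small-gain theorems (as in \cite{Geiselhart.2015}), there exist scalings $\sigma_i\in\Kinf$ with
\[
\sigma_i^{-1}\circ\gamma_{ij}\circ\sigma_j<\id \qquad \text{whenever } \gamma_{ij}\neq0 .
\]
Two routes are available for constructing them.
\begin{itemize}
\item In the general nonlinear case, perturb the gains slightly upward to $\tilde\gamma_{ij}>\gamma_{ij}$ so that the cyclic condition still holds strictly. Then define $\sigma_i^{-1}(s):=\max\tilde\gamma_{i i_2}\circ\cdots\circ\tilde\gamma_{i_{k}j}(s)$ over all paths of length at most $\ell$, where $\tilde\gamma_{jj}$ is replaced by $\id$ where needed.
\item In the linear-gain case, it suffices to choose positive weights via a Perron-type argument.
\end{itemize}
Set $V(\xi):=\max_i\sigma_i^{-1}(W_i(\xi_i))$. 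Continuity of $V$ follows from that of the $W_i$. The bounds $\ul\alpha(\norm{\xi})\le V(\xi)\le\ol\alpha(\norm{\xi})$ follow from \eqref{eq:Wi-bounds} together with the equivalence of $\norm{\xi}$ and $\max_i\norm{\xi_i}$. For example, one can take $\ol\alpha:=\max_i\sigma_i^{-1}\circ\ol\alpha_i$ and $\ul\alpha(r):=\min_i\sigma_i^{-1}\circ\ul\alpha_i(r/\sqrt{\ell})$.

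\textbf{Step 2: $M$-step decay.} Using \eqref{eq:Wi-estimate}, for each $i$ we have
\[
\sigma_i^{-1}\bigl(W_i(x_i(M,\xi,u_q))\bigr)\le\max_j\sigma_i^{-1}\circ\gamma_{ij}\circ\sigma_j\bigl(\sigma_j^{-1}(W_j(\xi_j))\bigr)\le\alpha(V(\xi)),
\]
with $\alpha:=\max_{i,j}\sigma_i^{-1}\circ\gamma_{ij}\circ\sigma_j$. This $\alpha$ is positive definite with $\alpha<\id$, being a finite maximum of functions strictly below $\id$. If Definition~\ref{def:fsCLF} requires $\alpha\in\Kinf$, replace $\alpha$ by a $\Kinf$ upper bound that stays below $\id$, e.g. $\max\{\alpha,\tfrac12\id\}$ after taking a monotone majorant. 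Admissibility of $q$ is given in Assumption~\ref{ass:clf-small-gain}, so $V$ is an fs-CLF with feedback $q$, and Proposition~\ref{prop:P} yields asymptotic stabilization in $\X$.

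\textbf{Main obstacle.} The hard step is constructing the scalings $\sigma_i$ from \eqref{eq:SGC} with genuinely $\Kinf$ properties and strict inequality. The cyclic condition alone gives only $<\id$ pointwise, which can degenerate at infinity or near zero, and $\Kinf$ regularity at the limits is needed for $\sigma_i^{-1}$ to be well defined.

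I would first try to cite the known construction in \cite{Geiselhart.2015}, which is the max-type small-gain lemma with an $\Omega$-path. Failing that, a fallback avoids the scalings altogether: show directly that for $k\ge\ell$ block steps, every chain of composed gains of length $k$ contains a cycle. This gives the estimate
\[
\max_i W_i\bigl(x_i(kM)\bigr)\le\rho\bigl(\max_i W_i(\xi_i)\bigr), \qquad \rho<\id,
\]
so the $\ell M$-step version of \eqref{eq:decayV} holds for $\max_i W_i$. The $\K$-bounds between block times, from admissibility, then give a $\KL$ estimate.
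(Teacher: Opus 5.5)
Your main line is the paper's proof. The paper obtains scalings $\sigma_i\in\Kinf$ from \eqref{eq:SGC}, builds $V(\xi)=\max_i\sigma_i^{-1}(W_i(\xi_i))$ by citing \cite[Theorem~IV.1]{Geiselhart.2015} (the same $V$ reappears in \eqref{eq:Wconstruction}), and applies Proposition~\ref{prop:P}. With the scalings taken from the literature, your comparison bounds and your Step~2 decay estimate are correct. Note that $\alpha=\max_{i,j}\sigma_i^{-1}\circ\gamma_{ij}\circ\sigma_j$ is already a finite maximum of $\Kinf$ functions (when some gain is nonzero), so the majorant fix is unnecessary.

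Two of your auxiliary claims are wrong, however, and should be dropped.

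\emph{The explicit path formula.} It has the direction reversed. Take $\gamma_{12}=10\,\id$, $\gamma_{21}=\tfrac1{20}\id$, $\gamma_{11}=\gamma_{22}=0$, so the cycle gain is $\tfrac12$. Your definition gives $\sigma_1^{-1}\approx10\,\id$ and $\sigma_2^{-1}=\id$, hence $\sigma_1^{-1}\circ\gamma_{12}\circ\sigma_2\approx100\,\id$. It is $\sigma_i$ itself that should be the maximum of $\tilde\gamma_{ii_2}\circ\cdots\circ\tilde\gamma_{i_{k-1}i_k}$ over walks leaving $i$ of length at most $\ell-1$, with the trivial walk contributing $\id$. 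Cycle removal then gives $\tilde\gamma_{ij}\circ\sigma_j\le\sigma_i$, and thus $\gamma_{ij}\circ\sigma_j<\sigma_i$. Even so, perturbing the gains upward while keeping every cycle composition below $\id$ is exactly the delicate point you flagged, since those compositions may approach $\id$ near $0$ or at infinity. That is why citing \cite[Theorem~5.5]{Ruffer.2010} or \cite{Geiselhart.2015}, as the paper does, is the right move.

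\emph{The fallback.} A walk that contains a cycle is dominated only by the composition along the shorter walk obtained by deleting the cycle, not by something below $\id$. Walks between different indices can carry large gains. Take $\gamma_{11}=\tfrac12\id$, $\gamma_{12}=10\,\id$, $\gamma_{21}=\tfrac1{20}\id$, $\gamma_{22}=0$ and $k=\ell=2$. Then \eqref{eq:Wi-estimate} yields only $W_1(x_1(2M))\le\max\{\tfrac12W_1(\xi_1),5W_2(\xi_2)\}$, so $\max_iW_i$ need not contract.
\begin{itemize}
\item For linear gains this can be repaired by taking $k$ large, since max-times powers of the gain matrix tend to zero.
\item For nonlinear gains no fixed $k$ works in general. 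With $\gamma_{11}(s)=s-\ln(1+s)$ and the same $\gamma_{12},\gamma_{21}$, the cycle condition holds, but the walk term satisfies $\gamma_{11}^{k-1}(\gamma_{12}(s))\ge10s-(k-1)\ln(1+10s)$, which exceeds $s$ for large $s$.
\end{itemize}
The differing scales of the $W_i$ are exactly what the $\sigma_i$ absorb, so the scalings cannot be bypassed this way.
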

\begin{proof}
The cyclic condition yields small-gain scaling functions from which a global finite-step Lyapunov function is constructed~\cite[Theorem~IV.1]{Geiselhart.2015}.
Proposition~\ref{prop:P} then gives asymptotic stability.
\end{proof}

The \emph{converse} result is the essential motivation for using Assumption~\ref{ass:clf-small-gain} in the present DMPC design.
Under the hypothesis of \cite[Theorem~IV.5]{Geiselhart.2015}, the local functions $W_i$ may be chosen as norms and, for every sufficiently large $M$ satisfying the stated decay bound, gains can be constructed that satisfy both \eqref{eq:Wi-estimate} and the cyclic small-gain condition.
Hence the finite-step functions and small-gain conditions are justified by a \emph{constructive} converse argument rather than postulated solely as terminal ingredients.
Additional radii, the Lipschitz bounds, shift margins, and one-step terminal feasibility conditions introduced below are needed because distributed MPC has access to announced neighbor predictions rather than the future trajectories to which the nominal converse stability/stabilizability theorem applies.

\section{Distributed MPC via fs-CLF}\label{sec:Distributed MPC for interconnected systems}
In our DMPC implementation, every subsystem
$\Sigma_i$ receives a predicted trajectory from each physical neighbor and solves one local
finite-horizon problem at every sampling instant.  No precomputed ancillary feedback law is
used.  Instead, nonnegative radius variables are optimized online and are used to tighten the
nominal state constraints.  A terminal finite-step inequality, constructed from the local
\fsCLFs and the small-gain functions, supplies the contractive ingredient.

An important distinction is required throughout this section.  The optimized radii determine the state-constraint tightening used by the OCP.  Under repeated reoptimization they do
not, by themselves, bound the difference between the actual closed-loop trajectory and the
complete open-loop prediction computed at an earlier time.  The additional error caused by
future neighbor-packet errors and by replacing the planned inputs with newly optimized first
moves is quantified explicitly in Section~\ref{subsec:online_tube_stability}.

\subsection{Distributed communication requirements}

At each $t\in\Zp$, subsystem $\Sigma_i$ receives from every $j\in\Ni$ a predicted trajectory
\[
\bar{\bf x}_j(t)=\bigl(\bar x_j(0|t),\ldots,\bar x_j(M-1|t)\bigr),
\qquad \bar x_j(0|t)=x_j(t).
\]
The collection over $\Ni$ is denoted by
$\bar{\bf x}_{\Ni}(t):=(\bar{\bf x}_j(t))_{j\in\Ni}$.
Given a local input sequence
${\bf u}_{M,i}(t):=(u_i(0|t),\ldots,u_i(M-1|t))$, the nominal dynamics are
\begin{align}
\label{eq:nom-sys}
 x_i^{\nom}(0|t)&=x_i(t),\nonumber\\
 x_i^{\nom}(l+1|t)&=g_i\bigl(x_i^{\nom}(l|t),\bar x_{\Ni}(l|t),u_i(l|t)\bigr),
 \quad l=0,\ldots,M-1,
\end{align}
while the actual subsystem evolves according to~\eqref{eq:network_decoupled}.

To mitigate the mismatch between the actual state trajectories of the neighbors and their received predictions, at each time step $t$ each local OCP is solved in a tightened local state-space by radius $r_i$.
We aim to tighten the local state-space $\X_i$ minimally.
Therefore, the OCP decision vector also contains
\[
 {\bf r}_{M-1,i}(t):=\bigl(r_i(1|t),\ldots,r_i(M-1|t)\bigr),
 \qquad r_i(l|t)\geq0.
\]
For each $t$, we set $r_i(0|t):=0$ by design; this fixed value is not an optimization variable.
For $l=0,\ldots,M-1$, define the nominal (tightened) local state-space $\X^\nom$ at each optimization stage by
\begin{equation}\label{eq:stage_tight_X}
 \X_i^{\nom}(l|t)
 :=\{z\in\R^{n_i}:z+\B_{r_i(l|t)}\subseteq\X_i\}
 =\X_i\ominus\B_{r_i(l|t)}.
\end{equation}
We \emph{additionally} compute $r_i(M|t)$ which is \emph{not} an OCP decision variable.
It is the uniquely derived
auxiliary terminal radius defined after the OCP by the final radius recursion, cf.
\eqref{eq:terminal_radius_aux} below.

At time $t$, controller $i$ forms its packet by a one-step shift-and-insert operation:
\begin{equation}\label{eq:packet_shift_x}
 \bar x_i(0|t):=x_i(t),\qquad
 \bar x_i(l|t):=x_i^{\nom,\star}(l+1|t-1),
\end{equation}
\begin{equation}\label{eq:packet_shift_r}
 \bar r_i(0|t):=0,\qquad
 \bar r_i(l|t):=r_i^\star(l+1|t-1),
\end{equation}
for $l=1,\ldots,M-1$.
Thus the entry with prediction index $l$ refers to the same absolute
time $t+l$ as the nominal state used by the receiving controller.  In particular,
$\bar r_i(M-1|t)$ uses the auxiliary terminal radius computed at time $t-1$.

\begin{assumption}[Prediction exchange]\label{ass:packets}
Fix $M\in\N$.  At every time $t$, controller $i$ broadcasts
\begin{equation}\label{eq:one_step_lag_packet}
 \mathcal P_i(t):=\bigl(\bar{\bf x}_i(t),\bar{\bf r}_i(t)\bigr)
\end{equation}
to its physical neighbors.  Controller $i$ receives
$\{\mathcal P_j(t)\}_{j\in\Ni}$ before solving its local OCP.  The packet entries are generated
by \eqref{eq:packet_shift_x}--\eqref{eq:packet_shift_r}; initial packets at $t=0$ are supplied by
a feasible initialization procedure.
\end{assumption}

Motivated by Assumption~\ref{ass:clf-small-gain}, subsystem $i$ imposes
\begin{equation}\label{eq:terminal_ineq_global_max}
 W_i\bigl(x_i^{\nom}(M|t)\bigr)
 \leq\max_{j\in\{1,\ldots,\ell\}}\gamma_{ij}\bigl(W_j(x_j(t))\bigr).
\end{equation}
The required scalar communication is determined by the sparsity of
$\Gamma=(\gamma_{ij})_{i,j}$.

\begin{assumption}[Availability of small-gain scalars for the terminal inequality]
\label{ass:terminal_comm}
At time $t$, subsystem $j$ broadcasts
\begin{equation}\label{eq:W_broadcast_scalar}
 s_j(t):=W_j(x_j(t))
\end{equation}
to every subsystem $i$ for which $\gamma_{ij}\not\equiv0$.  Equivalently, subsystem $i$ has
access to $\{s_j(t):j\in\mathcal G_i^\gamma\}$, where
$\mathcal G_i^\gamma:=\{j\in\{1,\ldots,\ell\}:\gamma_{ij}\not\equiv0\}$.
\end{assumption}

\begin{remark}
The local form
\begin{equation}\label{eq:terminal_ineq_local_max}
 W_i\bigl(x_i^{\nom}(M|t)\bigr)
 \leq\max_{j\in\mathcal G_i^\gamma}\gamma_{ij}(s_j(t))
\end{equation}
is identical to \eqref{eq:terminal_ineq_global_max} after zero gains are omitted.  Only scalar
values are exchanged on the $\gamma$-graph; full predicted trajectories remain restricted to
the physical-neighbor graph.
\end{remark}

\begin{remark}[Communication burden of Assumption~\ref{ass:terminal_comm}]
At each time step, Assumption~\ref{ass:terminal_comm} requires subsystem $j$ to communicate the scalar
$s_j$
to the subsystems that use it in \eqref{eq:terminal_ineq_global_max}. This channel is typically inexpensive compared with the prediction packets, whose size scales as ($\mathcal{O}(Mn_j)$), whereas $s_j$ consists of only one real number per subsystem and sampling instant.

If the gain matrix $\Gamma=(\gamma_{ij})$ is dense, the scalars may require network-wide dissemination. This can be implemented through a shared broadcast medium, a coordinator or aggregator, or multi-hop flooding or max-consensus over peer-to-peer links. Since only scalar quantities can be disseminated, the additional traffic is generally small relative to the exchanged prediction trajectories, although feasibility still depends on the sampling period, graph diameter, and communication latency.

Global dissemination is needed only for evaluating the right-hand side of the terminal inequality. When it is undesirable, one may instead use the local terminal condition \eqref{eq:terminal_ineq_local_max} or a distributed estimate of the required maximum. These alternatives reduce communication at the price of replacing exact global information by a local or approximate quantity.
\end{remark}

% ============================================================
%  COMMUNICATION ARCHITECTURE FOR DMPC
% ============================================================
The proposed DMPC scheme is independent of a particular communication topology. 
It only requires that the prediction packets in Assumption~\ref{ass:packets} and the scalar terminal information in Assumption~\ref{ass:terminal_comm} to be delivered within the timing prescribed by the control algorithm. 
Several standard communication network architectures can meet these requirements:

\paragraph{Coordinator-assisted architecture}
A gateway, aggregator, or supervisory controller collects the packets $\mathcal{P}_j(t)$, computes the required quantities~\eqref{eq:terminal_ineq_local_max}, and forwards them to the corresponding subsystems.

\paragraph{Peer-to-peer architecture}
Prediction packets are exchanged between physically coupled subsystems. The values $s_j$ as in~\eqref{eq:W_broadcast_scalar} are disseminated through flooding, gossip, or max-consensus, after which each subsystem evaluates~\eqref{eq:terminal_ineq_local_max}.

\paragraph{Hybrid hierarchical architecture}
Neighboring subsystems exchange prediction packets locally, while a supervisory node or cluster head collects the scalars $s_j$ as in~\eqref{eq:W_broadcast_scalar}, computes~\eqref{eq:terminal_ineq_local_max}, and returns the result to each subsystem.  

\begin{remark}
The controller depends only on Assumptions~\ref{ass:packets} and~\ref{ass:terminal_comm}, not on a specific communication realization. The choice among coordinator-assisted, P2P, and hybrid architectures is application-dependent and should reflect latency, reliability, cybersecurity, and available infrastructure~\cite{Scattolini.2009,Sandberg2015CyberphysicalSurvey}.
In the DC microgrid case-study, we follow a hybrid network architecture with a publisher/subscriber messaging pattern, see below for more details.
Apart from that, a detailed network design is outside the scope of this work.
\end{remark}

\subsection{Local distributed OCP with optimized tightening radii}
\label{subsec:ocp_online_tube}
For compact notation define
\begin{equation}\label{eq:radius_budget}
 \mathcal B_i(l|t)
 :=L_{i,2}\max_{j\in\Ni}\bar r_j(l|t)+d_i(l|t),
 \qquad l=0,\ldots,M-1.
\end{equation}
For $l=0,\ldots,M-2$, the packet-variation term computed from the exchanged predictions is
\begin{equation}\label{eq:di_def}
 d_i(l|t)
 :=L_{i,2}\max_{j\in\Ni}
 \bigl\|\bar x_j(l|t)-\bar x_j(l+1|t-1)\bigr\|.
\end{equation}
Both packet entries in \eqref{eq:di_def} predict the same absolute time.  At the final prediction
index there is no time-aligned entry $\bar x_j(M|t-1)$ in a length-$M$ packet.  We therefore
set
\begin{equation}\label{eq:terminal_packet_budget}
 d_i(M-1|t):=d_i^{\mathrm{ter}}(t),\qquad d_i^{\mathrm{ter}}(t)\geq0,
\end{equation}
where $d_i^{\mathrm{ter}}$ is a designer-supplied bound for the final packet-variation term; the admissible default
is $d_i^{\mathrm{ter}}(\cdot)=0$.
Here we formally state the local OCP. 

\begin{problem}\label{prob:OCP-2}
At $t\in\Zp$, let $M\in\N$, $M\geq2$, $\lambda>0$, and functions $W_i$ and $\gamma_{ij}$ satisfying
\eqref{eq:Wi-bounds} and \eqref{eq:SGC} be given.  Under
Assumptions~\ref{ass:packets} and~\ref{ass:terminal_comm}, compute
${\bf u}_{M,i}^\star(t)=\bigl(u_i^\star(0|t),\ldots,u_i^\star(M-1|t)\bigr)$ and
${\bf r}_{M-1,i}^\star(t)=\bigl(r_i^\star(1|t),\ldots,r_i^\star(M-1|t)\bigr)$
as a minimizer of
\begin{equation}\tag{OCP}\label{eq:OCP2}
\begin{aligned}
 \min_{{\bf u}_{M,i}(t),{\bf r}_{M-1,i}(t)}\quad
 &\sum_{l=0}^{M-1}W_i\bigl(x_i^{\nom}(l|t)\bigr)
   +\lambda\sum_{l=1}^{M-1}r_i(l|t)\\
 \mathrm{s.t.}\quad
 &x_i^{\nom}(0|t)=x_i(t),\\
 &x_i^{\nom}(l+1|t)
   =g_i\bigl(x_i^{\nom}(l|t),\bar x_{\Ni}(l|t),u_i(l|t)\bigr),
   &&l=0,\ldots,M-1,\\
 &u_i(l|t)\in\U_i,
   &&l=0,\ldots,M-1,\\
 &r_i(l|t)\geq0,
   &&l=1,\ldots,M-1,\\
 &r_i(l+1|t)\geq L_{i,1}r_i(l|t)+\mathcal B_i(l|t),
   &&l=0,\ldots,M-2,\\
 &x_i^{\nom}(l|t)\in\X_i\ominus\B_{r_i(l|t)},
   &&l=1,\ldots,M-1,\\
 &W_i\bigl(x_i^{\nom}(M|t)\bigr)
   \leq \max_{j\in\mathcal G_i^\gamma}\gamma_{ij}(s_j(t)) =: b_i(t).
\end{aligned}
\end{equation}
\end{problem}

In the OCP, $r_i(0|t)$ appearing in the propagation inequality for $l=0$ is the fixed value specified above. Hence, it is not a component of ${\bf r}_{M-1,i}(t)$.

After an optimizer has been selected, define the auxiliary terminal radius uniquely by
\begin{equation}\label{eq:terminal_radius_aux}
 r_i^\star(M|t)
 :=L_{i,1}r_i^\star(M-1|t)+\mathcal B_i(M-1|t).
\end{equation}
It is not an additional optimization variable and does not alter the implemented OCP.  Whenever an
optimizer is invoked below, existence of a minimizer of \eqref{eq:OCP2} is understood on the feasible
set under consideration.
Following a standard MPC implementation, for every $i$ and $t$, only the first component of the optimized input ${\bf u}_{M,i}^\star(t)$ obtained from~\eqref{eq:OCP2} is applied
\begin{equation}\label{eq:online_tube_law}
 u_i(t)=u_i^\star(0|t),\qquad i=1,\ldots,\ell.
\end{equation}
The complete parallel routine is as follows.
\begin{algorithm}[H]
\caption{DMPC routine with optimized tightening radii}
\label{alg:dist_multi_step_mpc}
\begin{algorithmic}[1]
\State Fix $M\in\N$, $M\geq2$, $\lambda>0$, and initialize feasible packets.
\For{$t=0,1,2,\ldots$}
\State Receive $\{\mathcal P_j(t)\}_{j\in\Ni}$ and the required scalars $s_j(t)$.
\State Compute \eqref{eq:di_def} for $l=0,\ldots,M-2$ and choose
$d_i^{\mathrm{ter}}(t)$ in \eqref{eq:terminal_packet_budget}.
\State Solve \eqref{eq:OCP2} for
$({\bf u}_{M,i}^\star(t),{\bf r}_{M-1,i}^\star(t))$.
\State Compute the auxiliary $r_i^\star(M|t)$ from \eqref{eq:terminal_radius_aux}.
\State Apply $u_i(t)=u_i^\star(0|t)$ to \eqref{eq:network_decoupled}.
\State Measure $x_i(t+1)$, form the shifted packet using
\eqref{eq:packet_shift_x}--\eqref{eq:packet_shift_r}, and broadcast it.
\EndFor
\end{algorithmic}
\end{algorithm}

\section{Feasibility and Stability Guarantees}\label{sec:dist-fsclf-stability}
The recursive-feasibility and stability arguments require different consistency properties.
Recursive feasibility is a property of the \emph{new nominal OCP} after the packets and the
measured initial state have changed.  Stability additionally requires a comparison between the
actual repeatedly reoptimized trajectory and an earlier full-horizon nominal prediction.  These
two comparisons are kept separate below.

\subsection{Recursive feasibility by shift-compatible margins and one-step terminal feasibility}
\label{subsec:online_tube_rec_feas}
Fix a feasible solution of \eqref{eq:OCP2} at time $t$.  At time $t+1$, use the shifted prefix
\begin{equation}\label{eq:shifted_input_candidate}
 \tilde u_i(l|t+1):=u_i^\star(l+1|t),\qquad l=0,\ldots,M-2.
\end{equation}
Let $\tilde x_i^{\nom}(\cdot|t+1)$ denote the nominal trajectory generated from
$\tilde x_i^{\nom}(0|t+1)=x_i(t+1)$, the new packets, and the prefix
\eqref{eq:shifted_input_candidate}.
Take the minimum candidate radii by $\tilde r_i(0|t+1):=0,$
\begin{equation}
\label{eq:candidate_radius_recursion}
 \tilde r_i(l+1|t+1):=L_{i,1}\tilde r_i(l|t+1)+\mathcal B_i(l|t+1),
 \quad l=0,\ldots,M-2,
\end{equation}
and set $\tilde r_i(M|t+1)$ analogously to~\eqref{eq:terminal_radius_aux}.
The difference between the new shifted nominal prefix and the old shifted nominal trajectory is
bounded by the computable sequence $\chi_i(0|t+1):=0$,
\begin{equation}
\label{eq:chi_shift_recursion}
 \chi_i(l+1|t+1):=L_{i,1}\chi_i(l|t+1)
 +L_{i,2}\bigl\|\bar x_{\Ni}(l|t+1)-\bar x_{\Ni}(l+1|t)\bigr\|_{\Ni,\infty},
\end{equation}
for $l=0,\ldots,M-2$.
Indeed, the shifted candidate and the old prediction use the same local input at every prefix
stage, so \eqref{eq:Lipschitz_dynamics_p} and induction give
\begin{equation}\label{eq:nominal_shift_error}
 \bigl\|\tilde x_i^{\nom}(l|t+1)-x_i^{\nom,\star}(l+1|t)\bigr\|
 \leq\chi_i(l|t+1),\qquad l=0,\ldots,M-1.
\end{equation}
For $l=1,\ldots,M-1$, define the total shift margin
\begin{equation}\label{eq:shift_total_margin}
 \delta_i(l|t+1):=\chi_i(l|t+1)+\tilde r_i(l|t+1).
\end{equation}
The following condition states directly, using the Pontryagin-difference notation introduced above, that the old shifted nominal state has enough state-constraint margin to
accommodate both the shift error \eqref{eq:nominal_shift_error} and the new tightening radius.

\begin{assumption}\label{ass:shift_margin_condition}
At every transition from a feasible OCP at time $t$ to the candidate at time $t+1$,
\begin{equation}\label{eq:shift_margin_condition}
 x_i^{\nom,\star}(l+1|t)
 \in \X_i\ominus\B_{\delta_i(l|t+1)},
 \qquad l=1,\ldots,M-1.
\end{equation}
Equivalently,
$x_i^{\nom,\star}(l+1|t)+\B_{\delta_i(l|t+1)}\subseteq\X_i$.
\end{assumption}

The next lemma gives directly checkable forms of \eqref{eq:shift_margin_condition}.
\begin{lemma}
\label{lem:shift_margin_verification}
Let $\X_i=\{z\in\R^{n_i}:F_i z\leq f_i\}$ be a nonempty polyhedron and assume each row
$F_{i,k}$ is nonzero.  Then \eqref{eq:shift_margin_condition} holds at prediction index $l$ if and
only if, for every row $k$,
\begin{equation}\label{eq:shift_margin_polyhedral_certificate}
 F_{i,k}x_i^{\nom,\star}(l+1|t)
 +\|F_{i,k}\|_2\,\delta_i(l|t+1)
 \leq f_{i,k}.
\end{equation}
For a box $\X_i=\{z:\underline x_i\leq z\leq\overline x_i\}$, this is equivalent to
\begin{equation}\label{eq:shift_margin_box_certificate}
 \underline x_i+\delta_i(l|t+1)\mathbf 1
 \leq x_i^{\nom,\star}(l+1|t)
 \leq\overline x_i-\delta_i(l|t+1)\mathbf 1.
\end{equation}
If, moreover, $x_i^{\nom,\star}(l+1|t)$ belongs to
$\mathcal E(c,Q):=\{c+Q^{1/2}w:\|w\|_2\leq1\}$, $Q\succeq0$, and
$\delta_i(l|t+1)\leq\bar\delta$, then
\begin{equation}\label{eq:shift_margin_ellipsoid_certificate}
 F_{i,k}c+\sqrt{F_{i,k}QF_{i,k}^\top}
 +\|F_{i,k}\|_2\bar\delta\leq f_{i,k}
\end{equation}
for every row $k$ is a sufficient uniform certificate.
\end{lemma}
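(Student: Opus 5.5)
The plan is to reduce all three claims to one support-function identity for the Euclidean ball. For a nonempty polyhedron $\X_i=\{z:F_iz\leq f_i\}$, a point $x$, and a radius $\delta\geq0$, the inclusion $x+\B_\delta\subseteq\X_i$ holds if and only if
\[
\sup_{\|e\|\leq\delta}F_{i,k}(x+e)\leq f_{i,k}
\]
for every row $k$. By Cauchy--Schwarz, $\sup_{\|e\|\leq\delta}F_{i,k}e=\|F_{i,k}\|_2\,\delta$, and the supremum is attained at $e=\delta F_{i,k}^\top/\|F_{i,k}\|_2$. This is where we use $F_{i,k}\neq0$; for $\delta=0$ the identity is trivial. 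Applying this with $x=x_i^{\nom,\star}(l+1|t)$ and $\delta=\delta_i(l|t+1)$, and using the equivalent form of \eqref{eq:shift_margin_condition} stated in Assumption~\ref{ass:shift_margin_condition}, gives \eqref{eq:shift_margin_polyhedral_certificate}.

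Sufficiency follows because, for any $e\in\B_\delta$, we have $F_{i,k}(x+e)\leq F_{i,k}x+\|F_{i,k}\|_2\delta\leq f_{i,k}$. Necessity follows by evaluating the constraint at the maximizing $e$ above, which lies in $\B_\delta$. Note that $\delta_i(l|t+1)\geq0$, since it is a sum of $\chi_i$ and $\tilde r_i$, both generated by recursions with nonnegative terms starting from zero.

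For the box, write $\X_i$ with rows $F=\begin{bmatrix}I\\-I\end{bmatrix}$ and $f=(\overline x_i,-\underline x_i)$. Each row is $\pm$ a unit vector with $\|F_{i,k}\|_2=1$. Substituting into \eqref{eq:shift_margin_polyhedral_certificate} gives the componentwise inequalities $x_{(m)}+\delta\leq\overline x_{i,(m)}$ and $-x_{(m)}+\delta\leq-\underline x_{i,(m)}$, which together are exactly \eqref{eq:shift_margin_box_certificate}.

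For the ellipsoidal uniform certificate, suppose $x=c+Q^{1/2}w$ with $\|w\|_2\leq1$. Then $F_{i,k}x\leq F_{i,k}c+\sup_{\|w\|\leq1}F_{i,k}Q^{1/2}w=F_{i,k}c+\|Q^{1/2}F_{i,k}^\top\|_2=F_{i,k}c+\sqrt{F_{i,k}QF_{i,k}^\top}$, using symmetry of $Q^{1/2}$ for $Q\succeq0$. Since $\delta_i(l|t+1)\leq\bar\delta$, we also have $\|F_{i,k}\|_2\delta_i(l|t+1)\leq\|F_{i,k}\|_2\bar\delta$. Adding the two bounds and using \eqref{eq:shift_margin_ellipsoid_certificate} yields \eqref{eq:shift_margin_polyhedral_certificate} for every row. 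The first part then gives \eqref{eq:shift_margin_condition}, uniformly over all points of the ellipsoid.

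There is no real obstacle. The only points needing care are the attainment argument in the necessity direction, which is where nonzero rows matter, and the nonnegativity of $\delta_i$.
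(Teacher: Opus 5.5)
Your proposal is correct and follows essentially the same route as the paper. The paper likewise reduces $x+\B_\delta\subseteq\X_i$ row by row via the dual-norm value $\max_{\|e\|\leq\delta}F_{i,k}e=\delta\|F_{i,k}\|_2$, treats the box as $2n_i$ half-spaces, and uses the ellipsoidal support function $F_{i,k}c+\sqrt{F_{i,k}QF_{i,k}^\top}$ together with $\delta_i\leq\bar\delta$. Your explicit attainment argument and your check that $\delta_i(l|t+1)\geq0$ are details the paper leaves implicit; the nonzero-row hypothesis is not actually needed, since the identity holds trivially when $F_{i,k}=0$.
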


\begin{proof}
Let $z:=x_i^{\nom,\star}(l+1|t)$ and $\delta:=\delta_i(l|t+1)$.  By the definition of the
Pontryagin difference,
\[
 z\in\X_i\ominus\B_\delta
 \quad\Longleftrightarrow\quad
 F_{i,k}(z+e)\leq f_{i,k}
 \quad\forall\,\|e\|_2\leq\delta,\ \forall k.
\]
For each row,
$\max_{\|e\|_2\leq\delta}F_{i,k}e=\delta\|F_{i,k}\|_2$ by the dual-norm formula.
Thus the preceding set inclusion is equivalent row by row to
\eqref{eq:shift_margin_polyhedral_certificate}.  Applying these inequalities to the $2n_i$
half-spaces defining a box gives \eqref{eq:shift_margin_box_certificate}.  Finally,
$\max_{z\in\mathcal E(c,Q)}F_{i,k}z=F_{i,k}c+\sqrt{F_{i,k}QF_{i,k}^\top}$; combining this
support function with $\delta_i\leq\bar\delta$ gives
\eqref{eq:shift_margin_ellipsoid_certificate}.
\end{proof}

\begin{remark}
For polyhedral or box constraints, \eqref{eq:shift_margin_polyhedral_certificate} or
\eqref{eq:shift_margin_box_certificate} can be checked online.  Condition
\eqref{eq:shift_margin_ellipsoid_certificate} is a sufficient offline certificate over a prescribed
ellipsoidal envelope.  For a general closed state set, the mathematically equivalent condition remains
the set inclusion in Assumption~\ref{ass:shift_margin_condition}; its computational treatment depends
on the representation of $\X_i$.
\end{remark}

It remains to verify the terminal inequality for the last input of the shifted candidate.
\begin{problem}[One-step terminal feasibility problem]\label{prob:one_step_completion_problem}
Suppose that the local OCPs are feasible at time $t$.  At time $t+1$, with the measured network
state $x(t+1)$, find $v_i\in\U_i$ such that
\begin{equation}\label{eq:one_step_completion_problem}
 W_i\!\left(g_i\!\left(\tilde x_i^{\nom}(M-1|t+1),
 \bar x_{\Ni}(M-1|t+1),v_i\right)\right)
 \leq \max_{j\in\mathcal G_i^\gamma}\gamma_{ij}\bigl(W_j(x_j(t+1))\bigr).
\end{equation}
\end{problem}
For subsystems that are affine in the control input, this one-step condition has standard convex
representations.

\begin{lemma}[Tractable one-step terminal feasibility tests]
\label{lem:tractable_completion}
Let $z_i:=\tilde x_i^{\nom}(M-1|t+1)$,
$\zeta_{\Ni}:=\bar x_{\Ni}(M-1|t+1)$, and
$b:=\max_{j\in\mathcal G_i^\gamma}\gamma_{ij}(W_j(x_j(t+1)))$.
\begin{enumerate}[(i)]
\item Suppose
$g_i(z_i,\zeta_{\Ni},v_i)=a_i(z_i,\zeta_{\Ni})+G_i v_i$,
$W_i(y)=y^\top P_i y$ with $P_i\succ0$, and
$\U_i=\{v_i:H_i v_i\leq h_i\}$.  Then \eqref{eq:one_step_completion_problem} is equivalent to
\begin{equation}\label{eq:completion_socp}
 H_i v_i\leq h_i,\qquad
 \|P_i^{1/2}(a_i(z_i,\zeta_{\Ni})+G_i v_i)\|_2\leq\sqrt b,
\end{equation}
which is an SOCP feasibility problem.
\item If $v_i$ is scalar, $\U_i=[\ul u_i,\ol u_i]$, and
$W_i(y)=p_i(c_i^\top y)^2$, set
$\alpha_i:=c_i^\top a_i(z_i,\zeta_{\Ni})$,
$\beta_i:=c_i^\top G_i$, and $h_i:=\sqrt{b/p_i}$.  If $\beta_i\neq0$, the condition is feasible
if and only if
\begin{equation}\label{eq:scalar_completion_interval}
 [\ul u_i,\ol u_i]\cap
 \left[\min\left\{\frac{-h_i-\alpha_i}{\beta_i},
                       \frac{ h_i-\alpha_i}{\beta_i}\right\},
       \max\left\{\frac{-h_i-\alpha_i}{\beta_i},
                       \frac{ h_i-\alpha_i}{\beta_i}\right\}\right]
 \neq\varnothing.
\end{equation}
If $\beta_i=0$, it is feasible if and only if $|\alpha_i|\leq h_i$.
\end{enumerate}
\end{lemma}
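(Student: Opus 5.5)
The plan is to substitute the affine structure into the left-hand side of \eqref{eq:one_step_completion_problem} and rewrite the resulting scalar inequality in the stated forms. In both parts the first step is the same: with $z_i$ and $\zeta_{\Ni}$ fixed (they are determined by the measured state, the new packets and the shifted prefix \eqref{eq:shifted_input_candidate}), the successor is $y(v_i)=a_i(z_i,\zeta_{\Ni})+G_iv_i$, which is affine in the only free variable $v_i$. The right-hand side $b$ is a fixed nonnegative scalar, since $W_j\geq0$ and $\gamma_{ij}\in\Kinf\cup\{0\}$. Membership $v_i\in\U_i$ is written out directly from the polyhedral or interval description.

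For (i), I will use $P_i\succ0$ to write $W_i(y)=y^\top P_iy=\|P_i^{1/2}y\|_2^2$ with the symmetric square root. Because $b\geq0$ and both sides are nonnegative, $\|P_i^{1/2}y\|_2^2\leq b$ is equivalent to $\|P_i^{1/2}y\|_2\leq\sqrt b$, by monotonicity of $s\mapsto\sqrt s$ on $\Rp$. Substituting $y=a_i+G_iv_i$ gives a norm of an affine function of $v_i$ bounded by a constant, which is a second-order cone constraint. Together with the linear inequalities $H_iv_i\leq h_i$ this yields \eqref{eq:completion_socp}, and the equivalence holds pointwise in $v_i$, so feasibility sets coincide.

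For (ii), I will use $W_i(y)=p_i(c_i^\top y)^2$ with $p_i>0$, which is implicit in the definition $h_i=\sqrt{b/p_i}$. The condition becomes $(\alpha_i+\beta_iv_i)^2\leq b/p_i$, which is equivalent to $|\alpha_i+\beta_iv_i|\leq h_i$, that is, $-h_i-\alpha_i\leq\beta_iv_i\leq h_i-\alpha_i$. If $\beta_i\neq0$, dividing by $\beta_i$ gives an interval in $v_i$ with endpoints $(\pm h_i-\alpha_i)/\beta_i$. The endpoint order flips when $\beta_i<0$, which is exactly why the interval is written with $\min$ and $\max$. A feasible $v_i\in[\ul u_i,\ol u_i]$ then exists if and only if the two intervals intersect, which is \eqref{eq:scalar_completion_interval}. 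If $\beta_i=0$, the constraint no longer depends on $v_i$. It then holds for all or for no admissible inputs according to whether $|\alpha_i|\leq h_i$, and $\U_i$ is nonempty whenever $\ul u_i\leq\ol u_i$.

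There is no real obstacle here; the argument is elementary. The only points needing care are the following. First, the square-root step requires $b\geq0$. Second, in (ii) one needs $p_i>0$ and a nonempty input interval; I would state both as standing conditions of the setup. Third, the sign case for $\beta_i$ must be handled correctly.
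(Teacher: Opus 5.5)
Your proposal is correct and follows the paper's own proof. In (i) you rewrite $W_i(a_i+G_iv_i)\le b$ as the Euclidean-norm cone inequality, and in (ii) as $|\alpha_i+\beta_iv_i|\le h_i$, which you solve for $v_i$ and intersect with $\U_i$. Your explicit checks on $b\ge0$, $p_i>0$, the sign of $\beta_i$, and nonemptiness of $[\ul u_i,\ol u_i]$ only state conditions that the paper's short argument leaves implicit.
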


\begin{proof}
Under (i), the inequality $W_i(g_i)\leq b$ is exactly the Euclidean-norm inequality in
\eqref{eq:completion_socp}; the input constraints are polyhedral.  Under (ii), the same inequality is
$|\alpha_i+\beta_i v_i|\leq h_i$.  Solving these two affine inequalities and intersecting the resulting
interval with $\U_i$ gives \eqref{eq:scalar_completion_interval}.
\end{proof}

\begin{theorem}[Recursive feasibility]
\label{thm:online_tube_rec_feas}
Consider \eqref{eq:network} and Problem~\ref{prob:OCP-2}, and let
Assumptions~\ref{ass:packets} and~\ref{ass:terminal_comm} hold.  Suppose that the local OCPs are
feasible at time $t$, Assumption~\ref{ass:shift_margin_condition} holds for the transition to $t+1$,
and Problem~\ref{prob:one_step_completion_problem} is feasible for every subsystem at that transition.
Then the local OCPs are feasible at time $t+1$.  Consequently, if these conditions hold at every
transition and the OCPs are feasible at $t=0$, then the OCPs are recursively feasible for all
$t\in\Zp$.
\end{theorem}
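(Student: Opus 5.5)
The proof is constructive: for each subsystem $i$ we build an explicit candidate $(\tilde{\bf u}_{M,i}(t+1),\tilde{\bf r}_{M-1,i}(t+1))$ for \eqref{eq:OCP2} at time $t+1$ and check every constraint in turn. The input candidate consists of the shifted prefix \eqref{eq:shifted_input_candidate} for $l=0,\ldots,M-2$. Its last entry is $\tilde u_i(M-1|t+1):=v_i$, where $v_i\in\U_i$ is a solution of Problem~\ref{prob:one_step_completion_problem}, which exists by hypothesis. The radius candidate is the minimal recursion \eqref{eq:candidate_radius_recursion}, and the nominal trajectory $\tilde x_i^{\nom}(\cdot|t+1)$ is generated from $x_i(t+1)$, the new packets $\bar x_{\Ni}(\cdot|t+1)$, and this input sequence.

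\textbf{Input, sign and propagation constraints.} The shifted entries $u_i^\star(l+1|t)$ lie in $\U_i$ by feasibility at time $t$, and $v_i\in\U_i$ by construction. Since $\mathcal B_i(l|t+1)\geq 0$ (it is built from nonnegative packet radii and norms, with $d_i^{\mathrm{ter}}\geq0$) and $\tilde r_i(0|t+1)=0$, induction gives $\tilde r_i(l|t+1)\geq0$. The propagation inequality then holds with equality by the definition \eqref{eq:candidate_radius_recursion}. The dynamics constraint is satisfied by construction of $\tilde x_i^{\nom}$.

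\textbf{Tightened state constraints.} For $l=1,\ldots,M-1$, the prefix of $\tilde x_i^{\nom}(\cdot|t+1)$ uses the same local inputs as $x_i^{\nom,\star}(\cdot+1|t)$. Its initial state is $x_i(t+1)=g_i(x_i(t),x_{\Ni}(t),u_i^\star(0|t))$, while $x_i^{\nom,\star}(1|t)=g_i(x_i(t),\bar x_{\Ni}(0|t),u_i^\star(0|t))$ and $\bar x_{\Ni}(0|t)=x_{\Ni}(t)$. The two therefore coincide, which matches $\chi_i(0|t+1)=0$. Induction with \eqref{eq:Lipschitz_dynamics_p} then yields \eqref{eq:nominal_shift_error}. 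Take any $e$ with $\|e\|\le\tilde r_i(l|t+1)$. The point $\tilde x_i^{\nom}(l|t+1)+e$ lies within distance $\chi_i+\tilde r_i=\delta_i(l|t+1)$ of $x_i^{\nom,\star}(l+1|t)$. By Assumption~\ref{ass:shift_margin_condition} it therefore lies in $\X_i$, so $\tilde x_i^{\nom}(l|t+1)\in\X_i\ominus\B_{\tilde r_i(l|t+1)}$.

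\textbf{Terminal constraint.} We have $\tilde x_i^{\nom}(M|t+1)=g_i(\tilde x_i^{\nom}(M-1|t+1),\bar x_{\Ni}(M-1|t+1),v_i)$. By \eqref{eq:one_step_completion_problem} and the definition $s_j(t+1)=W_j(x_j(t+1))$ in Assumption~\ref{ass:terminal_comm}, the terminal inequality holds with right-hand side $b_i(t+1)$.

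The candidate is therefore feasible for every $i$, and feasibility at $t+1$ follows. Recursive feasibility for all $t\in\Zp$ follows by induction on $t$, using feasibility at $t=0$ and the hypotheses at every transition.

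The main obstacle is the index bookkeeping in the state-constraint step. We must make sure that $\chi_i$ indeed bounds the deviation, in particular the base case coupling the actual $x_i(t+1)$ with the old prediction via $\bar x_{\Ni}(0|t)=x_{\Ni}(t)$. We must also make sure that the packet indices in \eqref{eq:chi_shift_recursion} are time-aligned with those used in the nominal dynamics. A further point to settle is that existence of a minimizer, as opposed to mere feasibility, is not claimed beyond the paper's standing convention.
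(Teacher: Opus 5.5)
Your proposal is correct and follows essentially the same route as the paper's proof. Both build the candidate from the shifted input prefix with the one-step completion input $v_i$ appended and take the minimal radii from \eqref{eq:candidate_radius_recursion}; the tightened state constraints then follow from combining \eqref{eq:nominal_shift_error} with Assumption~\ref{ass:shift_margin_condition} via the triangle inequality, and the terminal inequality from Problem~\ref{prob:one_step_completion_problem} with $s_j(t+1)=W_j(x_j(t+1))$. Your explicit checks of the base case $x_i(t+1)=x_i^{\nom,\star}(1|t)$ and of $\mathcal B_i(\cdot|t+1)\geq 0$ spell out steps the paper only asserts.
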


\begin{proof}
Fix subsystem $i$ and a feasible optimizer at time $t$.  Since
$\bar x_{\Ni}(0|t)=x_{\Ni}(t)$ and the applied input is $u_i^\star(0|t)$,
$
 x_i(t+1)=x_i^{\nom,\star}(1|t)
$.
Hence the new measured initial state equals the first old nominal successor.  Choose the first
$M-1$ candidate inputs according to \eqref{eq:shifted_input_candidate}; these inputs remain in
$\U_i$.  Choose the candidate radii from \eqref{eq:candidate_radius_recursion}; therefore all radius
propagation and nonnegativity constraints are satisfied.

Fix $l\in\{1,\ldots,M-1\}$ and any $e_i$ with
$\|e_i\|\leq\tilde r_i(l|t+1)$.  Put
$
 d_i:=\tilde x_i^{\nom}(l|t+1)-x_i^{\nom,\star}(l+1|t).
$
By \eqref{eq:nominal_shift_error}, $\|d_i\|\leq\chi_i(l|t+1)$, and hence
$
 \|d_i+e_i\|\leq\chi_i(l|t+1)+\tilde r_i(l|t+1)
 =\delta_i(l|t+1)$.
Assumption~\ref{ass:shift_margin_condition} is equivalent to
$x_i^{\nom,\star}(l+1|t)+\B_{\delta_i(l|t+1)}\subseteq\X_i$; consequently
\[
 \tilde x_i^{\nom}(l|t+1)+e_i
 =x_i^{\nom,\star}(l+1|t)+(d_i+e_i)\in\X_i.
\]
Since $e_i$ was arbitrary,
$\tilde x_i^{\nom}(l|t+1)\in\X_i\ominus\B_{\tilde r_i(l|t+1)}$ for every
$l=1,\ldots,M-1$.

By feasibility of Problem~\ref{prob:one_step_completion_problem}, choose
$\tilde u_i(M-1|t+1)\in\U_i$ satisfying \eqref{eq:one_step_completion_problem}.  Appending this
input to the shifted prefix supplies the terminal inequality of the new OCP.  Thus every constraint in
\eqref{eq:OCP2} is satisfied at time $t+1$.
\end{proof}

Repeated application gives the following consequence.
\begin{corollary}[Recursive feasibility and constraint satisfaction]
\label{cor:true_constraint_satisfaction}
Under Theorem~\ref{thm:online_tube_rec_feas}, if $x(0)\in\X$, then
$x(t)\in\X$ and $u(t)\in\U$ for all $t\in\Zp$.
\end{corollary}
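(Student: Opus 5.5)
Under the hypotheses of Theorem~\ref{thm:online_tube_rec_feas} at every transition, the OCPs are feasible for all $t\in\Zp$, so a minimizer exists at each $t$. The argument is an induction on $t$ that uses only two facts: the first nominal successor of each local OCP coincides with the actual next state, and the OCP tightens constraints at prediction index $l=1$.

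\textbf{Input constraints.} These are immediate. For every $t$ and $i$, the applied input is $u_i(t)=u_i^\star(0|t)$. This value satisfies the constraint $u_i(0|t)\in\U_i$ of \eqref{eq:OCP2} at time $t$. Taking the product over $i$ gives $u(t)\in\U$.

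\textbf{State constraints.} We argue by induction, with base case $x(0)\in\X$ by hypothesis. Suppose $x(t)\in\X$ and the local OCPs are feasible at time $t$.
\begin{enumerate}[(i)]
\item By Assumption~\ref{ass:packets} and \eqref{eq:packet_shift_x}, the received neighbor packets satisfy $\bar x_{\Ni}(0|t)=x_{\Ni}(t)$.
\item The actual dynamics \eqref{eq:network_decoupled} and the nominal dynamics \eqref{eq:nom-sys} at $l=0$ therefore have identical arguments. Hence
\[
x_i(t+1)=g_i\bigl(x_i(t),x_{\Ni}(t),u_i^\star(0|t)\bigr)=x_i^{\nom,\star}(1|t).
\]
This is the identity already observed in the proof of Theorem~\ref{thm:online_tube_rec_feas}.
\item Feasibility at time $t$ includes the constraint $x_i^{\nom,\star}(1|t)\in\X_i\ominus\B_{r_i^\star(1|t)}$.
\item Since $r_i^\star(1|t)\geq0$, we have $0\in\B_{r_i^\star(1|t)}$. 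Therefore $\X_i\ominus\B_{r_i^\star(1|t)}\subseteq\X_i$, which gives $x_i(t+1)\in\X_i$ for every $i$, i.e.\ $x(t+1)\in\X$.
\end{enumerate}
Theorem~\ref{thm:online_tube_rec_feas} then supplies feasibility at $t+1$, which closes the induction.

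\textbf{Main obstacle: the case $M\geq2$.} The state constraint at index $l=1$ is present in the OCP only because $M\geq2$, which is exactly the standing requirement in Problem~\ref{prob:OCP-2}. I will state this dependence explicitly.

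\textbf{Where the argument is not needed.} No Lipschitz bound or packet-error estimate enters this step. The prediction mismatch affects only indices $l\geq1$ of the neighbor packets, and those indices are used solely in the feasibility argument, which Theorem~\ref{thm:online_tube_rec_feas} already covers.
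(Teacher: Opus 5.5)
Your proof is correct and follows the paper's own route. You combine recursive feasibility from Theorem~\ref{thm:online_tube_rec_feas}, the identity $x_i(t+1)=x_i^{\nom,\star}(1|t)$ (which follows from the exact packet head $\bar x_{\Ni}(0|t)=x_{\Ni}(t)$ and the applied first input), the inclusion $\X_i\ominus\B_{r_i^\star(1|t)}\subseteq\X_i$ for the stage-$1$ constraint, and $u_i(t)=u_i^\star(0|t)\in\U_i$; this is the argument the paper writes out in the proof of Lemma~\ref{lem:initial_intermediate_K_bound}. One minor quibble: feasibility alone does not imply that a minimizer exists (the paper assumes this as a standing convention), but your argument only needs the implemented solution to be feasible.
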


\subsection{Stability guarantees}
\label{subsec:online_tube_stability}
We now quantify the difference between the actual repeatedly reoptimized trajectory and the
full nominal prediction computed at a fixed time $t$.  Define 
 $p_i(l|t) :=\|x_{\Ni}(t+l)-\bar x_{\Ni}(l|t)\|_{\Ni,\infty}$, $\delta_i^u(l|t) :=\|u_i^\star(0|t+l)-u_i^\star(l|t)\|$, for $l=0,\ldots,M-1$.
 Both quantities are measurable retrospectively from stored packets,
state measurements, and optimizer outputs.  In particular, $p_i(0|t)=0$ and
$\delta_i^u(0|t)=0$.
The portion of the actual mismatch not accounted for by the forcing term \eqref{eq:radius_budget} is
\begin{equation}\label{eq:eta_def}
 \eta_i(l|t)
 :=\max\Bigl\{ L_{i,2}p_i(l|t)+L_{i,3}\delta_i^u(l|t)-\mathcal B_i(l|t),0\Bigr\}.
\end{equation}
Define the additional analytical mismatch bound, which is not an OCP variable, by
\begin{equation}
\label{eq:s_correction_recursion} 
 s_i(l+1|t) :=L_{i,1}s_i(l|t)+\eta_i(l|t),
\end{equation}
with $s_i(0|t):=0$, $l=0,\ldots,M-1$.

\begin{lemma}
\label{lem:corrected_actual_nominal_bound}
Let the dynamics $g_i$ of $\Sigma_i$ satisfy the Lipschitz continuity~\eqref{eq:Lipschitz_dynamics_p}.
For every feasible closed-loop execution and
all $l=0,\ldots,M$,
\begin{equation}\label{eq:corrected_actual_nominal_bound}
 \|x_i(t+l)-x_i^{\nom,\star}(l|t)\|
 \leq r_i^\star(l|t)+s_i(l|t).
\end{equation}
For $l=M$, $r_i^\star(M|t)$ is the auxiliary value from
\eqref{eq:terminal_radius_aux}.
\end{lemma}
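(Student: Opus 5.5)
We argue by induction on the prediction index $l$, with $t$ fixed. Set $e_i(l):=x_i(t+l)-x_i^{\nom,\star}(l|t)$. We aim to show the scalar recursion
\[
\|e_i(l+1)\|\leq L_{i,1}\|e_i(l)\|+L_{i,2}\,p_i(l|t)+L_{i,3}\,\delta_i^u(l|t),
\qquad l=0,\ldots,M-1,
\]
and then compare it with the recursions defining $r_i^\star$ and $s_i$.

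\emph{Base case.} For $l=0$ we have $x_i^{\nom,\star}(0|t)=x_i(t)$, so $e_i(0)=0=r_i^\star(0|t)+s_i(0|t)$.

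\emph{One-step estimate.} The actual state evolves as
\[
x_i(t+l+1)=g_i\bigl(x_i(t+l),x_{\Ni}(t+l),u_i(t+l)\bigr),
\]
where the applied input is $u_i(t+l)=u_i^\star(0|t+l)$ by \eqref{eq:online_tube_law}. The nominal prediction uses the arguments $\bigl(x_i^{\nom,\star}(l|t),\bar x_{\Ni}(l|t),u_i^\star(l|t)\bigr)$. Feasibility of the closed-loop execution (Theorem~\ref{thm:online_tube_rec_feas} and Corollary~\ref{cor:true_constraint_satisfaction}) places both argument triples in the constraint domain on which \eqref{eq:Lipschitz_dynamics_p} holds. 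Applying \eqref{eq:Lipschitz_dynamics_p} then gives the recursion above directly, using the definitions of $p_i(l|t)$ and $\delta_i^u(l|t)$.

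\emph{Splitting the forcing term.} By the definition \eqref{eq:eta_def} of $\eta_i$,
\[
L_{i,2}p_i(l|t)+L_{i,3}\delta_i^u(l|t)\leq\mathcal B_i(l|t)+\eta_i(l|t),
\]
since $a\leq b+\max\{a-b,0\}$ for all reals $a,b$.

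\emph{Inductive step.} Assume $\|e_i(l)\|\leq r_i^\star(l|t)+s_i(l|t)$. Then
\[
\|e_i(l+1)\|\leq\bigl[L_{i,1}r_i^\star(l|t)+\mathcal B_i(l|t)\bigr]+\bigl[L_{i,1}s_i(l|t)+\eta_i(l|t)\bigr].
\]
The second bracket equals $s_i(l+1|t)$ by \eqref{eq:s_correction_recursion}. For the first bracket:
\begin{itemize}
\item For $l\leq M-2$, it is at most $r_i^\star(l+1|t)$ by the radius propagation constraint of \eqref{eq:OCP2}. This constraint is an inequality, so the optimized radius may exceed the recursion value, but that only helps.
\item For $l=M-1$, it equals $r_i^\star(M|t)$ by \eqref{eq:terminal_radius_aux}.
\end{itemize}
This covers every $l=0,\ldots,M$, including the auxiliary terminal index.

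\emph{Main obstacle.} The only delicate point is justifying the Lipschitz step, i.e., that both argument triples lie in the domain where \eqref{eq:Lipschitz_dynamics_p} is assumed. The actual neighbor states lie in $\X_{\Ni}$ by constraint satisfaction. The packet entries $\bar x_j(l|t)$ are nominal states of feasible OCPs, so they lie in tightened subsets of $\X_j$. The nominal local state lies in $\X_i\ominus\B_{r_i^\star(l|t)}\subseteq\X_i$. I will argue this via the phrase ``every feasible closed-loop execution'' in the statement. A minor further point is that $l=M-1$ uses $\bar x_{\Ni}(M-1|t)$ and the designer term $d_i^{\mathrm{ter}}$; this is harmless, because any shortfall there is absorbed by $\eta_i(M-1|t)$.
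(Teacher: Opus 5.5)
Your proposal is correct and follows the paper's proof essentially step by step. It uses the same one-step Lipschitz recursion with forcing term $L_{i,2}p_i(l|t)+L_{i,3}\delta_i^u(l|t)$, the same split $a\leq\mathcal B_i(l|t)+\eta_i(l|t)$ obtained from the maximum in \eqref{eq:eta_def}, and the same induction, closed by the radius inequality for $l\leq M-2$ and by \eqref{eq:terminal_radius_aux} for $l=M-1$.

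One small correction concerns your domain remark, a point the paper does not discuss. The last packet entry $\bar x_j(M-1|t)=x_j^{\nom,\star}(M|t-1)$ is not covered by any state constraint of \eqref{eq:OCP2}; only the terminal $W_j$ inequality applies to it. Its membership in the Lipschitz domain is therefore an implicit standing assumption, since the OCP itself already evaluates $g_i$ there, rather than a consequence of feasibility.
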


\begin{proof}
Fix $t$ and $i$, and define $e_i(l|t):=\|x_i(t+l)-x_i^{\nom,\star}(l|t)\|$, $l=0,\ldots,M$.
At prediction step $l\leq M-1$, the actual and time-$t$ nominal successors are, respectively,
 $x_i(t+l+1)
 =g_i\bigl(x_i(t+l),x_{\Ni}(t+l),u_i^\star(0|t+l)\bigr)$, $x_i^{\nom,\star}(l+1|t)
 =g_i\bigl(x_i^{\nom,\star}(l|t),\bar x_{\Ni}(l|t),u_i^\star(l|t)\bigr)$.
Applying \eqref{eq:Lipschitz_dynamics_p} to these two arguments gives
\begin{equation}\label{eq:e_error_recursion}
 e_i(l+1|t)
 \leq L_{i,1}e_i(l|t)+L_{i,2}p_i(l|t)+L_{i,3}\delta_i^u(l|t).
\end{equation}
To relate the last two terms to the radius recursion, set $a_i(l|t):=L_{i,2}p_i(l|t)+L_{i,3}\delta_i^u(l|t)\geq 0$.
By \eqref{eq:eta_def},
$\eta_i(l|t)=\max\{a_i(l|t)-\mathcal B_i(l|t),0\}$.  Therefore
\begin{equation}\label{eq:eta_dominates_excess}
 a_i(l|t)\leq\mathcal B_i(l|t)+\eta_i(l|t).
\end{equation}
Indeed, if $a_i\leq\mathcal B_i$, then \eqref{eq:eta_dominates_excess} follows from
$\eta_i=0$; if $a_i>\mathcal B_i$, then
$\eta_i=a_i-\mathcal B_i$ and equality holds.  This is the precise role of the maximum in
\eqref{eq:eta_def}.

At $l=0$, the measured initial state is also the nominal initial state, so
$e_i(0|t)=0=r_i^\star(0|t)+s_i(0|t)$.  Assume for some
$l\in\{0,\ldots,M-1\}$ that
$e_i(l|t)\leq r_i^\star(l|t)+s_i(l|t)$.  Combining
\eqref{eq:e_error_recursion} and \eqref{eq:eta_dominates_excess} yields
$e_i(l+1|t)
\leq L_{i,1}\bigl(r_i^\star(l|t)+s_i(l|t)\bigr)
      +\mathcal B_i(l|t)+\eta_i(l|t)
=\bigl(L_{i,1}r_i^\star(l|t)+\mathcal B_i(l|t)\bigr)
   +s_i(l+1|t)$.
For $l=0,\ldots,M-2$, the OCP radius inequality gives
$L_{i,1}r_i^\star(l|t)+\mathcal B_i(l|t)\leq r_i^\star(l+1|t)$.
For $l=M-1$, the same expression equals $r_i^\star(M|t)$ by
\eqref{eq:terminal_radius_aux}.  Hence in both cases
$e_i(l+1|t)\leq r_i^\star(l+1|t)+s_i(l+1|t)$.
Induction over $l=0,\ldots,M-1$ proves \eqref{eq:corrected_actual_nominal_bound} for the entire
finite-step interval, including its terminal point.
\end{proof}

\begin{proposition}[$M$-step practical Lyapunov estimate under prediction and reoptimization mismatch]
\label{prop:prac_asyp_like_decay}
Suppose Assumption~\ref{ass:clf-small-gain} holds, the local dynamics satisfy
\eqref{eq:Lipschitz_dynamics_p}, and the OCPs are recursively feasible.  Let
$\mathcal Z\subset\R^n$ be a compact set containing the actual closed-loop states $x(t)$ and all
nominal prediction vectors $x^{\nom,\star}(l|t)$, $l=0,\ldots,M$, for the times under consideration.  Define
\begin{equation}\label{eq:rho_terminal_total}
 \rho(t):=\max_{i=1,\ldots,\ell}
 \bigl(r_i^\star(M|t)+s_i(M|t)\bigr).
\end{equation}
Then the closed-loop satisfies the constraints of Corollary~\ref{cor:true_constraint_satisfaction}.
Moreover, suitable small-gain scaling functions $\sigma_i\in\Kinf$ define
\begin{equation}\label{eq:Wconstruction}
 V(x):=\max_{i=1,\ldots,\ell}\sigma_i^{-1}(W_i(x_i)),
\end{equation}
for which there exist $\ul\alpha_V,\ol\alpha_V,\omega_V\in\Kinf$ satisfying
\begin{align}
\label{eq:V-bounds}
 \ul\alpha_V(\|x\|)&\leq V(x)\leq\ol\alpha_V(\|x\|),\\
\label{eq:V_modulus}
 |V(z)-V(z')|&\leq\omega_V(\|z-z'\|),\qquad z,z'\in\mathcal Z.
\end{align}
In addition, there exist $\widehat\alpha,\widehat\gamma\in\Kinf$, with
$\widehat\alpha<\id$, such that
\begin{equation}\label{eq:pract_asym_sta_like}
 V(x(t+M))
 \leq\max\bigl\{\widehat\alpha(V(x(t))),\widehat\gamma(\rho(t))\bigr\},
 \qquad t\in\Zp.
\end{equation}
Thus \eqref{eq:V_modulus} is a consequence of the continuity of the local functions and compactness
of the operating set; it is not an independent assumption on a function introduced later.
\end{proposition}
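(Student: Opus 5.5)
The argument splits into four parts: constraint satisfaction, the construction of $V$ with its bounds, a modulus of continuity on $\mathcal Z$, and the $M$-step estimate, which compares the actual state at time $t+M$ with the time-$t$ nominal terminal state.

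\emph{Constraints.} Constraint satisfaction follows directly from recursive feasibility and Corollary~\ref{cor:true_constraint_satisfaction}, so nothing new is needed there.

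\emph{Construction of $V$.} Under Assumption~\ref{ass:clf-small-gain}(iii), the cyclic small-gain condition~\eqref{eq:SGC} gives scaling functions $\sigma_i\in\Kinf$ by \cite[Theorem~IV.1]{Geiselhart.2015}, the same ones used in the proof of Proposition~\ref{prop:SG-NGGRW}. They satisfy
\[
\max_j\sigma_i^{-1}\circ\gamma_{ij}\circ\sigma_j<\id ,
\]
and the left-hand side is bounded by some $\widehat\alpha\in\Kinf$ with $\widehat\alpha<\id$; if needed, I would take $\widehat\alpha$ as the maximum over $i,j$, possibly enlarged slightly to make it of class $\Kinf$ while staying below $\id$. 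The bounds \eqref{eq:V-bounds} then follow from \eqref{eq:Wi-bounds} and the norm equivalence $\max_i\|x_i\|\le\|x\|\le\sqrt\ell\max_i\|x_i\|$. Concretely, I would take
\[
\ul\alpha_V(s)=\min_i\sigma_i^{-1}\circ\ul\alpha_i(s/\sqrt\ell),\qquad \ol\alpha_V(s)=\max_i\sigma_i^{-1}\circ\ol\alpha_i(s).
\]

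\emph{Modulus of continuity.} For \eqref{eq:V_modulus}, $V$ is continuous because the $W_i$ and $\sigma_i^{-1}$ are continuous, so $V$ is uniformly continuous on the compact set $\mathcal Z$. Its modulus of continuity $\omega(s)=\sup\{|V(z)-V(z')|:z,z'\in\mathcal Z,\ \|z-z'\|\le s\}$ is nondecreasing, vanishes at $0$, and tends to $0$ as $s\to0$. I would majorize it by a $\Kinf$ function in the standard way, for instance by averaging over intervals and adding $s$.

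\emph{Key estimate.} Fix $t$. By Lemma~\ref{lem:corrected_actual_nominal_bound} with $l=M$, $\|x_i(t+M)-x_i^{\nom,\star}(M|t)\|\le r_i^\star(M|t)+s_i(M|t)\le\rho(t)$ for every $i$. Hence $\|x(t+M)-x^{\nom,\star}(M|t)\|\le\sqrt\ell\,\rho(t)$, and \eqref{eq:V_modulus} gives
\[
V(x(t+M))\le V(x^{\nom,\star}(M|t))+\omega_V(\sqrt\ell\rho(t)).
\]
For the nominal terminal state, the enforced terminal constraint of \eqref{eq:OCP2} gives $W_i(x_i^{\nom,\star}(M|t))\le\max_j\gamma_{ij}(W_j(x_j(t)))$. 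Using $W_j(x_j(t))\le\sigma_j(V(x(t)))$, this yields
\[
\sigma_i^{-1}\bigl(W_i(x_i^{\nom,\star}(M|t))\bigr)\le\max_j\sigma_i^{-1}\gamma_{ij}\sigma_j(V(x(t)))\le\widehat\alpha(V(x(t))),
\]
so $V(x^{\nom,\star}(M|t))\le\widehat\alpha(V(x(t)))$. This step uses only the OCP terminal inequality, which is why the small-gain scalings, rather than the nominal feedback $q$, carry the contraction.

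\emph{Main obstacle.} The remaining step is converting the sum into the max form \eqref{eq:pract_asym_sta_like} while keeping the decay strictly below $\id$. I would use the weak triangle inequality $a+b\le\max\{(\id+\varphi)(a),(\id+\varphi^{-1})(b)\}$ with $\varphi\in\Kinf$ chosen small enough that $(\id+\varphi)\circ\widehat\alpha<\id$. One such choice is $\varphi=\tfrac12(\widehat\alpha^{-1}-\id)$ restricted to where this is positive, adjusted to be of class $\Kinf$; the existence of such a $\varphi$ is exactly where the strictness $\widehat\alpha<\id$ is used. Renaming $(\id+\varphi)\circ\widehat\alpha$ as the new $\widehat\alpha$ and setting $\widehat\gamma:=(\id+\varphi^{-1})\circ\omega_V(\sqrt\ell\,\cdot)$ gives \eqref{eq:pract_asym_sta_like}.
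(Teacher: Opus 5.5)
Up to the additive estimate $V(x(t+M))\le\alpha_{\mathrm{sg}}(V(x(t)))+\omega_V(\sqrt\ell\,\rho(t))$, your argument is the paper's. Here $\alpha_{\mathrm{sg}}:=\max_{i,j}\sigma_i^{-1}\circ\gamma_{ij}\circ\sigma_j$ is your first $\widehat\alpha$. You use the same $V$, the same comparison functions, the same averaged-plus-$s$ majorant of the modulus, and the same use of the OCP terminal inequality. No enlargement of $\alpha_{\mathrm{sg}}$ is needed, since a finite maximum of $\Kinf$ functions is already in $\Kinf$.

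\textbf{The gap is in your final step.} You assert that $\alpha_{\mathrm{sg}}<\id$ alone yields $\varphi\in\Kinf$ with $(\id+\varphi)\circ\alpha_{\mathrm{sg}}<\id$ on all of $\Rp$. That means $\varphi(r)<\alpha_{\mathrm{sg}}^{-1}(r)-r$ for every $r>0$. The right-hand side is positive, but it need not be monotone and may tend to $0$ at infinity.

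For example, take $\alpha_{\mathrm{sg}}(s)=s-s/(1+s^2)$, which is of class $\Kinf$ and below $\id$. Then $\alpha_{\mathrm{sg}}^{-1}(r)-r=s/(1+s^2)$ with $s=\alpha_{\mathrm{sg}}^{-1}(r)$, and this tends to $0$ as $r\to\infty$. So no function of class $\K$ lies below it, and your ``adjustment to class $\Kinf$'' cannot be carried out.

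This is not just a defect of your particular $\varphi$. For that gain and fixed $b:=\omega_V(\sqrt\ell\,\rho)>0$, one has $\alpha_{\mathrm{sg}}(v)+b>v$ for all large $v$. Hence no $\widehat\alpha<\id$ and $\widehat\gamma$ can satisfy $\alpha_{\mathrm{sg}}(v)+b\le\max\{\widehat\alpha(v),\widehat\gamma(\rho)\}$ for all $v\ge 0$; just take $v>\widehat\gamma(\rho)$ large. This is the familiar gap between $\gamma<\id$ and the strengthened condition $(\id+\varrho)\circ\gamma<\id$.

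\textbf{The missing idea is a second use of compactness of $\mathcal Z$,} which you invoked only for the modulus.
\begin{enumerate}
\item Since $x(t)\in\mathcal Z$, you have $V(x(t))\le\bar V:=\max_{z\in\mathcal Z}V(z)$. So the conversion is only needed on a bounded range.
\item On the compact interval $[0,\alpha_{\mathrm{sg}}(\bar V)]$, the continuous function $r\mapsto\alpha_{\mathrm{sg}}^{-1}(r)-r$ is positive for $r>0$. Hence some $\varrho\in\Kinf$ can be chosen below it there, giving $(\id+\varrho)\circ\alpha_{\mathrm{sg}}(s)<s$ for $s\in(0,\bar V]$.
\item The paper sets $\widehat\alpha:=(\id+\varrho)\circ\alpha_{\mathrm{sg}}$ on $[0,\bar V]$. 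It extends it linearly by $\widehat\alpha(s):=c+\theta(s-\bar V)$ for $s>\bar V$, where $c:=(\id+\varrho)(\alpha_{\mathrm{sg}}(\bar V))<\bar V$ and $\theta\in(0,1)$. This $\widehat\alpha$ is of class $\Kinf$ and below $\id$ on all of $\Rp$.
\item The weak triangle inequality with $\widehat\gamma:=(\id+\varrho^{-1})\circ\omega_V\circ(\sqrt\ell\,\id)$ then yields \eqref{eq:pract_asym_sta_like}, precisely because $V(x(t))$ never leaves $[0,\bar V]$.
\end{enumerate}
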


\begin{proof}
The cyclic small-gain condition gives scaling functions $\sigma_i\in\Kinf$ such that
\[
 \max_{j\in\mathcal G_i^\gamma}
 \sigma_i^{-1}\circ\gamma_{ij}\circ\sigma_j<\id,
 \qquad i=1,\ldots,\ell;
\]
see, e.g., \cite[Theorem~5.5]{Ruffer.2010}.
Define
$ \alpha_{\mathrm{sg}}
 :=\max_{\substack{i=1,\ldots,\ell\\j\in\mathcal G_i^\gamma}}
 \sigma_i^{-1}\circ\gamma_{ij}\circ\sigma_j$.
Then $\alpha_{\mathrm{sg}}\in\Kinf$ and $\alpha_{\mathrm{sg}}<\id$.

We first establish the comparison bounds and continuity of $V$.  Since every $W_i$ is continuous by
Assumption~\ref{ass:clf-small-gain}, each map $x_i\mapsto\sigma_i^{-1}(W_i(x_i))$ is continuous;
the maximum of finitely many continuous functions is therefore continuous.  From
\eqref{eq:Wi-bounds}, define
\begin{align*}
 \ul\alpha_V(s)&:=\min_{i=1,\ldots,\ell}
 \sigma_i^{-1}\!\left(\ul\alpha_i\!\left(\frac{s}{\sqrt\ell}\right)\right),\\
 \ol\alpha_V(s)&:=\max_{i=1,\ldots,\ell}
 \sigma_i^{-1}\bigl(\ol\alpha_i(s)\bigr).
\end{align*}
Both functions belong to $\Kinf$.  Since
$\max_i\|x_i\|\geq\|x\|/\sqrt\ell$ and $\|x_i\|\leq\|x\|$, these definitions give
\eqref{eq:V-bounds}.

We next prove \eqref{eq:V_modulus}.
For $s\geq0$ set
$\nu(s):=\max\bigl\{|V(z)-V(z')|:\ z,z'\in\mathcal Z,\ \|z-z'\|\leq s\bigr\}$.
The maximum exists because the admissible pair set is compact.  The function $\nu$ is finite and
nondecreasing, $\nu(0)=0$, and uniform continuity of $V$ on the compact set $\mathcal Z$ implies
$\nu(s)\to0$ as $s\downarrow0$.  Define
\begin{equation}\label{eq:omegaV_construction}
 \omega_V(s):=s+\int_1^2\nu(\tau s)\,d\tau,
 \qquad s\geq0.
\end{equation}
Because $\nu$ is nondecreasing, the integral term is nondecreasing.  Its continuity in $s$ follows from dominated convergence (a monotone function has at most countably many discontinuities), and the additional term $s$ makes $\omega_V$ strictly increasing and unbounded.  Hence
$\omega_V\in\Kinf$.  Moreover, $\nu(\tau s)\geq\nu(s)$ for $\tau\in[1,2]$, so
$\omega_V(s)\geq\nu(s)$, which proves \eqref{eq:V_modulus}.

It remains to establish the finite-step estimate.  From \eqref{eq:Wconstruction},
$W_j(x_j(t))\leq\sigma_j(V(x(t)))$.  Therefore the local terminal inequalities give
\begin{align}
\label{eq:nominal_terminal_decay}
 V(x^{\nom,\star}(M|t))
 &\leq\max_i\max_{j\in\mathcal G_i^\gamma}
 \sigma_i^{-1}\!\left(\gamma_{ij}(W_j(x_j(t)))\right)\nonumber\\
 &\leq\alpha_{\mathrm{sg}}(V(x(t))).
\end{align}
By Lemma~\ref{lem:corrected_actual_nominal_bound} and the definition of $\rho$,
 $\|x(t+M)-x^{\nom,\star}(M|t)\|
 \leq\left(\sum_{i=1}^{\ell}(r_i^\star(M|t)+s_i(M|t))^2\right)^{1/2}
 \leq\sqrt\ell\,\rho(t)$.
Using \eqref{eq:V_modulus} and \eqref{eq:nominal_terminal_decay},
\begin{equation}\label{eq:additive_finite_step_estimate}
 V(x(t+M))
 \leq\alpha_{\mathrm{sg}}(V(x(t)))
 +\omega_V(\sqrt\ell\,\rho(t)).
\end{equation}

Let $\bar V:=\max_{z\in\mathcal Z}V(z)$.  Since $\alpha_{\mathrm{sg}}(s)<s$ for $s>0$, one may
choose $\varrho\in\Kinf$ sufficiently small on the compact interval
$[0,\alpha_{\mathrm{sg}}(\bar V)]$ so that
\begin{equation}\label{eq:rho_weak_triangle_choice}
 (\id+\varrho)\circ\alpha_{\mathrm{sg}}(s)<s,
 \qquad s\in(0,\bar V].
\end{equation}
Put $c:=(\id+\varrho)\circ\alpha_{\mathrm{sg}}(\bar V)<\bar V$ and choose any
$\theta\in(0,1)$.  Define $\widehat\alpha=(\id+\varrho)\circ\alpha_{\mathrm{sg}}$ on
$[0,\bar V]$ and, for $s>\bar V$, set
$\widehat\alpha(s):=c+\theta(s-\bar V)$.  This gives
$\widehat\alpha\in\Kinf$ and $\widehat\alpha<\id$ on $\Rp$.  For $a,b\geq0$,
$a+b\leq\max\{(\id+\varrho)(a),(\id+\varrho^{-1})(b)\}$.
Applying this inequality to \eqref{eq:additive_finite_step_estimate} and defining
$\widehat\gamma:=(\id+\varrho^{-1})\circ\omega_V\circ(\sqrt\ell\,\id)$
yields \eqref{eq:pract_asym_sta_like}.
\end{proof}

\begin{lemma}[Constraint satisfaction and practical bound at intermediate time instants]
\label{lem:initial_intermediate_K_bound}
Let the hypotheses of Proposition~\ref{prop:prac_asyp_like_decay} hold, and let
$q:\X\to\U^M$ be the admissible finite-step feedback from
Assumption~\ref{ass:clf-small-gain}.  Fix any $\tau\in\Zp$ and put $\xi_\tau:=x(\tau)$.
Let $J_i^\star(\tau)$ denote the optimal value of the $i$th local OCP
\eqref{eq:OCP2} at time $\tau$.  Let $\kappa_0^q:=\id$ and let
$\kappa_l^q\in\K$, $l=1,\ldots,M$, be the comparison functions supplied by
admissibility of $q$, i.e.,
$\|x(l,\xi,u_q)\|\leq\kappa_l^q(\|\xi\|)$, $l=0,\ldots,M$.
Define
\begin{equation}\label{eq:psi_q_intermediate}
 \psi_i(s):=\sum_{l=0}^{M-1}\ol\alpha_i(\kappa_l^q(s)),
 \qquad i=1,\ldots,\ell,
\end{equation}
and the nonnegative excess of the local optimal cost over this admissible-feedback
benchmark by
\begin{equation}\label{eq:deltaJ_intermediate}
 \delta_{J,i}(\tau)
 :=\max\bigl\{J_i^\star(\tau)-\psi_i(\|\xi_\tau\|),0\bigr\}.
\end{equation}
Furthermore, define the actual-to-time-$\tau$ nominal mismatch over the intermediate
indices of this finite-step interval by
\begin{equation}\label{eq:epsilon_intermediate}
 \varepsilon(\tau)
 :=\max_{r=0,\ldots,M-1}
 \left(
 \sum_{i=1}^{\ell}
 \bigl(r_i^\star(r|\tau)+s_i(r|\tau)\bigr)^2
 \right)^{1/2},
\end{equation}
and set
\begin{align}
\label{eq:DeltaJ_intermediate}
 \Delta_J(\tau)
 &:=\max_{i=1,\ldots,\ell}
 \sigma_i^{-1}\!\bigl(2\delta_{J,i}(\tau)\bigr),\\
\label{eq:Delta_intermediate}
 \Delta(\tau)
 &:=\Delta_J(\tau)+\omega_V(\varepsilon(\tau)).
\end{align}
Then there exists $\kappa_{\mathrm{int}}\in\Kinf$, independent of $\tau$, such that
for every $r=0,\ldots,M-1$,
\begin{align}
\label{eq:intermediate_state_constraint}
 x_i(\tau+r)&\in\X_i,\qquad i=1,\ldots,\ell,\\
\label{eq:initial_intermediate_K_bound}
 V(x(\tau+r))
 &\leq\kappa_{\mathrm{int}}(V(x(\tau)))+\Delta(\tau).
\end{align}
\end{lemma}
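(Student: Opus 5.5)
The constraint statement \eqref{eq:intermediate_state_constraint} is immediate. Under the hypotheses of Proposition~\ref{prop:prac_asyp_like_decay} the OCPs are recursively feasible, so Corollary~\ref{cor:true_constraint_satisfaction} gives $x(\tau+r)\in\X$ for every $r$. The work lies in \eqref{eq:initial_intermediate_K_bound}, which I would prove in three steps: bound the nominal prediction via the cost, transfer the bound to $V$, and then pass from the nominal to the actual state with the mismatch lemma.

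\emph{Step 1 (cost bound on nominal states).} For each $i$ and $r\le M-1$, every stage term is nonnegative, as is the radius penalty. Hence
\[
W_i\bigl(x_i^{\nom,\star}(r|\tau)\bigr)\le J_i^\star(\tau)\le \psi_i(\|\xi_\tau\|)+\delta_{J,i}(\tau),
\]
where the second inequality is just the definition \eqref{eq:deltaJ_intermediate}. Note that this does not require $q$ to be feasible for the local OCP: the benchmark $\psi_i$ enters only through the definition of the excess $\delta_{J,i}$. This is exactly why $\delta_{J,i}$ was introduced, and I expect it to sidestep what would otherwise be the main obstacle, namely showing that the neighbor-packet-driven OCP admits the $q$-induced input as a candidate. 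Next I use the weak triangle bound $\sigma_i^{-1}(a+b)\le\max\{\sigma_i^{-1}(2a),\sigma_i^{-1}(2b)\}\le \sigma_i^{-1}(2a)+\sigma_i^{-1}(2b)$. Taking the maximum over $i$ gives
\[
V\bigl(x^{\nom,\star}(r|\tau)\bigr)\le \max_i\sigma_i^{-1}\bigl(2\psi_i(\|\xi_\tau\|)\bigr)+\Delta_J(\tau).
\]

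\emph{Step 2 (express in $V(x(\tau))$).} By \eqref{eq:V-bounds}, $\|\xi_\tau\|\le\ul\alpha_V^{-1}(V(x(\tau)))$. I would then set
\[
\kappa_{\mathrm{int}}:=\max_i\sigma_i^{-1}\circ(2\psi_i)\circ\ul\alpha_V^{-1}+\id .
\]
Here $\psi_i\in\K$ because it is a finite sum of compositions of $\ol\alpha_i\in\Kinf$ with $\kappa_l^q\in\K$. Adding $\id$ makes $\kappa_{\mathrm{int}}$ unbounded, so $\kappa_{\mathrm{int}}\in\Kinf$, and it is independent of $\tau$. The added $\id$ also covers $r=0$ trivially.

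\emph{Step 3 (actual versus nominal).} By Lemma~\ref{lem:corrected_actual_nominal_bound},
\[
\|x(\tau+r)-x^{\nom,\star}(r|\tau)\|\le\Bigl(\sum_i\bigl(r_i^\star(r|\tau)+s_i(r|\tau)\bigr)^2\Bigr)^{1/2}\le\varepsilon(\tau).
\]
Both points lie in $\mathcal Z$, so \eqref{eq:V_modulus} gives
\[
V(x(\tau+r))\le V\bigl(x^{\nom,\star}(r|\tau)\bigr)+\omega_V(\varepsilon(\tau)).
\]
Combining this with Steps~1--2 yields \eqref{eq:initial_intermediate_K_bound} with $\Delta(\tau)=\Delta_J(\tau)+\omega_V(\varepsilon(\tau))$.

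The only delicate points are bookkeeping ones. First, $\mathcal Z$ must contain the nominal predictions, which it does by the hypothesis of Proposition~\ref{prop:prac_asyp_like_decay}. Second, the factor $2$ in $\Delta_J$ must be matched by the splitting in Step~1, which it is.
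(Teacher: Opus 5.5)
Your proposal is correct and follows the paper's proof essentially step for step. It uses the cost bound $W_i(x_i^{\nom,\star}(r|\tau))\le J_i^\star(\tau)\le\psi_i(\|\xi_\tau\|)+\delta_{J,i}(\tau)$, the factor-$2$ split under $\sigma_i^{-1}$, the lower comparison bound to pass to $V(x(\tau))$, and Lemma~\ref{lem:corrected_actual_nominal_bound} with \eqref{eq:V_modulus} to pass from nominal to actual states. The differences are cosmetic: the paper proves \eqref{eq:intermediate_state_constraint} directly from $x_i(\tau+r)=x_i^{\nom,\star}(1|\tau+r-1)$ and the stage-$1$ constraint, using only recursive feasibility rather than citing the Corollary, and your extra $+\id$ in $\kappa_{\mathrm{int}}$ is unnecessary because $\kappa_0^q=\id$ already gives $\psi_i\in\Kinf$.
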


\begin{proof}
We first prove \eqref{eq:intermediate_state_constraint}.  For $r=0$ the assertion
follows from $\xi_\tau=x(\tau)\in\X$, which is guaranteed by recursive feasibility.
For $r\geq1$, recursive feasibility gives a feasible local OCP at time
$\tau+r-1$.  By Assumption~\ref{ass:packets},
$\bar x_{\Ni}(0|\tau+r-1)=x_{\Ni}(\tau+r-1)$, and by
\eqref{eq:online_tube_law} the input applied to the plant is
$u_i^\star(0|\tau+r-1)$.  Hence the actual successor and the first nominal
successor coincide:
$x_i(\tau+r)
 =
 g_i\!\left(
 x_i(\tau+r-1),x_{\Ni}(\tau+r-1),
 u_i^\star(0|\tau+r-1)
 \right)
 =x_i^{\nom,\star}(1|\tau+r-1)$.
The stage-$1$ state constraint of \eqref{eq:OCP2} gives
$ x_i^{\nom,\star}(1|\tau+r-1)
 \in \X_i\ominus\B_{r_i^\star(1|\tau+r-1)}
 \subseteq\X_i$,
and therefore $x_i(\tau+r)\in\X_i$.
This argument applies successively to
all intermediate physical time instants.  Notice that this constraint-satisfaction
argument uses recursive feasibility and the exact first-step initialization of each
receding-horizon OCP; it does not require the additional analytical mismatch bound
$s_i$ to be included in the OCP tightening.

We next prove \eqref{eq:initial_intermediate_K_bound}.  Since every term in the
local OCP objective is nonnegative, for every $r=0,\ldots,M-1$,
\begin{equation}\label{eq:Wi_nominal_by_Jstar}
 W_i(x_i^{\nom,\star}(r|\tau))\leq J_i^\star(\tau).
\end{equation}
By \eqref{eq:deltaJ_intermediate}, we get $J_i^\star(\tau)
 \leq \psi_i(\|\xi_\tau\|)+\delta_{J,i}(\tau)$.
Using $a+b\leq\max\{2a,2b\}$ for $a,b\geq0$ and monotonicity of
$\sigma_i^{-1}$, \eqref{eq:Wi_nominal_by_Jstar} implies
$\sigma_i^{-1}\!\left(W_i(x_i^{\nom,\star}(r|\tau))\right)
 \leq
 \max\left\{
 \sigma_i^{-1}\!\left(2\psi_i(\|\xi_\tau\|)\right),
 \sigma_i^{-1}\!\left(2\delta_{J,i}(\tau)\right)
 \right\}$.
From the lower comparison bound in \eqref{eq:V-bounds},
$\|\xi_\tau\|\leq\ul\alpha_V^{-1}(V(\xi_\tau))$.  Thus, with
\begin{equation}\label{eq:kappa_int_construction}
 \kappa_{\mathrm{int}}(s)
 :=
 \max_{i=1,\ldots,\ell}
 \sigma_i^{-1}\!\left(
 2\psi_i\bigl(\ul\alpha_V^{-1}(s)\bigr)
 \right),
 \qquad s\geq0,
\end{equation}
we obtain
\begin{equation}\label{eq:nominal_intermediate_practical_bound}
 V(x^{\nom,\star}(r|\tau))
 \leq
 \max\left\{
 \kappa_{\mathrm{int}}(V(x(\tau))),
 \Delta_J(\tau)
 \right\}.
\end{equation}
The term with $l=0$ in \eqref{eq:psi_q_intermediate} is
$\ol\alpha_i(s)$; consequently each $\psi_i\in\Kinf$, and
\eqref{eq:kappa_int_construction} gives
$\kappa_{\mathrm{int}}\in\Kinf$.

By Lemma~\ref{lem:corrected_actual_nominal_bound},
\[
 \|x(\tau+r)-x^{\nom,\star}(r|\tau)\|
 \leq
 \left(
 \sum_{i=1}^{\ell}
 \bigl(r_i^\star(r|\tau)+s_i(r|\tau)\bigr)^2
 \right)^{1/2}
 \leq\varepsilon(\tau).
\]
Both vectors belong to the compact set $\mathcal Z$ specified in
Proposition~\ref{prop:prac_asyp_like_decay}; hence
\eqref{eq:V_modulus} gives
$ V(x(\tau+r))
 \leq
 V(x^{\nom,\star}(r|\tau))
 +\omega_V(\varepsilon(\tau))$.
Combining this inequality with
\eqref{eq:nominal_intermediate_practical_bound} and using
$\max\{a,b\}\leq a+b$ yields
 $V(x(\tau+r))
 \leq
 \kappa_{\mathrm{int}}(V(x(\tau)))
 +\Delta_J(\tau)+\omega_V(\varepsilon(\tau))$,
which is exactly \eqref{eq:initial_intermediate_K_bound} with
\eqref{eq:Delta_intermediate}.

The offset $\Delta(\tau)$ has two distinct sources.  The term
$\Delta_J(\tau)$ measures the finite excess of the actual local OCP optimal values
over the admissible-feedback comparison bound, and therefore includes the effect of
nonzero tightening radii and packet-dependent feasibility restrictions on the local
optimization.  The term $\omega_V(\varepsilon(\tau))$ is the Lyapunov-function
effect of the actual-to-nominal mismatch quantified by
Lemma~\ref{lem:corrected_actual_nominal_bound}.  Any additional uncertainty not
already represented in that actual-to-nominal error bound would require a
corresponding additional term in Lemma~\ref{lem:corrected_actual_nominal_bound};
no claim is made here for arbitrary unmodelled disturbances.
\end{proof}

\begin{remark}
If, on a prescribed set of initial conditions or operating points, one has verified
uniform bounds
$\delta_{J,i}(\tau)\leq\bar\delta_{J,i}$,
$\varepsilon(\tau)\leq\bar\varepsilon$,
then \eqref{eq:initial_intermediate_K_bound} holds uniformly with the constant
$ \bar\Delta
 :=
 \max_i\sigma_i^{-1}(2\bar\delta_{J,i})
 +\omega_V(\bar\varepsilon)$.
Thus a strictly positive offset is permitted.  Such an offset gives a practical
intermediate-time estimate.  To recover Lyapunov stability of the origin, the
offset must additionally vanish with the initial state, as made precise in
Proposition~\ref{prop:practical_form_rigorous}.
\end{remark}

\begin{proposition}[Practical boundedness, convergence, and asymptotic stability]
\label{prop:practical_form_rigorous}
Let the hypotheses of Proposition~\ref{prop:prac_asyp_like_decay} hold.  For each intermediate index
$r\in\{0,\ldots,M-1\}$ define
$Y_r(k):=V(x(r+kM))$ and $R_r(k):=\rho(r+kM)$.  Then, for every $k\in\Zp$,
\begin{equation}\label{eq:intermediate_iteration_exact}
 Y_r(k)
 \leq\max\left\{
 \widehat\alpha^{(k)}(Y_r(0)),
 \max_{q=0,\ldots,k-1}
 \widehat\alpha^{(k-1-q)}\circ\widehat\gamma(R_r(q))
 \right\},
\end{equation}
where the second maximum is zero for $k=0$.  Let
$\Delta_0:=\Delta(0)$ be the offset from
Lemma~\ref{lem:initial_intermediate_K_bound}.  Then
\begin{equation}\label{eq:intermediate_iteration_with_offset}
 Y_r(k)
 \leq\max\left\{
 \widehat\alpha^{(k)}
 \!\left(\kappa_{\mathrm{int}}(V(x(0)))+\Delta_0\right),
 \max_{q=0,\ldots,k-1}
 \widehat\alpha^{(k-1-q)}\circ\widehat\gamma(R_r(q))
 \right\}.
\end{equation}
Consequently:
\begin{enumerate}
\item[(i)] If $\rho(t)\leq\bar\rho$ for all $t$, then
\begin{equation}\label{eq:practical_limsup}
 \limsup_{t\to\infty}V(x(t))\leq\widehat\gamma(\bar\rho),
 \qquad
 \limsup_{t\to\infty}\|x(t)\|
 \leq\ul\alpha_V^{-1}(\widehat\gamma(\bar\rho)).
\end{equation}
The finite offset $\Delta_0$ affects only the transient term in
\eqref{eq:intermediate_iteration_with_offset} and does not enlarge this asymptotic bound.

\item[(ii)] If $\rho(t)\to0$, then $V(x(t))\to0$ and $x(t)\to0$.

\item[(iii)] If $\rho(t)=0$ for all $t$, then for every
$t=kM+r$, $r\in\{0,\ldots,M-1\}$,
\begin{equation}\label{eq:full_time_V_bound_offset_case}
 V(x(t))
 \leq
 \widehat\alpha^{(k)}
 \!\left(\kappa_{\mathrm{int}}(V(x(0)))+\Delta_0\right).
\end{equation}
Hence $x(t)\to0$.  However, if $\Delta_0>0$ is a fixed offset independent of
$x(0)$, \eqref{eq:full_time_V_bound_offset_case} by itself does not establish
Lyapunov stability of the origin.

\item[(iv)] Suppose $\rho(t)=0$ for all $t$ and, on the considered domain,
there exists $\delta_\Delta\in\Kinf$ such that
\begin{equation}\label{eq:Delta_vanishing_condition}
 \Delta_0\leq\delta_\Delta(V(x(0))).
\end{equation}
Then the origin is asymptotically stable in $\X$ and admits a $\KL$ estimate.
The exact-consistency case $\Delta_0=0$ is included as a special case.
\end{enumerate}
\end{proposition}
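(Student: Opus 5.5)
The argument iterates the one-block estimate \eqref{eq:pract_asym_sta_like} of Proposition~\ref{prop:prac_asyp_like_decay} along each residue class $t\equiv r \pmod M$ and then reads off (i)--(iv).

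\textbf{Step 1: the iterated bound.} Fix $r\in\{0,\ldots,M-1\}$. Applying \eqref{eq:pract_asym_sta_like} at $t=r+kM$ gives
\[
Y_r(k+1)\leq\max\{\widehat\alpha(Y_r(k)),\widehat\gamma(R_r(k))\}.
\]
I prove \eqref{eq:intermediate_iteration_exact} by induction on $k$; the case $k=0$ is trivial. For the inductive step, insert the hypothesis for $Y_r(k)$ into $\widehat\alpha$. Since $\widehat\alpha\in\Kinf$ is monotone, it commutes with $\max$, which raises every composition exponent by one. The new term $\widehat\gamma(R_r(k))$ appears with exponent $0$.

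\textbf{Step 2: the bound with offset.} To get \eqref{eq:intermediate_iteration_with_offset}, bound $Y_r(0)=V(x(r))$ by Lemma~\ref{lem:initial_intermediate_K_bound} with $\tau=0$. This gives $Y_r(0)\leq\kappa_{\mathrm{int}}(V(x(0)))+\Delta_0$, and monotonicity of $\widehat\alpha^{(k)}$ finishes the step.

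\textbf{Step 3: the four consequences.}

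For (i), note that $\widehat\alpha<\id$, so for any fixed $s$ the sequence $\widehat\alpha^{(k)}(s)$ is decreasing. Its limit $L$ satisfies $\widehat\alpha(L)=L$ by continuity, hence $L=0$. The transient term therefore vanishes. Each remaining term $\widehat\alpha^{(j)}(\widehat\gamma(R_r(q)))$ is at most $\widehat\gamma(\bar\rho)$, since $\widehat\alpha^{(j)}\leq\id$. This gives the limsup bound for every residue class, hence for all $t$. The norm bound then follows from \eqref{eq:V-bounds}.

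For (ii), this is the main obstacle, because the inner maximum mixes old large terms with recent small ones. I use a standard ISS-type splitting. Given $\epsilon>0$, choose $q_0$ with $\widehat\gamma(R_r(q))<\epsilon$ for all $q\geq q_0$. Then restart the recursion at $k=q_0$, i.e. apply Step 1 with $Y_r(q_0)$ as the initial value; this is legitimate since only the one-step inequality is used. This yields
\[
Y_r(k)\leq\max\{\widehat\alpha^{(k-q_0)}(Y_r(q_0)),\epsilon\}.
\]
Here $Y_r(q_0)$ is finite. The first term tends to $0$ as in (i), so $\limsup_k Y_r(k)\leq\epsilon$. Letting $\epsilon\downarrow0$ gives $Y_r(k)\to0$. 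Doing this over the finitely many $r$ gives $V(x(t))\to0$, and $x(t)\to0$ follows from $\ul\alpha_V$.

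For (iii), when $\rho\equiv0$ the inner maximum vanishes because $\widehat\gamma(0)=0$. Writing $t=kM+r$, \eqref{eq:full_time_V_bound_offset_case} is then exactly \eqref{eq:intermediate_iteration_with_offset}, and convergence follows as in (i). The caveat about Lyapunov stability is a remark: a bound whose constant part does not shrink with $x(0)$ cannot give $\epsilon$--$\delta$ stability, and no proof is needed beyond noting this.

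For (iv), substitute $\Delta_0\leq\delta_\Delta(V(x(0)))$ into \eqref{eq:full_time_V_bound_offset_case} and use $V(x(0))\leq\ol\alpha_V(\|x(0)\|)$. This gives
\[
V(x(t))\leq\widehat\alpha^{(\lfloor t/M\rfloor)}\bigl(\kappa(\|x(0)\|)\bigr),
\qquad
\kappa:=(\kappa_{\mathrm{int}}+\delta_\Delta)\circ\ol\alpha_V\in\Kinf .
\]
Define
\[
\beta(s,t):=\ul\alpha_V^{-1}\Bigl(\widehat\alpha^{(\lfloor t/M\rfloor)}\bigl(\kappa(s)\bigr)\Bigr).
\]
This is of class $\K$ in $s$ and nonincreasing in $t$, tending to $0$. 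It can be upper-bounded by a genuine $\KL$ function, for instance via linear interpolation in $t$ between the block times. This gives the $\KL$ estimate and hence asymptotic stability, including the case $\Delta_0=0$.
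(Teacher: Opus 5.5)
Your proposal is correct and follows the paper's proof essentially step for step. It uses the same one-block recursion $Y_r(k+1)\leq\max\{\widehat\alpha(Y_r(k)),\widehat\gamma(R_r(k))\}$ iterated by induction, the same Lemma~\ref{lem:initial_intermediate_K_bound} at $\tau=0$ for the offset version, the same restart at a late index for (ii), and the same $\kappa_{\mathrm{int}}+\delta_\Delta$ bound with interpolation for (iv); your fixed-point argument for $\widehat\alpha^{(k)}(s)\to0$ supplies a step the paper only asserts.

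One detail in (iv) needs tightening. If you interpolate the values $\widehat\alpha^{(k)}(\kappa(s))$ placed at $t=kM$, the result lies below your step bound $\widehat\alpha^{(\lfloor t/M\rfloor)}(\kappa(s))$ inside each block. You therefore need to shift the interpolant by one block, as the paper does via $\lfloor t/M\rfloor\geq\max\{t/M-1,0\}$, to get a $\KL$ function that actually dominates the step bound.
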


\begin{proof}
Evaluating \eqref{eq:pract_asym_sta_like} at $t=r+kM$ gives
\[
 Y_r(k+1)\leq\max\{\widehat\alpha(Y_r(k)),\widehat\gamma(R_r(k))\}.
\]
Induction yields \eqref{eq:intermediate_iteration_exact}.  Lemma~
\ref{lem:initial_intermediate_K_bound}, evaluated at $\tau=0$, gives
$Y_r(0)=V(x(r))
 \leq\kappa_{\mathrm{int}}(V(x(0)))+\Delta_0$, $r=0,\ldots,M-1$.
Since $\widehat\alpha$ is increasing, substitution of this bound into
\eqref{eq:intermediate_iteration_exact} proves
\eqref{eq:intermediate_iteration_with_offset}.

Because $\widehat\alpha<\id$, every positive iterate of
$\widehat\alpha$ is no larger than its argument and
$\widehat\alpha^{(k)}(s)\to0$ for every fixed $s\geq0$.
If $R_r(k)\leq\bar\rho$, then
\eqref{eq:intermediate_iteration_exact} gives
$\limsup_{k\to\infty}Y_r(k)\leq\widehat\gamma(\bar\rho)$.
There are only $M$ interlaced subsequences
$\{r+kM\}_{k\geq0}$, so \eqref{eq:practical_limsup} follows from
\eqref{eq:V-bounds}.  The first term in
\eqref{eq:intermediate_iteration_with_offset} converges to zero for every finite
$\Delta_0$, which proves the last statement in item~(i).

Now suppose $R_r(k)\to0$.  Given $\varepsilon>0$, choose $N$ such that
$\widehat\gamma(R_r(k))\leq\varepsilon$ for $k\geq N$.
Iteration from $N$ yields
$ Y_r(N+m)\leq\max\{\widehat\alpha^{(m)}(Y_r(N)),\varepsilon\}$.
The first term converges to zero, hence $Y_r(k)\to0$.  

Repeating the argument for
$r=0,\ldots,M-1$ proves (ii).

If $\rho\equiv0$, the second term in
\eqref{eq:intermediate_iteration_with_offset} vanishes and
\eqref{eq:full_time_V_bound_offset_case} follows immediately.  Since the argument of
$\widehat\alpha^{(k)}$ is finite and
$\widehat\alpha^{(k)}(s)\to0$, item~(iii) follows.
A fixed positive $\Delta_0$, however, prevents the right-hand side at the first $M-1$ time instants
from tending to zero with $V(x(0))$; therefore this estimate alone is insufficient
for Lyapunov stability.

For (iv), define $\widetilde\kappa_{\mathrm{int}}
 :=\kappa_{\mathrm{int}}+\delta_\Delta\in\Kinf$.
By \eqref{eq:Delta_vanishing_condition},
\eqref{eq:full_time_V_bound_offset_case} yields
 $V(x(t))
 \leq
 \widehat\alpha^{(k)}
 \circ\widetilde\kappa_{\mathrm{int}}(V(x(0)))$ for $t=kM+r$.
For integer $k\geq0$, set
$\zeta(s,k):=\widehat\alpha^{(k)}
(\widetilde\kappa_{\mathrm{int}}(s))$ and interpolate linearly with respect to
$k$ on every interval $[k,k+1]$.  Since
$\widehat\alpha\in\Kinf$ and $\widehat\alpha<\id$, the resulting function
$\bar\beta_V\in\KL$.  With
$k=\lfloor t/M\rfloor\geq\max\{t/M-1,0\}$ and monotonicity of
$\bar\beta_V$ in its second argument,
$
 V(x(t))
 \leq
 \bar\beta_V\!\left(
 V(x(0)),\max\{t/M-1,0\}
 \right).
$
Using both inequalities in \eqref{eq:V-bounds} gives
$
 \|x(t)\|
 \leq
 \ul\alpha_V^{-1}\!\left(
 \bar\beta_V\!\left(
 \ol\alpha_V(\|x(0)\|),\max\{t/M-1,0\}
 \right)\right),
$
which is a $\KL$ estimate.  This proves asymptotic stability at all physical
time instants.
\end{proof}

\subsection{Radius and mismatch bounds}

\begin{lemma}[Minimal optimized radii and auxiliary terminal radius]
\label{lem:min_tube_equality_rigorous}
For fixed packet data, suppose an optimizer of \eqref{eq:OCP2} exists.  Then an optimal solution
can be selected such that
\begin{align}
\label{eq:tube_eq_rigorous}
 r_i^\star(0|t)&=0,\nonumber\\
 r_i^\star(l+1|t)&=L_{i,1}r_i^\star(l|t)+\mathcal B_i(l|t),
 \quad l=0,\ldots,M-2,
\end{align}
and $r_i^\star(M|t)$ is given by \eqref{eq:terminal_radius_aux}.
\end{lemma}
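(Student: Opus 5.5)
The plan is an exchange argument: take an arbitrary optimizer $({\bf u}_{M,i}^\star(t),{\bf r}_{M-1,i}^\star(t))$ of \eqref{eq:OCP2}, keep its inputs, replace its radii by the smallest radii allowed by the propagation constraint, and show that the modified pair is still feasible and no more costly. The fact that makes this work is that, for fixed packet data, the forcing term $\mathcal B_i(l|t)$ in \eqref{eq:radius_budget} does not depend on the decision variables. It depends only on the received radii $\bar r_j(l|t)$, through \eqref{eq:radius_budget}, and on the packet variation $d_i(l|t)$ from \eqref{eq:di_def}--\eqref{eq:terminal_packet_budget}. In addition $\mathcal B_i(l|t)\geq 0$.

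\textbf{Step 1 (construction).} Define $\hat r_i(0|t):=0$ and $\hat r_i(l+1|t):=L_{i,1}\hat r_i(l|t)+\mathcal B_i(l|t)$ for $l=0,\ldots,M-2$. These radii are nonnegative because $L_{i,1}>0$ and $\mathcal B_i\geq0$, and they satisfy the propagation constraint with equality.

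\textbf{Step 2 (domination).} Show by induction on $l$ that $\hat r_i(l|t)\leq r_i^\star(l|t)$ for $l=0,\ldots,M-1$.
\begin{itemize}
\item[] Base case: both radii vanish at $l=0$.
\item[] Inductive step: if $\hat r_i(l|t)\leq r_i^\star(l|t)$, then the feasibility of $r^\star$ and $L_{i,1}>0$ give
\[
r_i^\star(l+1|t)\geq L_{i,1}r_i^\star(l|t)+\mathcal B_i(l|t)\geq L_{i,1}\hat r_i(l|t)+\mathcal B_i(l|t)=\hat r_i(l+1|t).
\]
\end{itemize}

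\textbf{Step 3 (feasibility).} The inputs are unchanged, so the nominal trajectory $x_i^{\nom,\star}(\cdot|t)$ from \eqref{eq:nom-sys} is unchanged. Hence the input constraints and the terminal inequality $W_i(x_i^{\nom,\star}(M|t))\leq b_i(t)$ still hold. For the tightened state constraints, $\hat r\leq r^\star$ gives $\B_{\hat r_i(l|t)}\subseteq\B_{r_i^\star(l|t)}$. By the definition of the Pontryagin difference this yields
\[
\X_i\ominus\B_{r_i^\star(l|t)}\subseteq\X_i\ominus\B_{\hat r_i(l|t)},
\]
so each state constraint with index $l=1,\ldots,M-1$ remains satisfied. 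Nonnegativity and propagation hold by Step 1.

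\textbf{Step 4 (optimality).} The stage terms $W_i(x_i^{\nom,\star}(l|t))$ are identical for the two solutions. Since $\lambda>0$, we have $\lambda\sum_{l}\hat r_i(l|t)\leq\lambda\sum_l r_i^\star(l|t)$. So the modified pair is feasible with cost at most the optimal cost, and it is therefore itself an optimizer; this is the selection claimed in \eqref{eq:tube_eq_rigorous}.

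\textbf{Strengthening and terminal radius.} The argument in fact gives more: if any propagation inequality were strict for $r^\star$, the induction shows $\hat r_i(l|t)<r_i^\star(l|t)$ at the first index where strictness occurs. Then the cost is strictly smaller, which is a contradiction. Hence every optimizer satisfies \eqref{eq:tube_eq_rigorous}, and I will note this. Finally, $r_i^\star(M|t)$ is not a decision variable: it is defined by \eqref{eq:terminal_radius_aux} applied to the selected $r_i^\star(M-1|t)$, so the last claim holds by definition.

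\textbf{Main obstacle.} The only step needing care is verifying that $\mathcal B_i(l|t)$ is independent of the local decision variables, so that the recursion is well defined before optimizing and monotonicity can be applied. This is immediate from \eqref{eq:radius_budget}, \eqref{eq:di_def} and \eqref{eq:packet_shift_r}, since these involve only received packet entries, and the terminal term $d_i^{\mathrm{ter}}(t)$ is designer-fixed.
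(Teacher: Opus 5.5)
Your proposal is correct and follows the same exchange argument as the paper. You replace the radii by the minimal recursion, use $L_{i,1}\geq0$ for componentwise domination, use the monotonicity of $\X_i\ominus\B_{r}$ in $r$ for feasibility, and use $\lambda>0$ for the cost comparison, which, in your version and in the paper's final step alike, shows that every optimizer is tight. Your explicit induction, and your check that $\mathcal B_i(l|t)\geq0$ and does not depend on the decision variables, fill in details the paper leaves implicit; the first of these is needed for the constraints $r_i(l|t)\geq0$.
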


\begin{proof}
Starting from any feasible radius vector, recursively replace every optimized radius by the
smallest value allowed by its propagation inequality.  Because $L_{i,1}\geq0$, the resulting
sequence is componentwise no larger.  Decreasing a radius enlarges
$\X_i\ominus\B_{r_i(l|t)}$, so all state constraints remain feasible, and all radius inequalities
remain satisfied.  If any optimized inequality is strict, at least one penalized radius decreases
strictly, reducing the objective because $\lambda>0$.  Thus an optimal tight-radius selection
exists.  The terminal value is fixed by definition and is not optimized.
\end{proof}

\begin{lemma}[Explicit tightening-radius and mismatch bounds]
\label{lem:ri_bound_monotone_rigorous}
For a tight-radius optimizer and $l=1,\ldots,M$,
\begin{equation}\label{eq:ri_explicit_rigorous}
 r_i^\star(l|t)
 =\sum_{k=0}^{l-1}L_{i,1}^{l-1-k}\mathcal B_i(k|t).
\end{equation}
Moreover,
\begin{equation}\label{eq:s_explicit_rigorous}
 s_i(l|t)
 =\sum_{k=0}^{l-1}L_{i,1}^{l-1-k}\eta_i(k|t).
\end{equation}
Consequently,
\begin{align}
\label{eq:ri_uniform_bound_rigorous}
 \max_{l=0,\ldots,M}r_i^\star(l|t)
 &\leq\left(\sum_{p=0}^{M-1}L_{i,1}^{p}\right)
       \max_{k=0,\ldots,M-1}\mathcal B_i(k|t),\\
\label{eq:si_uniform_bound_rigorous}
 \max_{l=0,\ldots,M}s_i(l|t)
 &\leq\left(\sum_{p=0}^{M-1}L_{i,1}^{p}\right)
       \max_{k=0,\ldots,M-1}\eta_i(k|t).
\end{align}
Both recursions are order preserving in their nonnegative driving signals.
\end{lemma}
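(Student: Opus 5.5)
The plan is to unroll the two scalar linear recursions by induction on $l$ and then bound the resulting geometric sums. First, for the radii: by Lemma~\ref{lem:min_tube_equality_rigorous} a tight-radius optimizer satisfies $r_i^\star(0|t)=0$ and $r_i^\star(l+1|t)=L_{i,1}r_i^\star(l|t)+\mathcal B_i(l|t)$ for $l=0,\ldots,M-2$. For $l=M-1$ the same identity holds with the auxiliary value of \eqref{eq:terminal_radius_aux}, so the recursion is valid for all $l=0,\ldots,M-1$. I would then prove \eqref{eq:ri_explicit_rigorous} by induction. At $l=1$ the formula reads $r_i^\star(1|t)=\mathcal B_i(0|t)$, which matches the recursion because $r_i^\star(0|t)=0$. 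For the inductive step,
\[
L_{i,1}\sum_{k=0}^{l-1}L_{i,1}^{l-1-k}\mathcal B_i(k|t)+\mathcal B_i(l|t)=\sum_{k=0}^{l}L_{i,1}^{l-k}\mathcal B_i(k|t).
\]
The identical argument applied to \eqref{eq:s_correction_recursion}, with $s_i(0|t)=0$ and $\eta_i$ in place of $\mathcal B_i$, gives \eqref{eq:s_explicit_rigorous} for $l=1,\ldots,M$.

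For the uniform bounds, I would use that $\mathcal B_i(k|t)\geq0$. This holds because it is built from nonnegative radii $\bar r_j$, the norms in \eqref{eq:di_def}, and $d_i^{\mathrm{ter}}\geq0$. Likewise $\eta_i\geq0$ by the maximum in \eqref{eq:eta_def}, and $L_{i,1}>0$. Then for $1\leq l\leq M$,
\[
r_i^\star(l|t)\leq\Bigl(\sum_{k=0}^{l-1}L_{i,1}^{l-1-k}\Bigr)\max_{k\leq M-1}\mathcal B_i(k|t)=\Bigl(\sum_{p=0}^{l-1}L_{i,1}^{p}\Bigr)\max_{k}\mathcal B_i(k|t).
\]
The geometric partial sum is nondecreasing in $l$ because its terms are nonnegative, so it is bounded by the $l=M$ sum $\sum_{p=0}^{M-1}L_{i,1}^p$. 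The case $l=0$ is trivial since the radius is zero. The same steps give \eqref{eq:si_uniform_bound_rigorous}.

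Order preservation follows from the explicit formulas: each of $r_i^\star(l|t)$ and $s_i(l|t)$ is a linear combination with nonnegative coefficients $L_{i,1}^{l-1-k}$ of its driving signal. Hence componentwise larger nonnegative inputs produce componentwise larger outputs. Equivalently, the recursion maps $(a,b)\mapsto L_{i,1}a+b$ is monotone in both arguments.

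The only point that needs care is that \eqref{eq:ri_explicit_rigorous} is claimed up to $l=M$, where $r_i^\star(M|t)$ is not an OCP variable. I will handle this explicitly by noting that \eqref{eq:terminal_radius_aux} continues the tight recursion one further step. Beyond that, the argument is routine bookkeeping.
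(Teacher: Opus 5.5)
Your proposal is correct and follows essentially the same route as the paper. The paper unrolls the tight radius recursion (extended to $l=M$ via \eqref{eq:terminal_radius_aux}) and \eqref{eq:s_correction_recursion} by induction, bounds each driving term by its horizon maximum, and gets order preservation from the nonnegative powers of $L_{i,1}$. Your explicit check that $\mathcal B_i,\eta_i\geq 0$ fills in a detail the paper leaves implicit; it is needed to enlarge the partial geometric sum to $\sum_{p=0}^{M-1}L_{i,1}^p$ and to cover the case $l=0$.
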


\begin{proof}
Equations \eqref{eq:ri_explicit_rigorous} and \eqref{eq:s_explicit_rigorous} follow by induction
from \eqref{eq:tube_eq_rigorous},~\eqref{eq:terminal_radius_aux}, and
\eqref{eq:s_correction_recursion}.
Bounding every driving term by its horizon maximum gives
\eqref{eq:ri_uniform_bound_rigorous}--\eqref{eq:si_uniform_bound_rigorous}.
Nonnegativity of all powers of $L_{i,1}$ proves monotonicity.
\end{proof}

\begin{corollary}[Vanishing terminal practical term]
\label{cor:ri_vanish_rigorous}
If, for every $i$, $\max_{k=0,\ldots,M-1}\mathcal B_i(k|t)\to 0$,
$\max_{k=0,\ldots,M-1}\eta_i(k|t)\to 0$, then $\rho(t)\to0$ and hence $x(t)\to0$ by
Proposition~\ref{prop:practical_form_rigorous}(ii).
\end{corollary}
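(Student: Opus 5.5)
The plan is to reduce the corollary to Proposition~\ref{prop:practical_form_rigorous}(ii). By definition \eqref{eq:rho_terminal_total}, $\rho(t)$ is a maximum over finitely many indices $i$, so it suffices to show that $r_i^\star(M|t)\to0$ and $s_i(M|t)\to0$ for each fixed $i$.

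First, I will invoke Lemma~\ref{lem:min_tube_equality_rigorous} to select a tight-radius optimizer. This is needed because Lemma~\ref{lem:ri_bound_monotone_rigorous} is stated for such optimizers. With that selection, \eqref{eq:ri_uniform_bound_rigorous} gives
\[
r_i^\star(M|t)\le \Bigl(\sum_{p=0}^{M-1}L_{i,1}^p\Bigr)\max_{k=0,\ldots,M-1}\mathcal B_i(k|t).
\]
The prefactor is a constant independent of $t$, because $M$ and $L_{i,1}$ are fixed. Hence the hypothesis on $\mathcal B_i$ forces $r_i^\star(M|t)\to0$.

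Next, \eqref{eq:si_uniform_bound_rigorous} bounds $s_i(M|t)$ by the same constant times $\max_k\eta_i(k|t)$. This bound does not depend on the tight-radius selection, since the recursion \eqref{eq:s_correction_recursion} is explicit. It therefore tends to zero by the second hypothesis. Summing the two limits and taking the maximum over the finitely many $i$ gives $\rho(t)\to0$.

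The last step applies Proposition~\ref{prop:practical_form_rigorous}(ii) to obtain $V(x(t))\to0$ and $x(t)\to0$. Here the hypotheses of Proposition~\ref{prop:prac_asyp_like_decay} are implicitly assumed, including recursive feasibility and the compact set $\mathcal Z$.

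The main subtlety is the optimizer selection. If the implemented optimizer is not tight, $r_i^\star$ may exceed the minimal recursion. In that case I would either state that the tight selection of Lemma~\ref{lem:min_tube_equality_rigorous} is used in Algorithm~\ref{alg:dist_multi_step_mpc}, or note that it is the unique optimal choice when every optimizer is tight, because $\lambda>0$. There is a further point: $\eta_i$ depends on $\mathcal B_i$ through \eqref{eq:eta_def}, and hence on the selected radii. I would check that this does not matter, since the corollary's hypothesis is placed directly on $\eta_i$ as computed along the executed trajectory.
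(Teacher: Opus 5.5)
Your proposal is correct and follows essentially the argument the paper intends. The paper gives no separate proof of the corollary. It is an immediate consequence of the uniform bounds \eqref{eq:ri_uniform_bound_rigorous}--\eqref{eq:si_uniform_bound_rigorous} at $l=M$, the finite maximum in \eqref{eq:rho_terminal_total}, and Proposition~\ref{prop:practical_form_rigorous}(ii). Your optimizer-selection concern is moot: the radii do not enter the nominal dynamics and $\lambda>0$, so any optimizer with a strict propagation inequality can be strictly improved, and hence every optimizer is tight.
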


\section{Constrained linear networks}\label{sec:Constrained linear networks}

This section gives finite-dimensional certificates, independent of
terminal-set construction, for linear systems and local quadratic finite-step Lyapunov-like functions.  They concern the finite-step feedback control laws used to establish Assumption~\ref{ass:clf-small-gain} and,
when constraints are present, a certified operating domain on which that feedback is admissible.
%\subsection{Linear network, quadratic local functions, and finite-step feedback control laws}
Consider
\begin{equation}\label{eq:lin_network}
 x(t+1)=Ax(t)+Bu(t),\qquad
 x=(x_1,\ldots,x_\ell),\quad u=(u_1,\ldots,u_\ell),
\end{equation}
where $A=[A_{ij}]_{i,j=1}^\ell$ and
$B=\operatorname{blkdiag}(B_1,\ldots,B_\ell)$.  Let
$K=\operatorname{blkdiag}(K_1,\ldots,K_\ell)$ and define
$A_K:=A+BK$.
For an initial condition $\xi$, the associated length-$M$ finite-step feedback sequence is
\begin{equation}\label{eq:q_linear_sequence}
 q(\xi):=\bigl(K\xi,\,KA_K\xi,\,\ldots,\,KA_K^{M-1}\xi\bigr).
\end{equation}
Thus the corresponding state trajectory is $x(l,\xi,q)=A_K^l\xi$, $l=0,\ldots,M$.
Equation~\eqref{eq:q_linear_sequence}, rather than the one-step expression $K\xi$ alone, is the map
$q:\X\to\U^M$ appearing in Assumption~\ref{ass:clf-small-gain}.

Fix $P_i=P_i^\top\succ0$ and define
$W_i(\xi_i):=\xi_i^\top P_i\xi_i$.
Then Assumption~\ref{ass:clf-small-gain}(i) holds with
$\lambda_{\min}(P_i)\|\xi_i\|^2\leq W_i(\xi_i)
 \leq\lambda_{\max}(P_i)\|\xi_i\|^2$.
Write
\begin{equation}\label{eq:Phi_blocks}
 \Phi(M):=A_K^M=[\Phi_{ij}(M)]_{i,j=1}^\ell,
 \qquad
 c_{ij}(M):=\left\|P_i^{1/2}\Phi_{ij}(M)P_j^{-1/2}\right\|_2.
\end{equation}
Here $P_i^{1/2}$ denotes the unique symmetric positive-definite principal square root of $P_i$, and $P_i^{-1/2}:=(P_i^{1/2})^{-1}$ denotes its inverse.  Equivalently, $P_i^{-1/2}$ is the principal square root of $P_i^{-1}$.

\begin{definition}\label{def:cij}
For each $i$, let
$\mathcal J_i:=\{j:c_{ij}(M)>0\}$.  Choose $a_{ij}>0$ for
$j\in\mathcal J_i$ such that
\begin{equation}\label{eq:gain_allocation_condition}
 \sum_{j\in\mathcal J_i}\frac{c_{ij}(M)}{\sqrt{a_{ij}}}\leq1.
\end{equation}
Set $a_{ij}:=0$ when $c_{ij}(M)=0$ and define
\begin{equation}\label{eq:linear_gains}
 \gamma_{ij}(s):=a_{ij}s.
\end{equation}
A universally available choice is
\begin{equation}\label{eq:uniform_gain_choice}
 a_{ij}=d_i^2c_{ij}^2(M),\qquad d_i:=|\mathcal J_i|, j\in\mathcal J_i.
\end{equation}
\end{definition}

\begin{theorem}
\label{thm:ass1ii_linear}
Let \eqref{eq:lin_network}--\eqref{eq:Phi_blocks} hold and let the gains satisfy
\eqref{eq:gain_allocation_condition}.  Then, for every $\xi\in\R^n$,
\begin{equation}\label{eq:linear_max_gain_bound}
 W_i\bigl(x_i(M,\xi,q)\bigr)
 \leq\max_{j=1,\ldots,\ell}\gamma_{ij}\bigl(W_j(\xi_j)\bigr),
 \qquad i=1,\ldots,\ell.
\end{equation}
Hence Assumption~\ref{ass:clf-small-gain}(ii) holds for the finite-step feedback control laws
\eqref{eq:q_linear_sequence}.
\end{theorem}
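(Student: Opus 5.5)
We prove the bound by a direct block computation on the closed-loop map $\Phi(M)=A_K^M$, changing coordinates so that the weighted norms become Euclidean. By \eqref{eq:q_linear_sequence}, the $M$-step state is $x(M,\xi,q)=\Phi(M)\xi$. Hence the $i$th block is
\[
 x_i(M,\xi,q)=\sum_{j=1}^{\ell}\Phi_{ij}(M)\xi_j=\sum_{j\in\mathcal J_i}\Phi_{ij}(M)\xi_j,
\]
where blocks with $c_{ij}(M)=0$ vanish because $P_i^{1/2}$ and $P_j^{-1/2}$ are invertible. We introduce $w_j:=P_j^{1/2}\xi_j$, so that $W_j(\xi_j)=\|w_j\|^2$. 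Likewise $\sqrt{W_i(x_i(M,\xi,q))}=\|P_i^{1/2}x_i(M,\xi,q)\|$.

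Applying the triangle inequality and the definition of $c_{ij}(M)$ in \eqref{eq:Phi_blocks} gives
\[
 \sqrt{W_i(x_i(M,\xi,q))}\leq\sum_{j\in\mathcal J_i}\bigl\|P_i^{1/2}\Phi_{ij}(M)P_j^{-1/2}\bigr\|_2\,\|w_j\|
 =\sum_{j\in\mathcal J_i}c_{ij}(M)\sqrt{W_j(\xi_j)}.
\]
The key step converts this weighted sum into a maximum using the allocation condition \eqref{eq:gain_allocation_condition}. We write each term as
\[
 c_{ij}(M)\sqrt{W_j(\xi_j)}=\frac{c_{ij}(M)}{\sqrt{a_{ij}}}\sqrt{a_{ij}W_j(\xi_j)}.
\]
Each factor $\sqrt{a_{ij}W_j(\xi_j)}$ is bounded by $m_i:=\max_{j\in\mathcal J_i}\sqrt{a_{ij}W_j(\xi_j)}$. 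Summing the coefficients with \eqref{eq:gain_allocation_condition} then gives $\sqrt{W_i(x_i(M,\xi,q))}\leq m_i$. This is a convex-combination-type bound, so the sum is dominated by the maximum.

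Squaring, which is monotone on $\Rp$, yields
\[
 W_i(x_i(M,\xi,q))\leq\max_{j\in\mathcal J_i}a_{ij}W_j(\xi_j)=\max_{j\in\mathcal J_i}\gamma_{ij}(W_j(\xi_j)).
\]
Since $\gamma_{ij}\equiv0$ for $j\notin\mathcal J_i$ and all values are nonnegative, this equals the maximum over all $j$, which is \eqref{eq:linear_max_gain_bound}. If $\mathcal J_i=\varnothing$, both sides are zero, so the bound holds trivially. The gains are linear with $a_{ij}\geq0$, so $\gamma_{ij}\in\Kinf\cup\{0\}$ as Assumption~\ref{ass:clf-small-gain}(ii) requires.

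For the uniform choice \eqref{eq:uniform_gain_choice} we have $c_{ij}/\sqrt{a_{ij}}=1/d_i$, so the sum in \eqref{eq:gain_allocation_condition} equals $1$ and the condition holds. The only step needing care is the sum-to-max conversion; the rest is bookkeeping with the square roots of the $P_i$.
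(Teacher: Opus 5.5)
Your proof is correct and follows the paper's argument essentially step for step. You use the change of variables $P_j^{1/2}\xi_j$, the triangle inequality with the block constants $c_{ij}(M)$, the factorization $\frac{c_{ij}(M)}{\sqrt{a_{ij}}}\sqrt{a_{ij}W_j(\xi_j)}$ bounded by the maximum via \eqref{eq:gain_allocation_condition}, and then squaring. Your extra remarks (blocks with $c_{ij}(M)=0$ vanish, the empty $\mathcal J_i$ case, and $\gamma_{ij}\in\Kinf\cup\{0\}$) fill in details the paper leaves implicit.
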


\begin{proof}
Set $z_j=P_j^{1/2}\xi_j$.  Since
$x_i(M,\xi,q)=\sum_j\Phi_{ij}(M)\xi_j$,
\begin{align*}
 \sqrt{W_i(x_i(M,\xi,q))}
 &\leq\sum_{j\in\mathcal J_i}c_{ij}(M)\|z_j\|_2\\
 &=\sum_{j\in\mathcal J_i}
   \frac{c_{ij}(M)}{\sqrt{a_{ij}}}
   \sqrt{a_{ij}W_j(\xi_j)}\\
 &\leq
 \left(\sum_{j\in\mathcal J_i}\frac{c_{ij}(M)}{\sqrt{a_{ij}}}\right)
 \max_{j\in\mathcal J_i}\sqrt{a_{ij}W_j(\xi_j)}.
\end{align*}
Condition~\eqref{eq:gain_allocation_condition} and squaring give
\eqref{eq:linear_max_gain_bound}.  For \eqref{eq:uniform_gain_choice}, every nonzero summand in
\eqref{eq:gain_allocation_condition} equals $1/d_i$, so the sum equals one.
\end{proof}

%The norm allocation above is explicit.  
The following semidefinite certificate can be less conservative
for a prescribed family of gains.

\begin{proposition}[SDP certificate for prescribed quadratic gains]
\label{prop:gain_sdp_certificate}
Let $\Phi_i(M):=[\Phi_{i1}(M)\ \cdots\ \Phi_{i\ell}(M)]$
be the $i$th block row of $A_K^M$.  Fix $a_{ij}>0$ for every block for which
$\Phi_{ij}(M)\neq0$.  If there exist $\tau_{ij}\geq0$ such that
\begin{align}
\label{eq:gain_sdp_matrix}
 \Phi_i(M)^\top P_i\Phi_i(M)
 &\preceq\operatorname{blkdiag}(\tau_{i1}P_1,\ldots,\tau_{i\ell}P_\ell),\\
\label{eq:gain_sdp_scalar}
 \sum_{j=1}^\ell\frac{\tau_{ij}}{a_{ij}}&\leq1,
\end{align}
then \eqref{eq:linear_max_gain_bound} holds for subsystem $i$.
For fixed $P_i$, $K$, $M$, and $a_{ij}$, conditions
\eqref{eq:gain_sdp_matrix}--\eqref{eq:gain_sdp_scalar} form an SDP feasibility test.
\end{proposition}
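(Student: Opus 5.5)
The plan is to write the quadratic form as a single block-row expression, apply the matrix inequality \eqref{eq:gain_sdp_matrix} to it, and then turn the resulting weighted sum into a maximum using \eqref{eq:gain_sdp_scalar}.

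\emph{Step 1: reduce to a block-row quadratic form.} For any $\xi\in\R^n$, the linear structure gives $x_i(M,\xi,q)=\Phi_i(M)\xi$. Hence
\[
W_i\bigl(x_i(M,\xi,q)\bigr)=\xi^\top\Phi_i(M)^\top P_i\Phi_i(M)\xi .
\]

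\emph{Step 2: apply the LMI.} Pre- and post-multiplying \eqref{eq:gain_sdp_matrix} by $\xi^\top$ and $\xi$ yields
\[
W_i\bigl(x_i(M,\xi,q)\bigr)\leq\sum_{j=1}^\ell\tau_{ij}\,\xi_j^\top P_j\xi_j=\sum_{j=1}^\ell\tau_{ij}W_j(\xi_j).
\]

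\emph{Step 3: convert the weighted sum into a maximum.} For each $j$ with $a_{ij}>0$, write
\[
\tau_{ij}W_j(\xi_j)=\frac{\tau_{ij}}{a_{ij}}\,a_{ij}W_j(\xi_j)\leq\frac{\tau_{ij}}{a_{ij}}\max_k a_{ik}W_k(\xi_k).
\]
Summing over $j$ and using \eqref{eq:gain_sdp_scalar} gives $W_i\leq\max_j\gamma_{ij}(W_j(\xi_j))$ with $\gamma_{ij}(s)=a_{ij}s$. This is \eqref{eq:linear_max_gain_bound}, and it is the same convex-combination-below-maximum step as in the proof of Theorem~\ref{thm:ass1ii_linear}. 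Feasibility-test character follows because, with $P_i,K,M,a_{ij}$ fixed, $\Phi_i(M)$ is a constant matrix, so \eqref{eq:gain_sdp_matrix} is an LMI and \eqref{eq:gain_sdp_scalar} is linear in the decision variables $\tau_{ij}$. The problem is therefore an SDP feasibility problem.

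\emph{Main obstacle: blocks with $\Phi_{ij}(M)=0$.} For these blocks $a_{ij}$ is not prescribed, and the ratio $\tau_{ij}/a_{ij}$ in \eqref{eq:gain_sdp_scalar} needs a meaning. I would adopt the convention that $a_{ij}:=0$ there, as in Definition~\ref{def:cij}, with $\tau_{ij}/a_{ij}$ read as $0$ if $\tau_{ij}=0$ and $+\infty$ otherwise. This forces $\tau_{ij}=0$ for such $j$, so the corresponding terms vanish in Step~2.

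That convention only works if the LMI remains feasible with those $\tau_{ij}$ set to zero. To justify this, I would check that zeroing them is harmless: the $(j,j)$ diagonal block of the left-hand side of \eqref{eq:gain_sdp_matrix} is $\Phi_{ij}^\top P_i\Phi_{ij}=0$, and positive semidefiniteness of the difference matrix then forces the off-diagonal coupling blocks in row and column $j$ to vanish once its diagonal block is zero. Hence the inequality is unaffected by setting $\tau_{ij}=0$ for those $j$, and the remaining terms are all covered by positive $a_{ij}$.
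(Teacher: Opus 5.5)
Your proof is correct and matches the paper's: evaluating the LMI at $\xi$ gives $W_i(x_i(M,\xi,q))\le\sum_j\tau_{ij}W_j(\xi_j)$, each term is then bounded by $(\tau_{ij}/a_{ij})\max_k a_{ik}W_k(\xi_k)$, and $\sum_j\tau_{ij}/a_{ij}\le 1$ finishes the argument. Your handling of blocks with $\Phi_{ij}(M)=0$ is extra care the paper omits, with one small fix: the off-diagonal blocks $\Phi_{ij}(M)^\top P_i\Phi_{ik}(M)$ vanish directly because $\Phi_{ij}(M)=0$ (appealing to positive semidefiniteness of the difference matrix there is circular), and the LMI with $\tau_{ij}=0$ still holds because its quadratic form at $\xi$ equals the original one evaluated at $\xi$ with $\xi_j$ replaced by $0$.
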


\begin{proof}
Let $s:=\max_j a_{ij}W_j(\xi_j)$.  Then
$W_j(\xi_j)\leq s/a_{ij}$.  By \eqref{eq:gain_sdp_matrix}, we get 
$ W_i(x_i(M,\xi,q))
 \leq\sum_j\tau_{ij}W_j(\xi_j)
 \leq s\sum_j\frac{\tau_{ij}}{a_{ij}}\leq s$.
\end{proof}

\subsection{Tractable verification of the cyclic small-gain condition}

\begin{lemma}
%[Equivalent cycle, scaling, and linear-program certificates]
\label{lem:cycle_linear}
Let $A_\gamma:=[a_{ij}]_{i,j=1}^\ell$ be the nonnegative gain matrix associated with
\eqref{eq:linear_gains}. The following statements are equivalent.
\begin{enumerate}
\item Every directed simple cycle $(i_1,\ldots,i_r)$ satisfies
\begin{equation}\label{eq:prod_cycle}
 a_{i_1i_2}a_{i_2i_3}\cdots a_{i_ri_1}<1.
\end{equation}
\item There exist $p_i>0$ and $\theta\in(0,1)$ such that
\begin{equation}\label{eq:scaled_edge_condition}
 a_{ij}p_j\leq\theta p_i
 \qquad\text{for every edge }(j,i)\text{ with }a_{ij}>0.
\end{equation}
\item With variables $y_i=\log p_i$ and $\vartheta=\log\theta<0$, the difference
constraints
\begin{equation}\label{eq:log_scaling_lp}
 \log a_{ij}+y_j-y_i\leq\vartheta
\end{equation}
are feasible for all nonzero edges.
\end{enumerate}
When these conditions hold, the explicit scaling functions
$\sigma_i(s):=p_i s$ satisfy
%\begin{equation}\label{eq:linear_scaled_gain}
$\sigma_i^{-1}\circ\gamma_{ij}\circ\sigma_j(s)
 \leq\theta s$,
%\end{equation}
and therefore
\begin{equation}\label{eq:linear_global_V}
 V(x):=\max_i\frac{x_i^\top P_i x_i}{p_i}
\end{equation}
obeys $V(A_K^M x)\leq\theta V(x)$ under the feedback
\eqref{eq:q_linear_sequence}.
\end{lemma}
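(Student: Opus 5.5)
Our plan is to prove the chain (1)$\Rightarrow$(2)$\Rightarrow$(3)$\Rightarrow$(1), and then derive the scaled-gain and global $V$ statements from (2).

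\emph{(2)$\Leftrightarrow$(3).} This is a change of variables. For $a_{ij}>0$, $p_i>0$ and $\theta>0$, the inequality $a_{ij}p_j\leq\theta p_i$ is equivalent, after taking logarithms, to $\log a_{ij}+y_j-y_i\leq\vartheta$ with $y_i=\log p_i$ and $\vartheta=\log\theta$. Moreover, $\theta\in(0,1)$ holds if and only if $\vartheta<0$.

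\emph{(2)$\Rightarrow$(1).} Take a simple cycle $(i_1,\ldots,i_r)$ all of whose edges are nonzero; if some edge is zero, the product is $0<1$ and there is nothing to show. Multiplying the inequalities $a_{i_ki_{k+1}}p_{i_{k+1}}\leq\theta p_{i_k}$ around the cycle, the positive factors $p$ cancel. This leaves $\prod a\leq\theta^r<1$.

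\emph{(1)$\Rightarrow$(2).} This is the main step, and we would prove it with a Bellman--Ford/potential argument. First choose $\theta\in(0,1)$ with $\theta^r>\prod_{\text{cycle}}a$ for every one of the finitely many simple cycles. This is possible because each product is strictly less than $1$: take $\theta:=\max_{\text{cycles}}(\prod a)^{1/r}$ if that maximum is positive, enlarged slightly while staying below $1$. Next define weights $w_{ij}:=\log a_{ij}-\log\theta$ on nonzero edges. The choice of $\theta$ makes every simple cycle have negative total weight, so every cycle has negative weight, since it decomposes into simple cycles. Now set $y_i:=\max$ of the total weight over all walks ending at $i$, counting the empty walk as weight $0$. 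This maximum is finite because repeating cycles only lowers the weight, so it suffices to consider paths of length at most $\ell-1$. By construction $y_i\geq w_{ij}+y_j$ for every nonzero edge, which is exactly \eqref{eq:log_scaling_lp}. Then $p_i:=e^{y_i}$ gives (2). The delicate point is the edge orientation: the product in \eqref{eq:prod_cycle} uses $a_{i_ki_{k+1}}$, while \eqref{eq:scaled_edge_condition} is indexed by the edge $(j,i)$. The walks must therefore be oriented consistently so that the cancellation matches. Self-loops are handled by the same argument, since they are cycles with $r=1$ and $a_{ii}<1\leq\theta^{-1}\cdot\theta$ is covered by the choice of $\theta$.

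\emph{Consequences.} With $\sigma_i(s)=p_is$ we compute $\sigma_i^{-1}\gamma_{ij}\sigma_j(s)=a_{ij}p_js/p_i\leq\theta s$ directly from (2). For $V$, Theorem~\ref{thm:ass1ii_linear} gives
\[
(A_K^Mx)_i^\top P_i(A_K^Mx)_i\leq\max_j a_{ij}W_j(x_j).
\]
Dividing by $p_i$ and writing $a_{ij}W_j/p_i=(a_{ij}p_j/p_i)(W_j/p_j)\leq\theta V(x)$, then maximizing over $i$, yields $V(A_K^Mx)\leq\theta V(x)$.
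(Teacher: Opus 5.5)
Your proof is correct and follows the paper's route: logarithms turn cycle products into cycle sums, a potential solves the difference constraints with a uniform margin obtained from finiteness, multiplying (2) around a cycle gives the converse, and the scaled-gain and $V$ bounds follow by direct computation. You fill in what the paper only asserts (the explicit choice of $\theta$ and the Bellman--Ford longest-walk potential); the one blemish is your garbled self-loop inequality, where all that is needed is $a_{ii}<\theta$, which your choice of $\theta$ already guarantees.
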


\begin{proof}
For linear gains, Assumption~\ref{ass:clf-small-gain}(iii) is equivalent to \eqref{eq:prod_cycle}.  Taking logarithms converts cycle products
into cycle sums.  The strict negativity of every cycle sum is equivalent to feasibility of the
strict difference constraints \eqref{eq:log_scaling_lp}; a common negative margin can be chosen
because the graph is finite.  Exponentiation gives \eqref{eq:scaled_edge_condition}, and the
reverse implication follows by multiplying \eqref{eq:scaled_edge_condition} around any cycle.
Finally,
$\sigma_i^{-1}\circ\gamma_{ij}\circ\sigma_j(s)
=(a_{ij}p_j/p_i)s\leq\theta s$.  Taking maxima and using
Theorem~\ref{thm:ass1ii_linear} yields the last claim.
\end{proof}

\begin{remark}
%[Computational implementation]
For fixed gains $a_{ij}$, minimizing $\vartheta$ subject to
\eqref{eq:log_scaling_lp} and one normalization such as $y_1=0$ is a linear program.  Its
optimal value is the logarithm of the maximum cycle geometric mean.  Thus the complete cyclic
small-gain condition can be checked without enumerating cycles.  The same LP returns the weights
$p_i$ used in the explicit global function \eqref{eq:linear_global_V}.
\end{remark}

\subsection{Constraint admissibility on a certified operating domain}
Assumption~\ref{ass:clf-small-gain} requires the complete length-$M$ feedback sequence to be admissible.  For a
constrained linear plant this property must be checked in addition to the gain inequalities.
Let
$ \X=\{x:H_xx\leq h_x\}$, $\U=\{u:H_uu\leq h_u\}$, where every component of $h_x,h_u$ is positive, and consider the ellipsoid $\Omega(Q):=\{x:x^\top Q^{-1}x\leq1\}$, $Q\succ0$.

\begin{proposition}[Exact ellipsoidal admissibility certificate]
\label{prop:linear_admissibility_domain}
For fixed $K$ and $M$, suppose
\begin{align}
\label{eq:M_invariance_Q}
 A_K^M Q(A_K^M)^\top&\preceq Q,\\
\label{eq:state_support_Q}
 H_{x,k}A_K^rQ(A_K^r)^\top H_{x,k}^\top&\leq h_{x,k}^2,
 &&r=0,\ldots,M,\\
\label{eq:input_support_Q}
 H_{u,k}KA_K^rQ(A_K^r)^\top K^\top H_{u,k}^\top&\leq h_{u,k}^2,
 &&r=0,\ldots,M-1,
\end{align}
for every row $H_{x,k}$ and $H_{u,k}$.  Then $q$ in
\eqref{eq:q_linear_sequence} is an admissible finite-step feedback control laws on $\Omega(Q)$, and the
block-boundary state $A_K^M x$ remains in $\Omega(Q)$.  Moreover,
\begin{equation}\label{eq:linear_K_bounded_intermediate}
 \|A_K^r x\|\leq\|A_K^r\|_2\|x\|,
 \qquad r=0,\ldots,M,
\end{equation}
verifies the finite-step $\K$-boundedness requirement.
For fixed $K$ and $M$, \eqref{eq:M_invariance_Q}--\eqref{eq:input_support_Q} are LMIs or
scalar linear inequalities in $Q$.
\end{proposition}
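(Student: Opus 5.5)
The proof rests on two standard facts about the ellipsoid $\Omega(Q)$ with $Q\succ0$. First, the support function is $\max_{x\in\Omega(Q)} c^\top x=\sqrt{c^\top Qc}$. Second, linear images satisfy $L\,\Omega(Q)=\{Lx:x^\top Q^{-1}x\leq1\}$, so that $\max_{x\in\Omega(Q)}c^\top Lx=\sqrt{c^\top LQL^\top c}$.

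\textbf{Constraint admissibility.} Fix $x\in\Omega(Q)$ and $r\in\{0,\ldots,M\}$. For each row $k$, the support function gives
\[
H_{x,k}A_K^r x\leq\sqrt{H_{x,k}A_K^rQ(A_K^r)^\top H_{x,k}^\top}\leq h_{x,k}
\]
by \eqref{eq:state_support_Q}, using $h_{x,k}>0$ to take square roots. Hence $x(r,x,q)=A_K^r x\in\X$, and in particular $x\in\X$ at $r=0$. In the same way, \eqref{eq:input_support_Q} gives $q^{[r]}(x)=KA_K^rx\in\U$ for $r=0,\ldots,M-1$. This is item~1 of Definition~\ref{def:admissible_finite_step_feedback}, restricted to $\Omega(Q)$.

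\textbf{Block-boundary invariance.} Put $y=A_K^Mx$ and assume first that $Q\succ0$. Condition \eqref{eq:M_invariance_Q} states $A_K^MQ(A_K^M)^\top\preceq Q$. By the Schur complement, this is equivalent to $(A_K^M)^\top Q^{-1}A_K^M\preceq Q^{-1}$. To see this, write $\tilde A=Q^{-1/2}A_K^MQ^{1/2}$; then the condition reads $\tilde A\tilde A^\top\preceq I$, i.e.\ $\|\tilde A\|_2\leq1$, which is the same as $\tilde A^\top\tilde A\preceq I$. Consequently
\[
y^\top Q^{-1}y\leq x^\top Q^{-1}x\leq1,
\]
so $A_K^Mx\in\Omega(Q)$. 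This equivalence is the only step needing care: one must check that the direction of the inequality is preserved when passing from the primal form to the dual form of the ellipsoid. The argument also shows that the block feedback can be reapplied from the new block-boundary state, which justifies ``admissible on $\Omega(Q)$'' in the sense of \eqref{eq:block_feedback_trajectory}.

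\textbf{$\K$-boundedness and LMI structure.} For item~2 of Definition~\ref{def:admissible_finite_step_feedback}, submultiplicativity gives \eqref{eq:linear_K_bounded_intermediate}. Take $\kappa_r(s):=(\|A_K^r\|_2+\epsilon)s$ with any $\epsilon>0$, so that $\kappa_r\in\K$ even when $A_K^r=0$. Finally, every constraint is affine in $Q$. Condition \eqref{eq:M_invariance_Q} is the LMI $Q-A_K^MQ(A_K^M)^\top\succeq0$. Conditions \eqref{eq:state_support_Q}--\eqref{eq:input_support_Q} are scalar inequalities of the form $c^\top Qc\leq h^2$ with fixed vectors $c$, hence linear in $Q$. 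Together with $Q\succ0$, the whole certificate is an LMI feasibility problem.
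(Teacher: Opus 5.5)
Your proof is correct and follows the same route as the paper. It applies the ellipsoid support function $\max_{x\in\Omega(Q)}c^\top x=\sqrt{c^\top Qc}$ to the rows $H_{x,k}A_K^r$ and $H_{u,k}KA_K^r$, reads \eqref{eq:M_invariance_Q} as $A_K^M\Omega(Q)\subseteq\Omega(Q)$, and uses the operator-norm bound for $\K$-boundedness, while also supplying two details the paper leaves implicit: the equivalence $A_K^MQ(A_K^M)^\top\preceq Q\Leftrightarrow (A_K^M)^\top Q^{-1}A_K^M\preceq Q^{-1}$, and the $\epsilon$-shift that keeps $\kappa_r$ strictly increasing when $A_K^r=0$.
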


\begin{proof}
For an ellipsoid $\Omega(Q)$,
$\max_{x\in\Omega(Q)}c^\top x=\sqrt{c^\top Qc}$.  Applying this support formula to
$c^\top=H_{x,k}A_K^r$ and $c^\top=H_{u,k}KA_K^r$ proves state and input admissibility.
Condition~\eqref{eq:M_invariance_Q} is exactly the ellipsoidal containment
$A_K^M\Omega(Q)\subseteq\Omega(Q)$.  Equation~\eqref{eq:linear_K_bounded_intermediate} is
immediate from linearity.
\end{proof}

\begin{remark}
%[Role of $\Omega(Q)$]
The set $\Omega(Q)$ is an offline certified operating domain for the finite-step feedback used to
verify Assumption~\ref{ass:clf-small-gain}.  It is not imposed as a terminal set in Problem~\ref{prob:OCP-2}.  If
Assumption~\ref{ass:clf-small-gain} is claimed on the full constrained set $\X$, the corresponding support conditions
must be verified on $\X$ itself; Proposition~\ref{prop:linear_admissibility_domain} gives a
tractable inner-domain certificate when a global claim is unnecessary or false.
\end{remark}

%\subsection{Constructive design workflow}
A coherent linear design can be carried out as follows.
\begin{enumerate}
\item Choose a block-diagonal $K$ and a candidate finite-step length $M$.
\item Choose $P_i\succ0$ and compute the blocks of $A_K^M$.
\item Construct gains by \eqref{eq:gain_allocation_condition}, or certify prescribed gains with
Proposition~\ref{prop:gain_sdp_certificate}.
\item Solve the LP \eqref{eq:log_scaling_lp}.  If its optimal $\theta$ is not strictly below one,
increase $M$ or redesign $K$ or $P_i$.
\item If constraints are part of the Assumption~\ref{ass:clf-small-gain} domain, solve
\eqref{eq:M_invariance_Q}--\eqref{eq:input_support_Q} for a nontrivial $Q$.
\end{enumerate}
The gain construction and the cyclic small-gain test concern finite-step decay; the ellipsoidal
support tests concern admissibility.  Neither one substitutes for the other.
For example consider
\begin{equation}\label{eq:linear_two_example}
 x^+=Ax,\qquad
 A=\begin{bmatrix}1.2&-10\\0.05&-1.2\end{bmatrix}.
\end{equation}
Direct multiplication gives $A^2=0.94I_2$.  With $M=2$, $P_1=P_2=1$, and
$W_i(x_i)=x_i^2$, one has $c_{11}=c_{22}=0.94$ and $c_{12}=c_{21}=0$.  The choice
$a_{11}=a_{22}=0.94^2=0.8836$ therefore satisfies
\eqref{eq:gain_allocation_condition}, and every nonzero cycle gain is strictly below one.  At
$M=1$, by contrast, the self-gains induced by the diagonal entries have magnitude larger than
one.  This example illustrates that the certificate genuinely exploits finite-step cancellation.

% -----------------------------------------------------------------------------
%\section{Constrained linear networks: tractable shift, completion, and regularity certificates}
%\label{sec:linear_shift_completion}
% -----------------------------------------------------------------------------

We now specialize the certificates to the
linear-quadratic case, without introducing a terminal set into the OCP.  Write the local dynamics
as
%\begin{equation}\label{eq:local_linear_dynamics}
$x_i^+=A_{ii}x_i+\sum_{j\in\Ni}A_{ij}x_j+B_i u_i
 =A_{ii}x_i+A_{i\Ni}x_{\Ni}+B_i u_i$,
%\end{equation}
and let
\begin{equation}\label{eq:local_poly_constraints}
 \X_i=\{x_i:F_i x_i\leq f_i\},\qquad
 \U_i=\{u_i:G_i u_i\leq g_i\}.
\end{equation}
The local finite-step functions remain $W_i(x_i)=x_i^\top P_i x_i$.
%\subsection{Explicit incremental constants}
The Lipschitz constants as in~\eqref{eq:Lipschitz_dynamics_p} can be computed as
%\begin{equation}\label{eq:linear_L_constants}
$L_{i,1}=\|A_{ii}\|_2,
 L_{i,2}=\sum_{j\in\Ni}\|A_{ij}\|_2,
 L_{i,3}=\|B_i\|_2$.
%\end{equation}
Tighter constants may be obtained by
computing the induced norm of the block row $A_{i\Ni}$ from
$(\prod_{j\in\Ni}\R^{n_j},\|\cdot\|_{\Ni,\infty})$ to $(\R^{n_i},\|\cdot\|_2)$.

\subsection{Exact verification of the shift-compatible margins}
For the polyhedral set \eqref{eq:local_poly_constraints}, Lemma \ref{lem:shift_margin_verification} reduces condition
\eqref{eq:shift_margin_condition} to the finite family
\begin{equation}\label{eq:linear_shift_row_check}
 F_{i,k}x_i^{\nom,\star}(l+1|t)
 +\|F_{i,k}\|_2\bigl(\chi_i(l|t+1)+\tilde r_i(l|t+1)\bigr)
 \leq f_{i,k}
\end{equation}
for every row $k$ and every $l=1,\ldots,M-1$.  All quantities in
\eqref{eq:linear_shift_row_check} are known before the OCP at time $t+1$ is accepted.  Thus
condition~\eqref{eq:shift_margin_condition} is verified by matrix-vector products and scalar
comparisons only.
If an offline envelope is desired, suppose
$x_i^{\nom,\star}(l+1|t)$ lies in
$\mathcal E(c_{i,l},Q_{i,l})=
\{c_{i,l}+Q_{i,l}^{1/2}w:\|w\|_2\leq1\}$ and
$\chi_i(l|t+1)+\tilde r_i(l|t+1)\leq\bar\delta_{i,l}$.  Then the uniform row conditions
%\begin{equation}\label{eq:linear_shift_offline_envelope}
 $F_{i,k}c_{i,l}
 +\sqrt{F_{i,k}Q_{i,l}F_{i,k}^\top}
 +\|F_{i,k}\|_2\bar\delta_{i,l}
 \leq f_{i,k}$
%\end{equation}
certify \eqref{eq:linear_shift_row_check} for every point in the envelope.  These checks are exact
for the stated ellipsoidal envelope and can be imposed as second-order-cone inequalities when an
ellipsoid factor is used as the design variable.

\subsection{Exact one-step terminal feasibility tests}
At time $t+1$, set
 $a_i:=A_{ii}\tilde x_i^{\nom}(M-1|t+1)$, $
 +A_{i\Ni}\bar x_{\Ni}(M-1|t+1)$, $b_i:=b_i(t+1)$.
Problem~\ref{prob:one_step_completion_problem} is equivalent to the second-order-cone feasibility problem
\begin{equation}\label{eq:linear_completion_socp_full}
 \text{find }v_i\quad\text{s.t.}\quad
 G_i v_i\leq g_i,
 \qquad
 \|P_i^{1/2}(a_i+B_i v_i)\|_2\leq\sqrt{b_i}.
\end{equation}
Thus any feasible $v_i$ is an admissible final input for the shifted candidate.  Equivalently, define
\begin{equation}\label{eq:linear_completion_value_qp}
 \Psi_i(a_i):=\min_{G_i v_i\leq g_i}(a_i+B_i v_i)^\top P_i(a_i+B_i v_i).
\end{equation}
Then the one-step terminal condition is feasible if and only if
$\Psi_i(a_i)\leq b_i$.  Problem~\eqref{eq:linear_completion_value_qp} is a convex quadratic
program and \eqref{eq:linear_completion_socp_full} is its SOCP feasibility representation.

\begin{lemma}[Closed-form unconstrained terminal value]
\label{lem:unconstrained_completion}
If $\U_i=\R^{m_i}$, define
$R_i:=B_i^\top P_iB_i$ and let $R_i^\dagger$ denote the Moore--Penrose inverse.  A
minimum-norm minimizer is
$v_i^\star=-R_i^\dagger B_i^\top P_i a_i$, and
\begin{align}
\label{eq:unconstrained_completion_value}
 \Psi_i^{\rm unc}(a_i)
 &=a_i^\top\left(P_i-P_iB_iR_i^\dagger B_i^\top P_i\right)a_i.
\end{align}
Consequently, $\Psi_i^{\rm unc}(a_i)\leq b_i$ is an exact analytical test in the
unconstrained-input case and a necessary lower-bound test when input constraints are present.
\end{lemma}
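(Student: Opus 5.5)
The plan is to treat $\Psi_i^{\rm unc}(a_i)=\min_{v_i\in\R^{m_i}}(a_i+B_iv_i)^\top P_i(a_i+B_iv_i)$ as an unconstrained convex least-squares problem and read off the minimizer from its normal equations. Expanding the objective gives
\[
 f(v_i)=a_i^\top P_ia_i+2a_i^\top P_iB_iv_i+v_i^\top R_iv_i,
\]
with $R_i=B_i^\top P_iB_i\succeq0$. Since $f$ is a convex quadratic, $v_i$ is a minimizer if and only if $\nabla f(v_i)=0$, that is, $R_iv_i=-B_i^\top P_ia_i$.

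The first step is to show this linear system is always solvable, which is where the pseudoinverse needs care. I would write $\tilde B_i:=P_i^{1/2}B_i$, so that $R_i=\tilde B_i^\top\tilde B_i$ and $B_i^\top P_ia_i=\tilde B_i^\top P_i^{1/2}a_i$. Then the right-hand side lies in $\operatorname{range}(\tilde B_i^\top)=\operatorname{range}(\tilde B_i^\top\tilde B_i)=\operatorname{range}(R_i)$. Consequently $v_i^\star=-R_i^\dagger B_i^\top P_ia_i$ satisfies the normal equations, using $R_iR_i^\dagger y=y$ for $y\in\operatorname{range}(R_i)$. Because $R_i^\dagger y$ is the minimum-norm solution of $R_iv=y$ whenever the system is consistent, $v_i^\star$ is the minimum-norm minimizer. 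Equivalently, $v_i^\star=-\tilde B_i^\dagger P_i^{1/2}a_i$, the minimum-norm least-squares solution of $\tilde B_iv\approx-P_i^{1/2}a_i$.

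Next, I would evaluate the optimal value at $v_i^\star$. From the normal equations, $v_i^{\star\top}R_iv_i^\star=-a_i^\top P_iB_iv_i^\star$, so $f(v_i^\star)=a_i^\top P_ia_i+a_i^\top P_iB_iv_i^\star$. Substituting $v_i^\star$ gives $a_i^\top(P_i-P_iB_iR_i^\dagger B_i^\top P_i)a_i$, which is \eqref{eq:unconstrained_completion_value}. Alternatively, $f(v_i^\star)=\|(I-\tilde B_i\tilde B_i^\dagger)P_i^{1/2}a_i\|^2$ yields the same expression via $\tilde B_i(\tilde B_i^\top\tilde B_i)^\dagger\tilde B_i^\top=\tilde B_i\tilde B_i^\dagger$.

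The final consequence follows from comparing feasible sets. When $\U_i=\R^{m_i}$, the SOCP \eqref{eq:linear_completion_socp_full} reduces exactly to $\Psi_i(a_i)=\Psi_i^{\rm unc}(a_i)\le b_i$, so the test is exact. When input constraints are present, the feasible set of \eqref{eq:linear_completion_value_qp} is a subset of $\R^{m_i}$, hence $\Psi_i^{\rm unc}(a_i)\le\Psi_i(a_i)$. Therefore $\Psi_i^{\rm unc}(a_i)\le b_i$ is necessary for feasibility of Problem~\ref{prob:one_step_completion_problem}. The only step that is not routine is the range argument, which handles a possibly singular $R_i$ when $B_i$ lacks full column rank.
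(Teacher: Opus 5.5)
Your proof is correct and follows the same route as the paper: the normal equations $R_iv_i=-B_i^\top P_ia_i$ of the weighted least-squares problem, the pseudoinverse as the minimum-norm solution, and direct substitution to obtain the optimal value. You also supply two points the paper leaves implicit: the range argument showing the normal equations are consistent (so a minimizer exists) when $R_i$ is singular, and the feasible-set comparison $\Psi_i^{\rm unc}(a_i)\leq\Psi_i(a_i)$ behind the exact/necessary-test claim.
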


\begin{proof}
Equation~\eqref{eq:linear_completion_value_qp} without input constraints is a weighted
least-squares problem.  Its normal equation is
$R_i v_i=-B_i^\top P_i a_i$.  The pseudoinverse gives the minimum-norm solution, and direct
substitution gives \eqref{eq:unconstrained_completion_value}.
\end{proof}

\begin{remark}
The SOCP \eqref{eq:linear_completion_socp_full} is an online test for the shifted
prefix available at time $t+1$.  A uniform offline guarantee over a prescribed family of shifted
prefixes requires an outer approximation of the reachable triples
$(\tilde x_i^{\nom}(M-1|t+1),\bar x_{\Ni}(M-1|t+1),b_i(t+1))$.  Given a polytopic or
ellipsoidal outer approximation, robust LP/SOCP/SDP conditions with an affine final-input law
provide sufficient offline tests.  Without such a reachable-set description, Assumption~\ref{ass:clf-small-gain} alone
does not imply feasibility of the final input for an arbitrary shifted prefix.
\end{remark}

\subsection{Quadratic prediction-mismatch and intermediate-time bounds}
Let $p_i$ and $\theta$ satisfy \eqref{eq:scaled_edge_condition}, and let $V$ be given by
\eqref{eq:linear_global_V}.  Define
%\begin{equation}\label{eq:linear_cP_cV}
$c_P:=\max_i\frac{\lambda_{\max}(P_i)}{p_i},
 c_V:=\frac{1}{\ell}\min_i\frac{\lambda_{\min}(P_i)}{p_i}>0$.
%\end{equation}
Then
\begin{equation}\label{eq:linear_V_quadratic_bounds}
 c_V\|x\|_2^2\leq V(x)\leq c_P\|x\|_2^2.
\end{equation}
Moreover, for every $x,z\in\R^n$ and every $\nu>0$,
\begin{equation}\label{eq:linear_quadratic_perturbation}
 V(x+z)
 \leq (1+\nu)V(x)
 +(1+\nu^{-1})c_P\max_i\|z_i\|_2^2.
\end{equation}
Indeed, for each subsystem,
$\|P_i^{1/2}(x_i+z_i)\|_2^2
\leq(1+\nu)x_i^\top P_i x_i+(1+\nu^{-1})z_i^\top P_i z_i$; division by
$p_i$ and maximization over $i$ give \eqref{eq:linear_quadratic_perturbation}.  This global
quadratic estimate is sharper for the present linear-quadratic setting than invoking a Lipschitz
modulus on a compact set.

To specialize Lemma~\ref{lem:initial_intermediate_K_bound}, let $E_i$ denote the block selector
satisfying $x_i=E_i x$, and define
%\begin{equation}\label{eq:linear_q_cost_coeff}
$c_{q,i}:=\sum_{l=0}^{M-1}\|P_i^{1/2}E_iA_K^l\|_2^2,
 c_q:=\max_i\frac{c_{q,i}}{p_i}$.
%\end{equation}
For the optimal value $J_i^\star(t)$ of the local OCP, set
%\begin{align}\label{eq:linear_deltaJ}
$\delta_{J,i}^L(t)
 :=\max\{J_i^\star(t)-c_{q,i}\|x(t)\|_2^2,0\},
 \delta_J^L(t):=\max_i\frac{\delta_{J,i}^L(t)}{p_i},$
%\end{align}
and define the largest actual-to-nominal mismatch bound over the intermediate indices by
%\begin{equation}\label{eq:linear_epsilon_int}
$\varepsilon_L(t):=
 \max_{r=0,\ldots,M-1}\max_i
 \bigl(r_i^\star(r|t)+s_i(r|t)\bigr)$.
%\end{equation}
Fix any $\nu_{\rm int}>0$ and put
\begin{equation}\label{eq:linear_kappa_Delta}
 \kappa_L:= (1+\nu_{\rm int})\frac{c_q}{c_V},
 \qquad
 \Delta_L(t):=(1+\nu_{\rm int})\delta_J^L(t)
 +(1+\nu_{\rm int}^{-1})c_P\varepsilon_L(t)^2.
\end{equation}

\begin{lemma}
%[Intermediate-time estimate for the constrained linear network]
\label{lem:linear_intermediate_bound}
Suppose the local OCPs are recursively feasible.  Then, for every $t\in\Zp$ and
$r=0,\ldots,M-1$,
\begin{equation}\label{eq:linear_intermediate_bound}
 V(x(t+r))\leq \kappa_L V(x(t))+\Delta_L(t).
\end{equation}
In addition, $x_i(t+r)\in\X_i$ for every subsystem and every such intermediate physical time.
\end{lemma}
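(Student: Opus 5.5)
The plan is to mirror the proof of Lemma~\ref{lem:initial_intermediate_K_bound}, replacing the general $\K$-function and modulus arguments with the explicit quadratic constants. Constraint satisfaction is identical to the earlier lemma and is shown first. For $r\geq1$, recursive feasibility gives a feasible OCP at time $t+r-1$. Because $\bar x_{\Ni}(0|t+r-1)=x_{\Ni}(t+r-1)$ and the applied input is $u_i^\star(0|t+r-1)$, we have $x_i(t+r)=x_i^{\nom,\star}(1|t+r-1)\in\X_i\ominus\B_{r_i^\star(1|t+r-1)}\subseteq\X_i$. The case $r=0$ is immediate from recursive feasibility.

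For the $V$-bound, split $x(t+r)=x^{\nom,\star}(r|t)+z$, where $z:=x(t+r)-x^{\nom,\star}(r|t)$. By Lemma~\ref{lem:corrected_actual_nominal_bound}, each block satisfies $\|z_i\|_2\leq r_i^\star(r|t)+s_i(r|t)\leq\varepsilon_L(t)$. Applying \eqref{eq:linear_quadratic_perturbation} with $\nu=\nu_{\rm int}$ gives
\[
V(x(t+r))\leq(1+\nu_{\rm int})V(x^{\nom,\star}(r|t))+(1+\nu_{\rm int}^{-1})c_P\varepsilon_L(t)^2 .
\]
This is exactly the second term of $\Delta_L(t)$. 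Unlike the nonlinear case, no compact set $\mathcal Z$ or modulus $\omega_V$ is needed, since \eqref{eq:linear_quadratic_perturbation} holds globally.

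It remains to bound the nominal term, and I expect this to be the main bookkeeping step. All stage costs are nonnegative, so $W_i(x_i^{\nom,\star}(r|t))\leq J_i^\star(t)\leq c_{q,i}\|x(t)\|_2^2+\delta_{J,i}^L(t)$ for $r\leq M-1$. Dividing by $p_i$ gives $W_i/p_i\leq (c_{q,i}/p_i)\|x(t)\|_2^2+\delta^L_{J,i}(t)/p_i$. Maximizing over $i$, and using the \emph{sum} rather than the max-splitting $a+b\leq\max\{2a,2b\}$, yields $V(x^{\nom,\star}(r|t))\leq c_q\|x(t)\|_2^2+\delta_J^L(t)$. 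By \eqref{eq:linear_V_quadratic_bounds}, $\|x(t)\|_2^2\leq V(x(t))/c_V$. Substituting this and multiplying by $(1+\nu_{\rm int})$ gives $(1+\nu_{\rm int})V(x^{\nom,\star}(r|t))\leq\kappa_L V(x(t))+(1+\nu_{\rm int})\delta_J^L(t)$. Combined with the previous display, this is \eqref{eq:linear_intermediate_bound}.

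Two small checks complete the argument. First, the constant $c_{q,i}$ is only a benchmark that enters through the definition of $\delta_{J,i}^L$, so the bound holds regardless of admissibility of $q$; no optimality comparison is required. Second, the lower bound in \eqref{eq:linear_V_quadratic_bounds} with the factor $1/\ell$ is justified by $\max_i\|x_i\|^2\geq\|x\|^2/\ell$.
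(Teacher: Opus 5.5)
Your proposal is correct and follows the paper's proof. You bound the nominal term by the local optimal cost (divide by $p_i$, maximize over $i$, use $c_V$), control the actual-to-nominal gap with Lemma~\ref{lem:corrected_actual_nominal_bound}, and combine the two via \eqref{eq:linear_quadratic_perturbation} at $\nu=\nu_{\rm int}$. You only reverse the order and prove constraint satisfaction directly instead of citing Corollary~\ref{cor:true_constraint_satisfaction}, and you rightly note that the admissibility bound behind $c_{q,i}$ is motivational, not logically needed.

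One small nit: at $t=r=0$ the OCP places no constraint on $x^{\nom}(0|0)=x(0)$. So $x(0)\in\X$ must be assumed, as in the hypothesis of Corollary~\ref{cor:true_constraint_satisfaction}, rather than deduced from feasibility.
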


\begin{proof}
The constraint statement follows from Corollary~\ref{cor:true_constraint_satisfaction}.  We prove
\eqref{eq:linear_intermediate_bound}.  Along the admissible linear feedback
\eqref{eq:q_linear_sequence},
 $W_i(E_iA_K^l x)\leq
 \|P_i^{1/2}E_iA_K^l\|_2^2\|x\|_2^2$,
so $c_{q,i}\|x(t)\|_2^2$ is the explicit linear-quadratic counterpart of the comparison
quantity in Lemma~\ref{lem:initial_intermediate_K_bound}.  Since every term in the local OCP
objective is nonnegative, then
 $W_i(x_i^{\nom,\star}(r|t))\leq J_i^\star(t)
 \leq c_{q,i}\|x(t)\|_2^2+\delta_{J,i}^L(t)$.
After division by $p_i$ and maximization over $i$,
\begin{equation}\label{eq:linear_nom_intermediate}
 V(x^{\nom,\star}(r|t))
 \leq c_q\|x(t)\|_2^2+\delta_J^L(t)
 \leq \frac{c_q}{c_V}V(x(t))+\delta_J^L(t),
\end{equation}
where the last inequality uses \eqref{eq:linear_V_quadratic_bounds}.
Lemma~\ref{lem:corrected_actual_nominal_bound} gives
$\|x_i(t+r)-x_i^{\nom,\star}(r|t)\|_2
\leq r_i^\star(r|t)+s_i(r|t)\leq\varepsilon_L(t)$.
Applying \eqref{eq:linear_quadratic_perturbation} with $\nu=\nu_{\rm int}$ to
\eqref{eq:linear_nom_intermediate} yields
\eqref{eq:linear_intermediate_bound}--\eqref{eq:linear_kappa_Delta}.
\end{proof}

The next theorem collects the linear certificates into one terminal-set-free result.  It separates
offline finite-step and admissibility tests from the online shift-margin and one-step terminal
feasibility tests and from the measured prediction/reoptimization mismatch.

\begin{theorem}
%[Constrained linear DMPC: recursive feasibility and practical stability]
\label{thm:cln_RF_stability}
Consider \eqref{eq:lin_network} with local polyhedral constraints
\eqref{eq:local_poly_constraints}, Problem~\ref{prob:OCP-2}, and
Algorithm~\ref{alg:dist_multi_step_mpc}.  Suppose:
\begin{enumerate}
\item[(L1)] $P_i\succ0$, a block-diagonal $K$, a finite-step length $M$, and gains $a_{ij}$
satisfy either Theorem~\ref{thm:ass1ii_linear} or
Proposition~\ref{prop:gain_sdp_certificate}, and the LP
\eqref{eq:log_scaling_lp} returns $p_i>0$ and $\theta\in(0,1)$;
\item[(L2)] the finite-step feedback \eqref{eq:q_linear_sequence} is admissible on the
Assumption~\ref{ass:clf-small-gain} domain; Proposition~\ref{prop:linear_admissibility_domain} gives one sufficient
ellipsoidal test;
\item[(L3)] the local OCPs are feasible at $t=0$, and at every transition the exact row tests
\eqref{eq:linear_shift_row_check} and the one-step terminal-feasibility SOCP
\eqref{eq:linear_completion_socp_full} are feasible.
\end{enumerate}
Define
\begin{equation}\label{eq:linear_rho_definition_new}
 \rho(t):=\max_i\bigl(r_i^\star(M|t)+s_i(M|t)\bigr).
\end{equation}
Choose $\nu_M>0$ so that
%\begin{equation}\label{eq:linear_a_b}
$a_L:=(1+\nu_M)\theta<1,
 b_L:=(1+\nu_M^{-1})c_P$.
%\end{equation}
Then:
\begin{enumerate}
\item[(a)] the OCPs are recursively feasible and
$x(t)\in\X$, $u(t)\in\U$ for all $t\in\Zp$;
\item[(b)] the intermediate physical times satisfy
\begin{equation}\label{eq:linear_intermediate_theorem}
 V(x(t+r))\leq\kappa_LV(x(t))+\Delta_L(t),
 \qquad r=0,\ldots,M-1;
\end{equation}
\item[(c)] the closed-loop satisfies the quadratic $M$-step estimate
\begin{equation}\label{eq:linear_additive_Mstep}
 V(x(t+M))\leq a_LV(x(t))+b_L\rho(t)^2;
\end{equation}
\item[(d)] for every $r\in\{0,\ldots,M-1\}$ and $k\in\Zp$,
%\begin{align}\label{eq:linear_interlaced_bound}
 $V(x(r+kM))
 \leq a_L^k\bigl(\kappa_LV(x(0))+\Delta_L(0)\bigr)+b_L\sum_{q=0}^{k-1}a_L^{k-1-q}\rho(r+qM)^2$,
%\end{align}
where the sum is zero for $k=0$;
\item[(e)] if $\rho(t)\leq\bar\rho$ for all $t$, then
%\begin{equation}\label{eq:linear_practical_limsup}
$ \limsup_{t\to\infty}V(x(t))
 \leq\frac{b_L}{1-a_L}\bar\rho^2,
 \limsup_{t\to\infty}\|x(t)\|_2
 \leq
 \sqrt{\frac{b_L}{c_V(1-a_L)}}\,\bar\rho$,
%\end{equation}
\item[(f)] if $\rho(t)\to0$, then $x(t)\to0$;
\item[(g)] if $\rho(t)=0$ for all $t$ and, on the considered domain, there is a constant
$d_\Delta\geq0$ such that
\begin{equation}\label{eq:linear_Delta_state_bound}
 \Delta_L(0)\leq d_\Delta V(x(0)),
\end{equation}
then the origin is asymptotically stable on that domain.  More precisely,
\begin{equation}\label{eq:linear_fulltime_AS_bound}
 V(x(r+kM))\leq
 a_L^k(\kappa_L+d_\Delta)V(x(0)),
 \qquad r=0,\ldots,M-1,
\end{equation}
which yields a $\KL$ estimate in the stacked Euclidean norm through
\eqref{eq:linear_V_quadratic_bounds}.
\end{enumerate}
\end{theorem}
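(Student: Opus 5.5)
The proof assembles the general results of Section~\ref{sec:dist-fsclf-stability} with the linear certificates of this section, using the explicit quadratic estimate \eqref{eq:linear_quadratic_perturbation} in place of the compact-set modulus $\omega_V$. For (a), I will check the hypotheses of Theorem~\ref{thm:online_tube_rec_feas}. By Lemma~\ref{lem:shift_margin_verification}, the row tests \eqref{eq:linear_shift_row_check} are exactly Assumption~\ref{ass:shift_margin_condition} for the polyhedral $\X_i$. Feasibility of the SOCP \eqref{eq:linear_completion_socp_full} is Problem~\ref{prob:one_step_completion_problem}, by Lemma~\ref{lem:tractable_completion}(i), since linear subsystems are affine in control and $W_i$ is quadratic. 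Induction from $t=0$ then gives recursive feasibility, and Corollary~\ref{cor:true_constraint_satisfaction} gives $x(t)\in\X$ and $u(t)\in\U$. Part (b) is Lemma~\ref{lem:linear_intermediate_bound} verbatim.

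For (c), I will argue directly rather than through Proposition~\ref{prop:prac_asyp_like_decay}. Each local terminal constraint with $\gamma_{ij}(s)=a_{ij}s$ gives
\[
W_i(x_i^{\nom,\star}(M|t))\leq\max_j a_{ij}W_j(x_j(t))\leq\max_j a_{ij}p_jV(x(t)).
\]
Dividing by $p_i$ and using \eqref{eq:scaled_edge_condition} from (L1) yields $V(x^{\nom,\star}(M|t))\leq\theta V(x(t))$. Lemma~\ref{lem:corrected_actual_nominal_bound} bounds the blockwise error by $\|x_i(t+M)-x_i^{\nom,\star}(M|t)\|\leq r_i^\star(M|t)+s_i(M|t)\leq\rho(t)$. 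Applying \eqref{eq:linear_quadratic_perturbation} with $\nu=\nu_M$, $x=x^{\nom,\star}(M|t)$ and $z$ the error then gives
\[
V(x(t+M))\leq(1+\nu_M)\theta V(x(t))+(1+\nu_M^{-1})c_P\rho(t)^2,
\]
which is \eqref{eq:linear_additive_Mstep}. One subtlety: $V$ uses all $i$, including $j\notin\mathcal G_i^\gamma$. Since those $a_{ij}$ vanish, this causes no difficulty. Note also that (L2) is used only to justify the cost-comparison constant $c_{q,i}$ entering (b).

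For (d), I will iterate (c) along $t=r+kM$. This is a linear scalar recursion $Y(k+1)\leq a_LY(k)+b_L\rho(r+kM)^2$, which unrolls to the stated sum, with the initial value $Y(0)=V(x(r))$ bounded via (b) at $t=0$. For (e), I bound the sum by the geometric series $b_L\bar\rho^2/(1-a_L)$ and let $a_L^k\to0$; the norm bound then follows from $c_V\|x\|^2\leq V$ in \eqref{eq:linear_V_quadratic_bounds}, and I take the $\limsup$ over the $M$ interlaced subsequences. For (f), I show that the convolution of the summable kernel $a_L^{j}$ with a null sequence tends to zero (a standard $\varepsilon$--$N$ split), and again use \eqref{eq:linear_V_quadratic_bounds}. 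For (g), setting $\rho\equiv0$ in (d) and inserting \eqref{eq:linear_Delta_state_bound} gives \eqref{eq:linear_fulltime_AS_bound}. Combining this with \eqref{eq:linear_V_quadratic_bounds} yields
\[
\|x(t)\|\leq\sqrt{c_P(\kappa_L+d_\Delta)/c_V}\;a_L^{\lfloor t/M\rfloor/2}\|x(0)\|,
\]
an exponential and hence $\KL$ estimate, which gives Lyapunov stability and attractivity.

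The main obstacle is step (c): making sure the terminal inequality is actually imposed with the right-hand side $b_i(t)=\max_j a_{ij}W_j(x_j(t))$ from the current measured states. This holds by Assumption~\ref{ass:terminal_comm}, and it lets the LP scaling convert the local terminal inequalities into a global contraction of $V$ without invoking Assumption~\ref{ass:clf-small-gain}(ii) on the actual closed loop. A second point to check carefully is that Lemma~\ref{lem:corrected_actual_nominal_bound} applies at $l=M$ with the auxiliary radius \eqref{eq:terminal_radius_aux}, so that $\rho(t)$ in \eqref{eq:linear_rho_definition_new} really dominates every block error.
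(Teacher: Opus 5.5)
Your proposal is correct and follows the paper's proof step by step. Part (a) goes through Theorem~\ref{thm:online_tube_rec_feas}, with the row tests and the SOCP as its hypotheses. Part (b) is Lemma~\ref{lem:linear_intermediate_bound}. Part (c) combines the terminal inequalities with Lemma~\ref{lem:corrected_actual_nominal_bound} and \eqref{eq:linear_quadratic_perturbation} at $\nu=\nu_M$. Parts (d)--(g) iterate the scalar recursion along the $M$ interlaced subsequences.

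The differences are cosmetic. You derive $V(x^{\nom,\star}(M|t))\leq\theta V(x(t))$ directly from \eqref{eq:scaled_edge_condition} rather than citing Lemma~\ref{lem:cycle_linear}. You also state the $\KL$ bound in (g) as an explicit exponential estimate instead of interpolating $a_L^k$.
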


\begin{proof}
Part (a) follows from Theorem~\ref{thm:online_tube_rec_feas}: the row tests
\eqref{eq:linear_shift_row_check} are equivalent to Assumption~\ref{ass:shift_margin_condition}
and \eqref{eq:linear_completion_socp_full} is exactly Problem~\ref{prob:one_step_completion_problem}
for the linear-quadratic subsystem.  Corollary~\ref{cor:true_constraint_satisfaction} gives the
state and input constraints.  Part (b) is Lemma~\ref{lem:linear_intermediate_bound}.
By Lemma~\ref{lem:cycle_linear}, the terminal inequalities of the OCP imply
$V(x^{\nom,\star}(M|t))\leq\theta V(x(t))$.  Lemma~\ref{lem:corrected_actual_nominal_bound}
and \eqref{eq:linear_rho_definition_new} give
$\max_i\|x_i(t+M)-x_i^{\nom,\star}(M|t)\|_2\leq\rho(t)$.
Applying \eqref{eq:linear_quadratic_perturbation} with $\nu=\nu_M$ proves
\eqref{eq:linear_additive_Mstep}.

For a fixed intermediate index $r$, define
$Y_r(k):=V(x(r+kM))$ and $R_r(k):=\rho(r+kM)$.  Equation
\eqref{eq:linear_additive_Mstep} gives
$Y_r(k+1)\leq a_LY_r(k)+b_LR_r(k)^2$.  Iteration yields
 $Y_r(k)\leq a_L^kY_r(0)
 +b_L\sum_{q=0}^{k-1}a_L^{k-1-q}R_r(q)^2$.

Using \eqref{eq:linear_intermediate_theorem} at $t=0$ gives
$Y_r(0)\leq\kappa_LV(x(0))+\Delta_L(0)$, proving (d).
If $R_r(k)\leq\bar\rho$, the geometric series gives
$\limsup_{k\to\infty}Y_r(k)\leq b_L\bar\rho^2/(1-a_L)$.  Since there are only $M$
interlaced sequences and $c_V\|x\|_2^2\leq V(x)$, (e) follows.

If $R_r(k)\to0$, the convolution with the exponentially decaying sequence $a_L^k$ tends to
zero: for any $\epsilon>0$, split the sum at a sufficiently large index $N$ so that
$b_LR_r(q)^2\leq\epsilon(1-a_L)/2$ for $q\geq N$; the finite prefix is multiplied by
$a_L^{k-N}$ and therefore tends to zero, while the tail is at most $\epsilon/2$.  Hence
$Y_r(k)\to0$ for every $r$, which proves (f).

Finally, if $\rho\equiv0$, (d) and \eqref{eq:linear_Delta_state_bound} give
\eqref{eq:linear_fulltime_AS_bound}.  The factor $a_L^k$ tends to zero and the multiplier
$\kappa_L+d_\Delta$ is independent of the initial condition.  Standard interpolation of the
integer iterates $a_L^k$ and the two quadratic comparison inequalities
\eqref{eq:linear_V_quadratic_bounds} give a $\KL$ estimate and therefore asymptotic stability.
\end{proof}

\begin{remark}
%[Verification of the state-dependent intermediate offset]
Condition~\eqref{eq:linear_Delta_state_bound} is the linear-network counterpart of the
state-dependent offset condition in Proposition~\ref{prop:practical_form_rigorous}(v).  A directly
checkable sufficient condition is
 $\delta_J^L(0)\leq c_JV(x(0)),
 \varepsilon_L(0)\leq c_\varepsilon\sqrt{V(x(0))}$,
for constants $c_J,c_\varepsilon\geq0$.  Then
 $d_\Delta=(1+\nu_{\rm int})c_J
 +(1+\nu_{\rm int}^{-1})c_Pc_\varepsilon^2$
satisfies \eqref{eq:linear_Delta_state_bound}.  Strong regularity of a local parametric QP can
help establish the second estimate through local Lipschitz dependence of the optimizer and the
predicted trajectory.  It does not, by itself, prove the first estimate because the OCP also
contains the linear penalty on the tightening radii; the cost-excess bound must therefore be
verified separately before a stability claim is made.
\end{remark}

%\begin{remark}
%Conditions \eqref{eq:linear_shift_row_check} and \eqref{eq:linear_completion_socp_full} are evaluated before accepting the shifted candidate at time $t+1$.  Theorem~\ref{thm:cln_RF_stability} proves recursive feasibility on every execution for which both tests pass.  An unconditional implementation theorem additionally requires a specified recursively feasible fallback whenever either test fails.
%\end{remark}

%\input{Case_Study_dcmg}
\section{Case Study: Current Sharing in a Two-DGU DC Microgrid}
\label{sec:case_study_twodgu}

This section specializes Algorithm~\ref{alg:dist_multi_step_mpc} and the constrained-linear analysis of Section~\ref{sec:Constrained linear networks} to the two-DGU prototype.
The DC microgrid is composed of two Distributed Generation Units (DGUs) coupled through resistive ($R_{12}$) power lines to each other.
The energy source of each DGU is provided to the load through a compatible DC-DC power converter TPS55289.
We model the DC load as a constant current source which is located at the terminal bus, so-called Point of Common Coupling (PCC).
In our hardware, a constant current source is realized by one adjustable linear voltage regulator STMicroelectronics LM317 TO-220 (LM317T).
For each DGU, the terminal bus voltage is measured by a A/D voltage sensor ADS1115. 
The DC/DC converter is connected via an RLC filter to the load, the current flowing through the series RL part is measured by a current sensor INA228. 
Fig.~\ref{fig:dcmg_circuit} illustrates an electrical diagram of  DGU $i$ connecting over $R_{ij}$ to the grid.
\begin{figure}[t]
\centering
\includegraphics[width=0.9\linewidth]{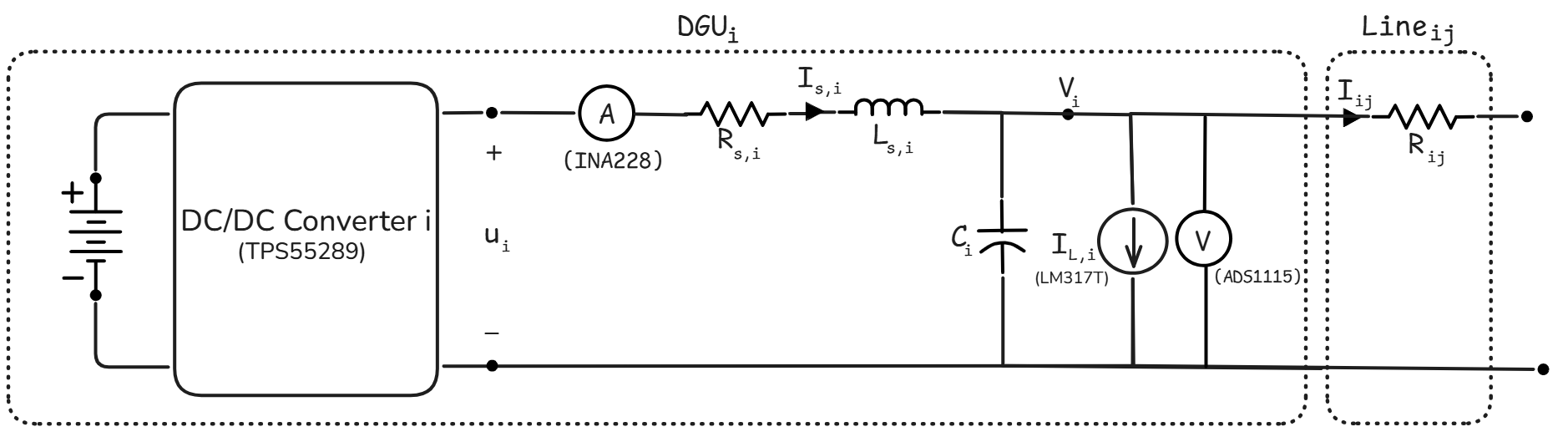}
\caption{Electrical scheme of DGU $i$ linked to line $ij$.}
\label{fig:dcmg_circuit}
\end{figure}
The application has three distinct objectives: source-current sharing, hard bus voltage safety, and hard converter-output (actuator) constraint.

\subsection{Plant model, sharing equilibrium, and constrained control objectives}
\label{subsec:twodgu_objectives}

For DGU~$i$, let $I_{s,i}$ denote the generated source current, $V_i$ the PCC voltage, $u_i$ the controlled converter output voltage, and $I_{L,i}$ the load current.
Under the quasi-stationary-line approximation~\cite{Venkatasubramanian.1995}, we have
\begin{align}
L_{s,i}\dot I_{s,i} &= -R_{s,i}I_{s,i}-V_i+u_i,\label{eq:twodgu_ct_Is_new}\\
C_i\dot V_i &= I_{s,i}-I_{L,i}-\sum_{j\in\mathcal N_i}\frac{V_i-V_j}{R_{ij}}.\label{eq:twodgu_ct_V_new}
\end{align}
For the two-DGU line,
\begin{equation}
\mathcal B=\begin{bmatrix}-1\\1\end{bmatrix},\qquad
\mathcal L:=\mathcal B R_{12}^{-1}\mathcal B^\top
=\frac{1}{R_{12}}
\begin{bmatrix}1&-1\\-1&1\end{bmatrix}.
\end{equation}
With $I_s=(I_{s,1},I_{s,2})^\top$, $V=(V_1,V_2)^\top$, and $u=(u_1,u_2)^\top$, the stacked model is
\begin{equation}\label{eq:twodgu_system_new}
L_s\dot I_s=-R_sI_s-V+u,\qquad
C\dot V=I_s-I_L-\mathcal LV.
\end{equation}
The local physical state is $x_i=(I_{s,i},V_i)^\top$.
For a positive diagonal weighting matrix $W={\rm diag}(w_1,w_2)$, the desired weighted source-current allocation is
\begin{equation}\label{eq:twodgu_current_sharing_objective_new}
\bar I_s
=W^{-1}\mathds 1_2\,
\frac{\mathds 1_2^\top I_L}{\mathds 1_2^\top W^{-1}\mathds 1_2}.
\end{equation}
It is the unique vector satisfying $\mathds 1_2^\top\bar I_s=\mathds 1_2^\top I_L$, $w_1\bar I_{s,1}=w_2\bar I_{s,2}$.
The remaining two objectives are the hard constraints
$V^{\min}\le V_i(t)\le V^{\max}$,
$u^{\min}\le u_i(t)\le u^{\max}$, $i=1,2$.
A steady state compatible with \eqref{eq:twodgu_current_sharing_objective_new} satisfies
\begin{equation}\label{eq:twodgu_equilibrium_conditions_new}
\mathcal LV^\star=\bar I_s-I_L,
\qquad
u^\star=R_s\bar I_s+V^\star.
\end{equation}
Because $\mathds 1_2^\top(\bar I_s-I_L)=0$, the first equation is solvable.
The additive voltage degree of freedom is fixed by the gauge condition $\mathds 1_2^\top V^\star=2V^{\rm nom}$.

\begin{table}[t]
\centering
\caption{Two-DGU prototype parameters used for simulation and hardware validation.}
\label{tab:dgu_params}
\begin{tabular}{c c c c c}
\hline
\textbf{DGU} & $R_{s,i}$ [$\Omega$] & $L_{s,i}$ [mH] & $C_i$ [mF] & $I_{L,i}$ [A]\\
\hline
1 & 0.83 & 3.3 & 8.0 & 0.320\\
2 & 0.83 & 3.3 & 8.0 & 0.356\\
\hline
\end{tabular}

\vspace{0.6em}

\begin{tabular}{c c c c c}
\hline
\textbf{Line} & $R_{12}$ [$\Omega$] & $T$ [s] & $[V^{\min},V^{\max}]$ [V] & $[u^{\min},u^{\max}]$ [V]\\
\hline
(1,2) & 1.20 & 0.100 & [11,13] & [0,15]\\
\hline
\end{tabular}
\end{table}

For the load vector and equal sharing weights $I_L=(0.320,0.356)^\top\,{\rm A},\qquad W=I_2$,
we obtain $\bar I_s=(0.338,0.338)^\top\,{\rm A}$.
With $V^{\rm nom}=12\,{\rm V}$, equations~\eqref{eq:twodgu_equilibrium_conditions_new} give
\begin{equation}\label{eq:twodgu_equilibrium_values_new}
V^\star=(12.01,11.99)^\top\,{\rm V},
\qquad
u^\star=(12.29,12.27)^\top\,{\rm V}.
\end{equation}
Both voltage and input equilibria lie strictly inside the admissible intervals.

The physical load current $I_{L,i}$ remains in the voltage dynamics, whereas the source-current stage and terminal terms are centered at $\bar I_{s,i}$.  Replacing $\bar I_{s,i}$ by $I_{L,i}$ would impose local load tracking and would eliminate the intended power exchange through the tie line.

\begin{remark}\label{rem:twodgu_closed_set}
Strictly speaking, the current sharing together with voltage safety objectives can be formulated as stabilization with respect to the closed set to $\mathcal A_i:=\{(I_{s,i},V_i)\in\mathbb R^2:I_{s,i}=\bar I_{s,i}\}$.
Following the results in~\cite{Noroozi.2018,Noroozi.2020}, one can easily extend the statements in the preceding sections to the closed-set setting by replacing state norms with the so-called measurement functions.
\end{remark}

\subsection{Summary of the finite-step, recursive-feasibility, and stability checks}
\label{subsec:twodgu_certificate_summary}

Let
$\tilde I_s:=I_s-\bar I_s$, $\tilde V:=V-V^\star$, $\tilde u:=u-u^\star$,
and define the interleaved shifted state
$\tilde x=(\tilde I_{s,1},\tilde V_1,\tilde I_{s,2},\tilde V_2)^\top$.
Exact zero-order-hold discretization of \eqref{eq:twodgu_system_new} at $T=0.1\,\mathrm{s}$ gives
$\tilde x(t+1)=A\tilde x(t)+B\tilde u(t)$.  With $e_i:=\tilde I_{s,i}$ we use
$W_i=e_i^2$, $i=1,2$, which are positive definite with respect to the current-sharing sets in
Remark~\ref{rem:twodgu_closed_set}.  The argument below is therefore the closed-set,
current-error specialization of the quadratic analysis in
Section~\ref{sec:Constrained linear networks}.  
Due to space constraints, we don't present the offline feasibility and stability checks.
Full-precision matrices, modal-coordinate
calculations, invariant-set computations, and the numerical interval checks are available through a user-friendly Python-native simulation environment:
\href{https://github.com/navidnoroozi/dmpc-4-dcmg}{\texttt{dmpc-4-dcmg}} as the paper's appendix. 
For more details, check out the git repo instruction \href{https://github.com/navidnoroozi/dmpc-4-dcmg/blob/main/Python_env/HOW_TO_RUN_WORKFLOW.md}{\texttt{README}} supported with a simulation workflow (copy/paste) guideline \href{https://github.com/navidnoroozi/dmpc-4-dcmg/blob/main/Python_env/HOW_TO_RUN_WORKFLOW.md}{\texttt{HOW-TO-RUN-WORKFLOW}}.

A modal-coordinate invariance calculation gives a compact domain on which
$11\leq V_i\leq13$ and
$11.3197\leq u_i\leq13.2414\subset[0,15]$.
The recursive-feasibility tests are scalar.
With
$\delta_i(l|t+1):=\chi_i(l|t+1)+\tilde r_i(l|t+1)$, the shift-margin condition is exactly
\begin{equation}\label{eq:twodgu_exact_shift_check_new}
 V^{\min}+\delta_i(l|t+1)
 \leq V_i^{\nom,\star}(l+1|t)
 \leq V^{\max}-\delta_i(l|t+1),
 \qquad l=1,\ldots,M-1.
\end{equation}
For the final input of the shifted candidate, the predicted current error has the affine form
$e_i^+=\zeta_i(t+1)+\beta_i v_i$, with $\beta_i=0.35\neq0$.  Writing
$h_i:=\sqrt{b_i(t+1)}$, the one-step terminal inequality is feasible if and only if
\begin{equation}\label{eq:twodgu_completion_interval_new}
[u^{\min},u^{\max}]
\cap
\left[
\min\!\left\{\frac{-h_i-\zeta_i}{\beta_i},
              \frac{ h_i-\zeta_i}{\beta_i}\right\},
\max\!\left\{\frac{-h_i-\zeta_i}{\beta_i},
              \frac{ h_i-\zeta_i}{\beta_i}\right\}
\right]\neq\varnothing.
\end{equation}
Thus \eqref{eq:twodgu_exact_shift_check_new} and
\eqref{eq:twodgu_completion_interval_new} are exact local tests of the two conditions used in
Theorem~\ref{thm:online_tube_rec_feas}.

We next specialize the corrected stability bounds.  Define
%\begin{equation}\label{eq:twodgu_rho_new} 
$\rho_I(t):=\max_{i=1,2}\bigl(r_i^\star(2|t)+s_i(2|t)\bigr)$.
%\end{equation}
Because the current component of the local state error is no larger than its Euclidean norm,
Lemma~\ref{lem:corrected_actual_nominal_bound} implies
$|e_i(t+2)-e_i^{\nom,\star}(2|t)|\leq\rho_I(t)$.  Therefore, for every $\nu_I>0$,
\begin{equation}\label{eq:twodgu_practical_current_estimate_new}
 V_I(e(t+2))
 \leq (1+\nu_I)\theta V_I(e(t))
 +(1+\nu_I^{-1})\rho_I(t)^2.
\end{equation}
Choose $\nu_I$ such that
$a_I:=(1+\nu_I)\theta<1$ and set $b_I:=1+\nu_I^{-1}$.
Unlike former compact-set Lipschitz estimate, \eqref{eq:twodgu_practical_current_estimate_new}
is a direct quadratic bound and doesn't require a term proportional to
$\bar e\,\rho_I$.
The intermediate physical time is treated in the same way as Lemma~\ref{lem:linear_intermediate_bound}.
Let $E_i^I$ select the $i$th current error from $e$ and define
%\begin{equation}\label{eq:twodgu_cq}
$c_{I,i}:=\sum_{l=0}^{1}\|E_i^IF^l\|_2^2$, $c_I:=\max_i c_{I,i}$.
%\end{equation}
For the local optimal values $J_i^\star(t)$ define
%\begin{align}\label{eq:twodgu_deltaJ}
$\delta_{J,I}(t):=\max_{i=1,2}\max\{J_i^\star(t)-c_{I,i}\|e(t)\|_2^2,0\}$,
%\end{align}
and
%\begin{equation}\label{eq:twodgu_epsilon_int}
 $\varepsilon_I(t):=
 \max_{r=0,1}\max_{i=1,2}
 \bigl(r_i^\star(r|t)+s_i(r|t)\bigr)$.
%\end{equation}
For any fixed $\nu_{I,\rm int}>0$, put
\begin{equation}\label{eq:twodgu_kappa_Delta}
 \kappa_I:=2(1+\nu_{I,\rm int})c_I,
 \qquad
 \Delta_I(t):=(1+\nu_{I,\rm int})\delta_{J,I}(t)
 +(1+\nu_{I,\rm int}^{-1})\varepsilon_I(t)^2.
\end{equation}
Since $\|e\|_2^2\leq2V_I(e)$, the same cost-comparison and quadratic perturbation argument as
in Lemma~\ref{lem:linear_intermediate_bound} gives
\begin{equation}\label{eq:twodgu_intermediate_bound}
 V_I(e(t+r))\leq\kappa_I V_I(e(t))+\Delta_I(t),
 \qquad r=0,1.
\end{equation}

\begin{proposition}
\label{prop:twodgu_rf_stability_new}
Consider the certified $M=2$ design above.  Suppose the two local OCPs are feasible initially,
\eqref{eq:twodgu_exact_shift_check_new} and \eqref{eq:twodgu_completion_interval_new} hold at
every transition, the closed-loop states remain in the operating domain on which the finite-step
witness is admissible, and the dynamic terminal scalars of
Assumption~\ref{ass:terminal_comm} are used.  Then:
\begin{enumerate}
\item[(i)] the OCPs are recursively feasible and, for all $t$,
$11\leq V_i(t)\leq13$ and $0\leq u_i(t)\leq15$, $i=1,2$;
\item[(ii)] the current errors satisfy the intermediate-time estimate
\eqref{eq:twodgu_intermediate_bound} and the two-step estimate
\begin{equation}\label{eq:twodgu_a_b_recurrence}
 V_I(e(t+2))\leq a_I V_I(e(t))+b_I\rho_I(t)^2;
\end{equation}
\item[(iii)] if $\rho_I(t)\leq\bar\rho_I$ for all $t$, then
%\begin{equation}\label{eq:twodgu_practical_limsup_new}
 $\limsup_{t\to\infty}V_I(e(t))
 \leq\frac{b_I}{1-a_I}\bar\rho_I^2,
 \limsup_{t\to\infty}\|e(t)\|_\infty
 \leq\sqrt{\frac{b_I}{1-a_I}}\,\bar\rho_I$;
%\end{equation}
\item[(iv)] if $\rho_I(t)\to0$, then $I_s(t)\to\bar I_s$;
\item[(v)] if $\rho_I(t)=0$ for all $t$ and there exists $d_{\Delta,I}\geq0$ such that
$\Delta_I(0)\leq d_{\Delta,I}V_I(e(0))$, then the current-sharing set is asymptotically stable
with respect to the measurement $\|e\|_\infty$.  In particular,
\begin{equation}\label{eq:twodgu_fulltime_AS}
 V_I(e(2k+r))
 \leq a_I^k(\kappa_I+d_{\Delta,I})V_I(e(0)),
 \qquad r\in\{0,1\}.
\end{equation}
\end{enumerate}
No convergence of the bus-voltage vector to the particular gauge-selected $V^\star$ is asserted.
\end{proposition}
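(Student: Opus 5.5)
The plan is to mirror the proof of Theorem~\ref{thm:cln_RF_stability}, specialized to $M=2$, to the closed-set measurement $e=(\tilde I_{s,1},\tilde I_{s,2})$, and to $V_I(e):=\max_i e_i^2/p_i$ with the scaling weights from the LP \eqref{eq:log_scaling_lp}. Since $W_i=e_i^2$, we may normalize $p_i=1$, so that $V_I(e)=\|e\|_\infty^2$; this gives $\|e\|_2^2\le 2V_I(e)$ and $V_I(e)\le\|e\|_2^2$.

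For (i), invoke Theorem~\ref{thm:online_tube_rec_feas}. By Lemma~\ref{lem:shift_margin_verification}, box form, condition \eqref{eq:twodgu_exact_shift_check_new} is exactly Assumption~\ref{ass:shift_margin_condition} for the voltage constraints. The current component is unconstrained in $\X_i$, so only the voltage rows appear. By Lemma~\ref{lem:tractable_completion}(ii) with $\beta_i=0.35\neq0$, condition \eqref{eq:twodgu_completion_interval_new} is equivalent to feasibility of Problem~\ref{prob:one_step_completion_problem}. Recursive feasibility then follows, and Corollary~\ref{cor:true_constraint_satisfaction} yields $11\le V_i(t)\le13$ and $u_i(t)\in[0,15]$. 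Here we use that the actual successor equals the first nominal successor, as in the proof of Lemma~\ref{lem:initial_intermediate_K_bound}.

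For (ii), the intermediate bound \eqref{eq:twodgu_intermediate_bound} is derived in the text preceding the proposition, via the cost comparison $W_i(x_i^{\nom,\star}(r|t))\le J_i^\star(t)$ and the quadratic perturbation inequality. For the two-step estimate, take the OCP terminal constraint $e_i^{\nom,\star}(2|t)^2\le\max_j a_{ij}e_j(t)^2$ and combine it with Lemma~\ref{lem:cycle_linear}, whose proof uses only the terminal inequalities. This gives $V_I(e^{\nom,\star}(2|t))\le\theta V_I(e(t))$. Lemma~\ref{lem:corrected_actual_nominal_bound} bounds the current-component mismatch by $\rho_I(t)$. Then $(a+b)^2\le(1+\nu)a^2+(1+\nu^{-1})b^2$ applied componentwise gives \eqref{eq:twodgu_practical_current_estimate_new}, which is \eqref{eq:twodgu_a_b_recurrence}.

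For (iii)--(v), iterate \eqref{eq:twodgu_a_b_recurrence} along the two interlaced sequences $r+2k$, $r\in\{0,1\}$, exactly as in parts (d)--(g) of Theorem~\ref{thm:cln_RF_stability}:
\begin{itemize}
\item (iii) follows from the geometric-series bound together with $\|e\|_\infty^2=V_I(e)$;
\item (iv) follows from the convolution-splitting argument, since $e\to0$ means $I_s\to\bar I_s$;
\item (v) follows by inserting \eqref{eq:twodgu_intermediate_bound} at $t=0$ and the hypothesis on $\Delta_I(0)$, and then interpolating $a_I^k$ to obtain a $\KL$ bound in $\|e\|_\infty$.
\end{itemize}

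The main obstacle is justifying the closed-set use of Lemma~\ref{lem:cycle_linear} and the gain certificate. Since the $W_i$ depend only on the current error while the dynamics involve the voltages, the certificate on $\theta$ holds only on the admissibility domain, which is why the proposition assumes the states remain there. I would state explicitly that $\theta$ enters only through the OCP terminal constraint, so no voltage-convergence claim is needed. That is consistent with the final sentence of the statement.
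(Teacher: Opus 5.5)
Your proposal is correct and follows essentially the same route as the paper's brief argument: Theorem~\ref{thm:online_tube_rec_feas} via the scalar box and interval tests plus Corollary~\ref{cor:true_constraint_satisfaction} for (i), the pre-derived intermediate bound and the quadratic perturbation of the OCP terminal decay for (ii), and iteration along the interlaced sequences $\{2k\}$, $\{2k+1\}$ (geometric series, convolution splitting, interpolation of $a_I^k$) for (iii)--(v). One small caveat is that $p_1=p_2=1$ is not a free normalization of the LP \eqref{eq:log_scaling_lp}, since only one weight can be normalized; it is the convention forced by the constants in the statement ($b_I=1+\nu_I^{-1}$, $\|e\|_2^2\leq 2V_I(e)$, the $\|e\|_\infty$ bound), it requires the certified gains to satisfy $a_{ij}\leq\theta$ on every edge, and you should state it as a property of the design rather than as a normalization.
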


%\begin{proof}
%Item (i) is Theorem~\ref{thm:online_tube_rec_feas} specialized through the scalar tests \eqref{eq:twodgu_exact_shift_check_new}--\eqref{eq:twodgu_completion_interval_new}, followed by Corollary~\ref{cor:true_constraint_satisfaction}.  Equation \eqref{eq:twodgu_intermediate_bound} was established above and \eqref{eq:twodgu_a_b_recurrence} is \eqref{eq:twodgu_practical_current_estimate_new} with the definitions of $a_I,b_I$.  Iteration on the two interlaced sequences $\{2k\}$ and $\{2k+1\}$ gives the geometric-series bound in (iii).  If $\rho_I(t)\to0$, the corresponding stable convolution tends to zero, proving (iv).  Under the hypotheses of (v), the mismatch term in the two-step recurrence is zero and \eqref{eq:twodgu_intermediate_bound} at $t=0$ gives the initial bound for both interlaced sequences; hence \eqref{eq:twodgu_fulltime_AS} follows.  This is a $\KL$ estimate for $\|e\|_\infty$ after the standard interpolation of $a_I^k$.
%\end{proof}

\subsection{Simulation and Hardware Validation Results}
\label{subsec:twodgu_simulation_hardware_results}

Our prototype enjoys a modular, layered, traceable software architecture.
In particular, the prototype uses a hybrid communication network topology consisting of one Raspberry Pi~4 (RPi4) hosting local MPC controller per DGU with P2P communication between the controllers, and an Ubuntu Virtual Machine (VM) hosting the ZeroMQ proxy, coordinator and logger nodes.
The coordinator also hosts the load-current high-gain observer.
Each RPi4 publishes timestamped state, prediction, heartbeat, and diagnostic messages to the proxy's XSUB ingress on port \(6000\), and subscribes through its XPUB egress on port \(6001\) to the peer prediction and coordinator messages.
On each DGU-side, the RPi4 exchanges binary command/state packets with an ESP32 microcontroller over \(921600\)-baud USB serial.
The ESP32 owns the I\(^2\)C links to the TPS55289 actuator, INA228 current sensor, and ADS1115 bus-voltage sensor, while enforcing timestamp, watchdog, and safety checks.
Our proposed Algorithm~\ref{alg:dist_multi_step_mpc}, running on RPi4, is numerically solved by the CVXPY package in Python.
Electrically, the two bus nodes are coupled through the \(1.20\,\Omega\) line resistor.
The two-DGU DC microgrid prototype is shown in Fig.~\ref{fig:twodgu_hardware_architecture}.
Fig.~\ref{fig:twodgu_software_architecture} illustrates this modular, layered, and traceable implementation.

\begin{figure}[!t]
\centering
\includegraphics[width=0.8\linewidth]{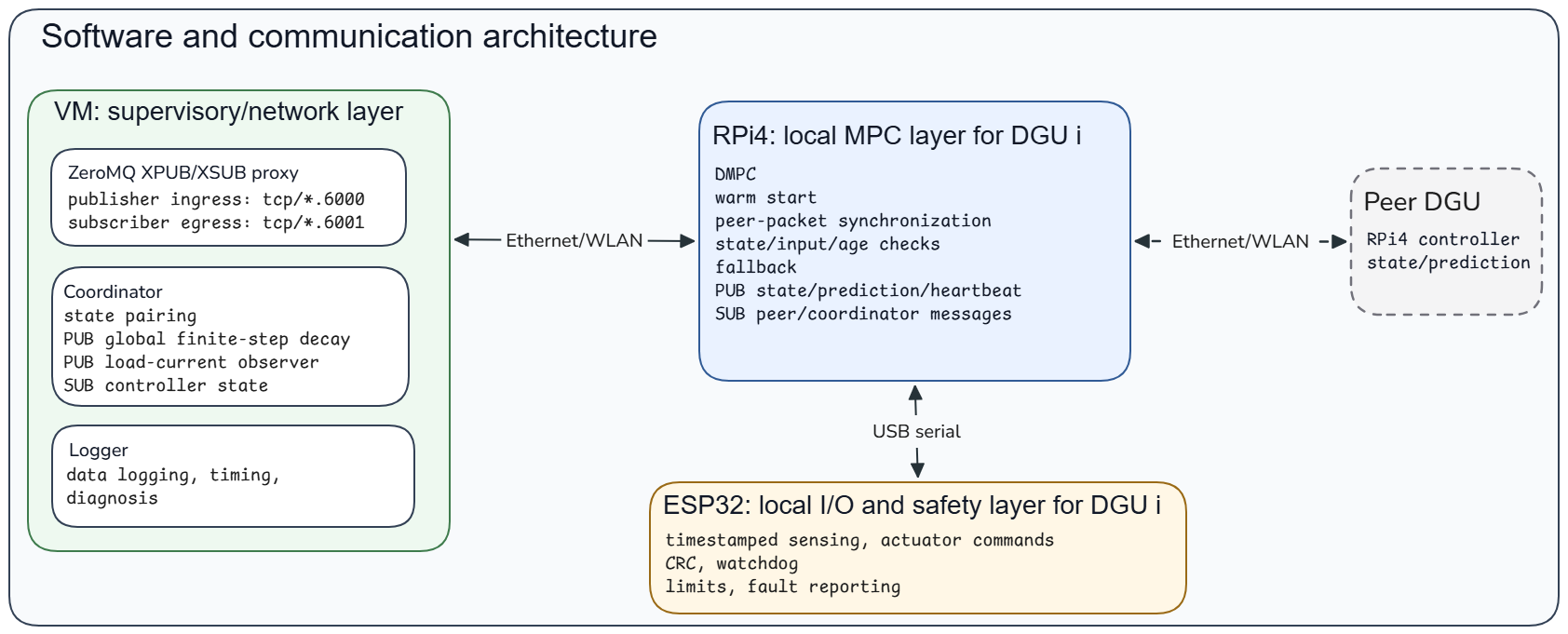}
\caption{Software and communication architecture around one representative DGU.  The peer MPC controller, coordinator, and logger communicate through the VM-side ZeroMQ XPUB/XSUB proxy; the RPi4 delegates deterministic I/O and local safety to the ESP32.}
\label{fig:twodgu_software_architecture}
\end{figure}

\begin{figure}[!t]
\centering
\includegraphics[height=0.5\linewidth]{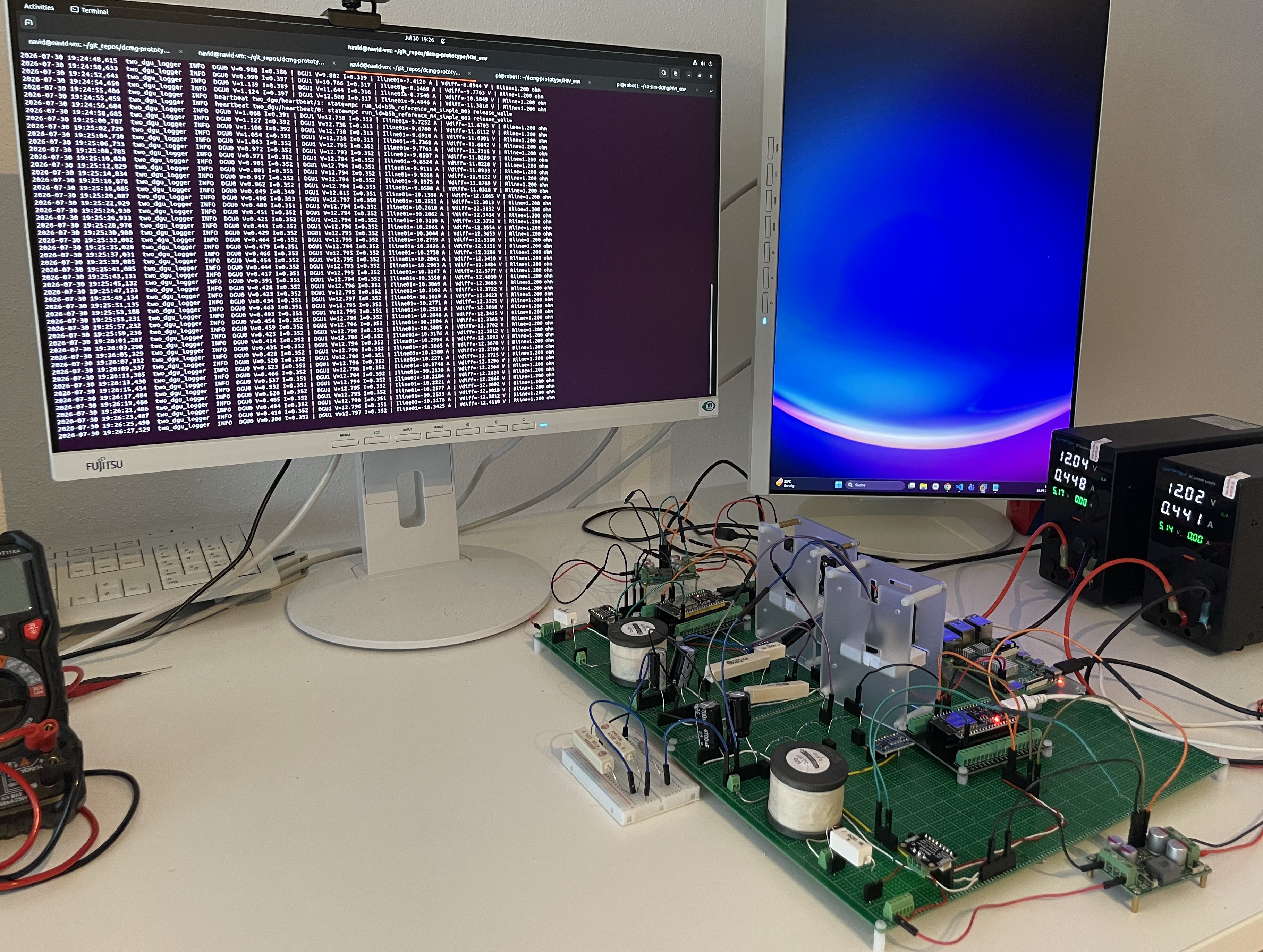}
\caption{Two-DGU DC microgrid laboratory prototype.}
\label{fig:twodgu_hardware_architecture}
\end{figure}

This subsection compares the simulation with the two-DGU laboratory prototype under the same nominal electrical parameters in Table~\ref{tab:dgu_params}, equal current-sharing weights, a $100\,\mathrm{ms}$ sampling period, and a practical prediction horizon of $M=4$.
While Proposition~\ref{prop:twodgu_rf_stability_new} holds $M=2$, one can see that the same feasibility and stability results, probably with larger bounds, also satisfied for any multiple of $n M$, $n \in \N$.
We take $M=4$ to enjoy advantages of longer horizon~\cite{Grune.2017a}.
A $235\,\Omega$ resistor was connected at DGU~1 during the shaded interval in Figs.~\ref{fig:twodgu_source_currents}--\ref{fig:twodgu_load_observer_error}.
At a $12\,\mathrm{V}$ bus voltage this perturbation represents approximately $51\,\mathrm{mA}$ of additional load current.  The simulation and hardware traces are shifted to the common comparison coordinate $\tau=0$ at the selected load-connection instant; the plotted pulse window is approximately $28.26\,\mathrm{s}$.

Fig.~\ref{fig:twodgu_source_currents} shows the source-current response before, during, and after the load pulse.  In the nominal exact-model simulation, both source currents are initially at $0.338\,\mathrm{A}$.  After the shunt is connected, the load observer raises the equal-sharing reference to approximately $0.363\,\mathrm{A}$, and the two simulated currents settle at the same value over the interior of the pulse interval.  Their mean tracking errors in this steady pulse window are below $1\,\mu\mathrm{A}$ in magnitude, and the root-mean-square weighted sharing mismatch $I_{s,1}-I_{s,2}$ is approximately $0.198\,\mathrm{mA}$.  The short oscillations at pulse connection and removal are bounded and the simulated currents remain between $0.327\,\mathrm{A}$ and $0.374\,\mathrm{A}$.

The hardware trajectories exhibit the same qualitative response. Before the disturbance, the mean measured currents are both approximately $0.335\,\mathrm{A}$.
In the steady part of the pulse they rise to $0.3553\,\mathrm{A}$ and $0.3554\,\mathrm{A}$, respectively.  The corresponding observer-based current reference is approximately $0.3588\,\mathrm{A}$, leaving mean absolute tracking offsets of about $3.48\,\mathrm{mA}$ and $3.43\,\mathrm{mA}$.  Despite this common-mode offset, the sharing objective is achieved accurately: the steady-pulse root-mean-square value of $I_{s,1}-I_{s,2}$ is approximately $0.571\,\mathrm{mA}$. Thus the principal distributed objective---equal source-current allocation for $W=I_2$---is substantially more accurate than the absolute current-reference tracking on the physical prototype.  The difference is consistent with current-sensor uncertainty, load-estimation bias, converter nonlinearities, and plant-parameter mismatch that are absent from the exact-model simulation.

\begin{figure}[!t]
\centering
\includegraphics[height=0.4\linewidth]{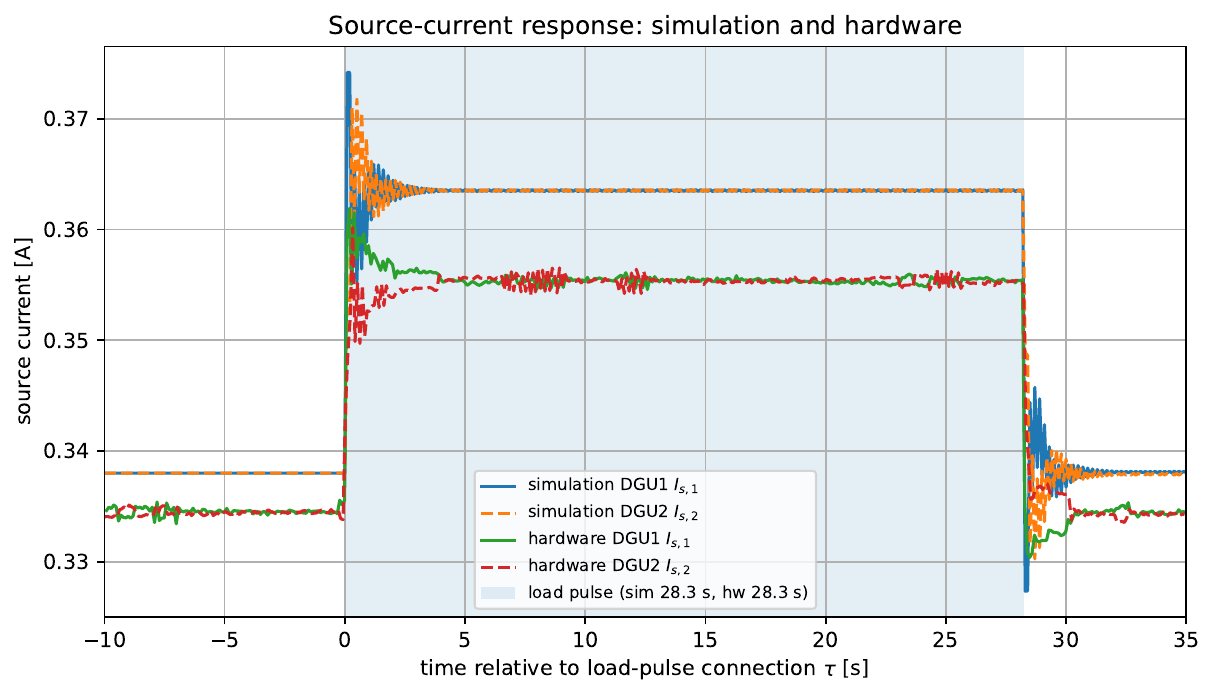}
\caption{Source-current response in simulation and hardware, aligned at the connection of the $235\,\Omega$ load at DGU~1.  Both implementations redistribute the additional load and recover the nominal sharing level after removal.}
\label{fig:twodgu_source_currents}
\end{figure}

%\subsubsection{Voltage safety and converter commands}
The voltage trajectories in Fig.~\ref{fig:twodgu_voltages} remain strictly inside the prescribed interval $[11,13]\,\mathrm{V}$.  In simulation, the complete-run voltage ranges are $[11.981,12.026]\,\mathrm{V}$ for DGU~1 and $[11.98,12.02]\,\mathrm{V}$ for DGU~2.  During the steady portion of the pulse their means are approximately $11.995\,\mathrm{V}$ and $12.004\,\mathrm{V}$, respectively.  The exact-model simulation therefore remains very close to the gauge-selected operating voltage.
The prototype has larger steady offsets, but preserves the safety objective.  Over the complete hardware MPC phase, the measured ranges are $[11.458,11.654]\,\mathrm{V}$ for DGU~1 and $[11.752,11.863]\,\mathrm{V}$ for DGU~2.  Their steady-pulse means are approximately $11.50\,\mathrm{V}$ and $11.79\,\mathrm{V}$.
The experiment therefore confirms constrained voltage regulation and safety, not convergence to the particular voltage gauge in \eqref{eq:twodgu_equilibrium_values_new}.
\begin{figure}[!t]
\centering
\includegraphics[height=0.4\linewidth]{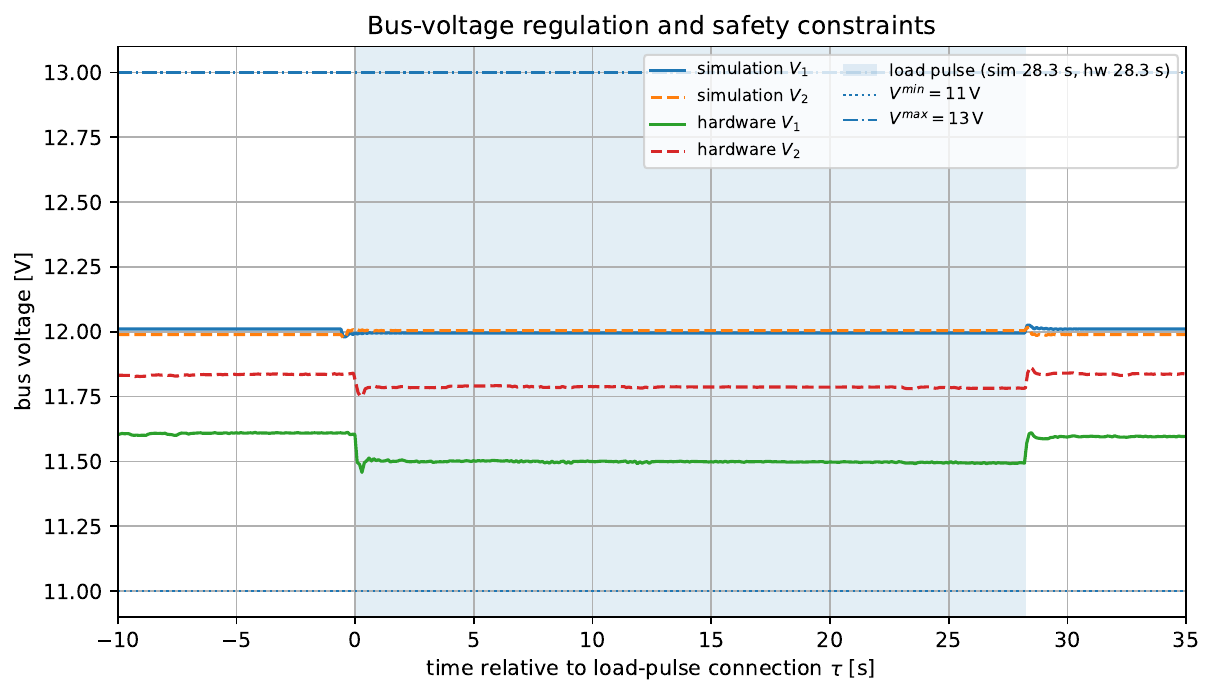}
\caption{Bus-voltage trajectories and the hard limits $V^{\min}=11\,\mathrm{V}$ and $V^{\max}=13\,\mathrm{V}$.  The simulation remains close to the nominal operating voltage, whereas the prototype exhibits larger offsets while preserving the safety interval.}
\label{fig:twodgu_voltages}
\end{figure}
Fig.~\ref{fig:twodgu_inputs} shows that the required control action also remains well inside the converter limits.  The simulated DGU commands range over $[12.28,12.31]\,\mathrm{V}$ and $[12.26,12.32]\,\mathrm{V}$.  The corresponding hardware ranges are $[12.09,12.29]\,\mathrm{V}$ and $[12.27,12.50]\,\mathrm{V}$.  Hence all commands retain margins of more than $2.5\,\mathrm{V}$ from the upper limit and more than $12\,\mathrm{V}$ from the lower limit.  The small bounded changes at the two switching instants demonstrate that the current redistribution is achieved without aggressive or saturating converter commands.

\begin{figure}[!t]
\centering
\includegraphics[height=0.4\linewidth]{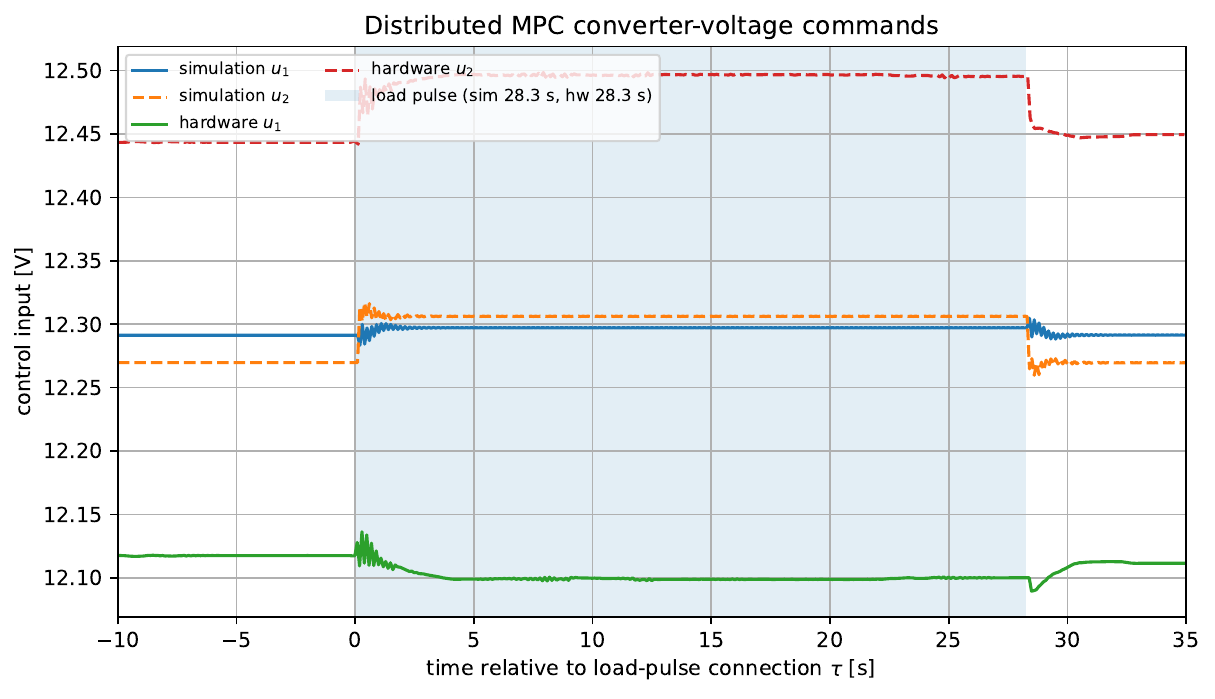}
\caption{DMPC converter-voltage commands.  Both the exact-model simulation and the hardware prototype remain strictly inside the admissible interval $[0,15]\,\mathrm{V}$.}
\label{fig:twodgu_inputs}
\end{figure}

Fig.~\ref{fig:twodgu_load_observer_error} compares the estimated load current with the known simulated load and with the hardware load profile reconstructed from the nominal current sink plus the measured bus voltage divided by $235\,\Omega$.

\begin{figure}[!t]
\centering
\includegraphics[height=0.4\linewidth]{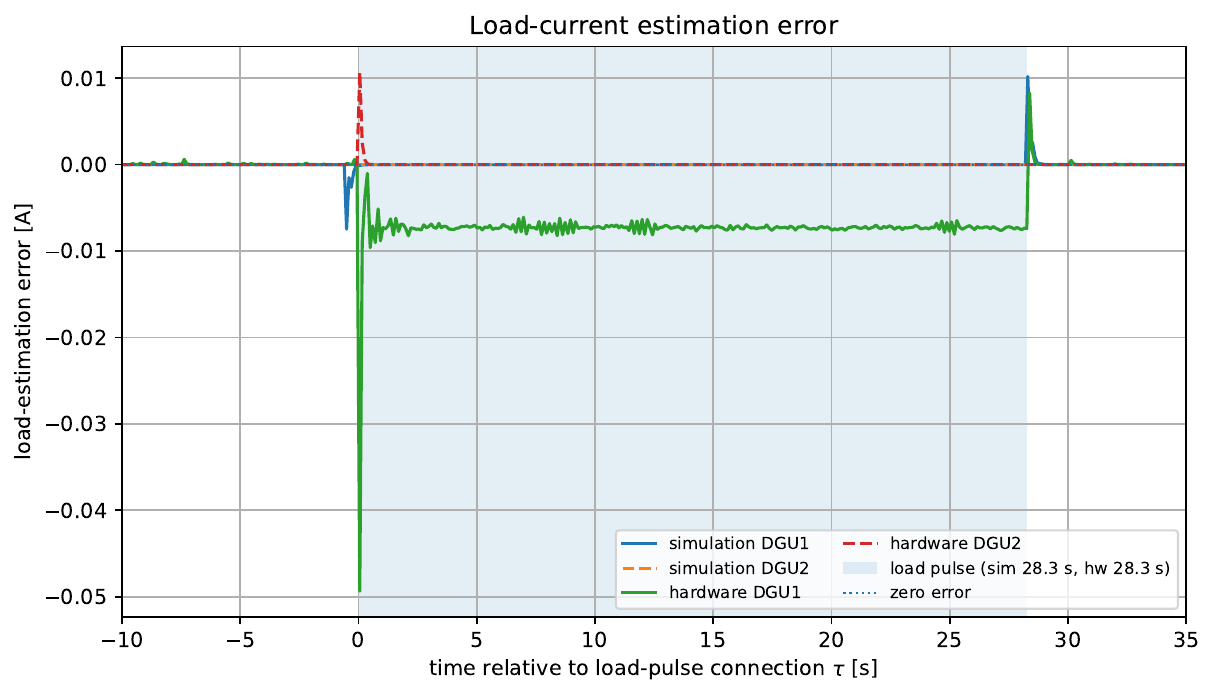}
\caption{Load-current estimation errors.  The exact-model simulation converges essentially to zero error; the prototype exhibits a bounded DGU~1 underestimation during the shunt pulse and negligible steady error at the undisturbed DGU~2.}
\label{fig:twodgu_load_observer_error}
\end{figure}

%\subsubsection{Timing, solver health, and synchronization integrity}
Table~\ref{tab:twodgu_validation_timing} summarizes the real-time and solver telemetry.  The strict simulation run contains exactly $1200$ samples for each controller, the plant, the coordinator observer, and the balance logger.  Every sequence covers steps $0$ through $1999$ without a duplicate or missing step.  The plant receives both commands at every step, the coordinator forms only exact same step state pairs with zero simulated-time skew, and no controller uses a synchronization fallback.  All $2400$ local QPs report an optimal status. 
Both controllers report zero solver failures and zero deadline misses.  The $95$th-percentile solve times are $20.045\,\mathrm{ms}$ and $20.817\,\mathrm{ms}$, while the maximum complete loop execution times are $24.633\,\mathrm{ms}$ and $25.446\,\mathrm{ms}$. 

\begin{table}[!t]
\caption{Timing and solver summary for the $M=4$, $T=100\,\mathrm{ms}$ validation runs.}
\label{tab:twodgu_validation_timing}
\centering
\small
\begin{tabular}{llrrrrrr}
\hline
Platform & DGU & Samples & Solve p95 & Loop p95 \\
 & & & [ms] & [ms] \\
\hline
Simulation & 1 & 1200 & 4.087 & -- \\
Simulation & 2 & 1200 & 4.376 & -- \\
Hardware & 1 & 1200 & 20.045 &  24.633 \\
Hardware & 2 & 1200 & 20.817 & 25.446 \\
\hline
\end{tabular}
\end{table}

\section{Conclusions}
This paper extended the constructive finite-step Lyapunov framework from centralized multi-step MPC to DMPC for constrained, dynamically coupled discrete-time systems.
The central motivation was that converse finite-step Lyapunov and non-conservative small-gain results permit simple local candidates and compatible gains to be constructed for a sufficiently large finite step $M$, without requiring one-step stabilizability of every subsystem.  The distributed difficulty is that each controller uses exchanged neighbor predictions.
The local OCPs use optimized state-constraint tightening radii and a finite-step small-gain terminal condition.  Recursive feasibility follows from shift-compatible state margins and a one-step terminal feasibility test.  The stability analysis separately bounds the prediction and reoptimization mismatch.  It yields an $M$-step practical Lyapunov estimate and an intermediate-time estimate with the explicit offset; a bounded terminal discrepancy gives practical boundedness, a vanishing discrepancy gives convergence.
The asymptotic stabilization is obtained when the intermediate-time offset vanishes with the initial Lyapunov level.
For constrained linear networks, the general bounds reduce to convex program tests, with a direct quadratic estimate in the terminal discrepancy.
The two-DGU specialization verifies the corresponding current-sharing and recursive-feasibility conditions.
Moreover, simulation and hardware experiments demonstrate current sharing, constraint satisfaction, feasible local optimization, and sufficient real-time margin at the sampling period.

\bibliographystyle{IEEEtran}
\bibliography{DistrubtedMutiStepMPC}

@ARTICLE{Nasirian.2014,
  author={Nasirian, Vahidreza and Moayedi, Seyedali and Davoudi, Ali and Lewis, Frank L.},
  journal={IEEE Transactions on Power Electronics}, 
  title={Distributed Cooperative Control of DC Microgrids}, 
  year={2015},
  volume={30},
  number={4},
  pages={2288-2303},
  doi={10.1109/TPEL.2014.2324579}}

@inproceedings{Noroozi2018IFACDCMicrogridMPC,
  title     = {Model predictive control of {DC} microgrids: current sharing and voltage regulation},
  author    = {Noroozi, Navid and Trip, Sebastian and Geiselhart, Roman},
  booktitle = {7th {IFAC} Workshop on Distributed Estimation and Control in Networked Systems},
  journal   = {{IFAC-PapersOnLine}},
  volume    = {51},
  number    = {23},
  pages     = {124--129},
  year      = {2018},
  doi       = {10.1016/j.ifacol.2018.12.022}
}

@article{Venkatasubramanian.1995,
	Author = {V. Venkatasubramanian and H. Schattler and J. Zaborszky},
	Journal = {IEEE Trans. Autom. Control},
	Number = {11},
	Pages = {1975--1982},
	Title = {Fast time-varying phasor analysis in the balanced three-phase large electric power system},
	Volume = {40},
	Year = {1995}}

@article{Sandberg2015CyberphysicalSurvey,
  title={Cyberphysical security in networked control systems: An introduction to the issue},
  author={Sandberg, Henrik and Amin, Saurabh and Johansson, Karl Henrik},
  journal={IEEE Control Systems Magazine},
  volume={35},
  number={1},
  pages={20--23},
  year={2015},
  publisher={IEEE},
  doi={10.1109/MCS.2014.2364708}
}

@Article{Geiselhart.2014c,
  Title                    = {An alternative converse {L}yapunov theorem for discrete-time systems},
  Author                   = {Geiselhart, Roman and Gielen, Rob H. and Lazar, Mircea and Wirth, Fabian R.},
  Journal                  = {Syst. Control Lett.},
  Year                     = {2014},
  Pages                    = {49--59},
  Volume                   = {70}
}

@article{Lucia.2015,
title = {Contract-based Predictive Control of Distributed Systems with Plug and Play Capabilities},
journal = {IFAC-PapersOnLine},
volume = {48},
number = {23},
pages = {205--211},
year = {2015},
note = {5th IFAC Conference on Nonlinear Model Predictive Control NMPC 2015},
author = {Sergio Lucia and Markus K{\"o}gel and Rolf Findeisen},
doi = {10.1016/j.ifacol.2015.11.284}
}

@article{Christofides.2013,
title = {Distributed Model Predictive Control: A Tutorial Review and Future Research Directions},
journal = {Computers \& Chemical Engineering},
volume = {51},
pages = {21--41},
year = {2013},
author = {Panagiotis D. Christofides and Riccardo Scattolini and David {Mu{\~n}oz de la Pe{\~n}a} and Jinfeng Liu},
doi = {10.1016/j.compchemeng.2012.05.011}
}

@Article{Geiselhart.2015,
  Title                    = {A Relaxed Small-Gain Theorem for Interconnected Discrete-Time Systems},
  Author                   = {Geiselhart, Roman and Lazar, Mircea and Wirth, Fabian R.},
  Journal                  = {IEEE Trans. Autom. Control},
  Year                     = {2015},
  Number                   = {3},
  Pages                    = {812--817},
  Volume                   = {60}
}

@Article{Geiselhart.2017,
  Title                    = {Equivalent types of {ISS} {L}yapunov functions for discontinuous discrete-time systems},
  Author                   = {Roman Geiselhart and Navid Noroozi},
  Journal                  = {Automatica},
  Year                     = {2017},
  Pages                    = {227--231},
  Volume                   = {84}
}

@Article{Gielen.2015,
  Title                    = {On stability analysis methods for large-scale discrete-time systems},
  Author                   = {Gielen, Rob H. and Lazar, Mircea},
  Journal                  = {Automatica},
  Year                     = {2015},
  Pages                    = {66--72},
  Volume                   = {55}
}

@Book{Grune.2017a,
  Title                    = {Nonlinear Model Predictive Control: Theory and Algorithms},
  Author                   = {Lars Gr{\"u}ne and J{\"u}rgen Pannek},
  Publisher                = {Springer},
  Year                     = {2017},

  Address                  = {London},
  Edition                  = {2nd}
}

@InProceedings{Hermans.2010,
  Title                    = {{Almost Decentralized Lyapunov-based Nonlinear Model Predictive Control}},
  Author                   = {Hermans, R. M. and Lazar, M. and Joki{\'c}, A.},
  Booktitle                = {2010 American Control Conference},
  Year                     = {2010},
  Pages                    = {3932--3938},
  Doi                      = {10.1109/ACC.2010.5530649}
}

@article{Scattolini.2009,
  author  = {Scattolini, R.},
  title   = {Architectures for Distributed and Hierarchical Model Predictive Control -- A Review},
  journal = {Journal of Process Control},
  year    = {2009},
  volume  = {19},
  number  = {5},
  pages   = {723--731},
  doi     = {10.1016/j.jprocont.2009.02.003}
}

@article{Noroozi.2020,
title = {Control of discrete-time nonlinear systems via finite-step control {L}yapunov functions},
journal = {Systems \& Control Letters},
volume = {138},
pages = {104631},
year = {2020},
issn = {0167--6911},
author = {Navid Noroozi and Roman Geiselhart and Lars Grüne and Fabian R. Wirth}
}

@ARTICLE{Noroozi.2018,
  author={Noroozi, Navid and Geiselhart, Roman and Grüne, Lars and Rüffer, Björn S. and Wirth, Fabian R.},
  journal={IEEE Transactions on Automatic Control}, 
  title={Nonconservative Discrete-Time ISS Small-Gain Conditions for Closed Sets}, 
  year={2018},
  volume={63},
  number={5},
  pages={1231--1242}}

@article{Ruffer.2010,
title = {Monotone inequalities, dynamical systems, and paths in the positive orthant of {E}uclidean n-space},
author                   = {R{\"u}ffer, Bj{\"o}rn S.},
journal                  = {Positivity},
year                     = {2010},
number                   = {2},
pages                    = {257--283},
volume                   = {14}
}

@article{Dunbar.2007,
  author  = {Dunbar, William B.},
  title   = {Distributed Receding Horizon Control of Dynamically Coupled Nonlinear Systems},
  journal = {IEEE Transactions on Automatic Control},
  year    = {2007},
  volume  = {52},
  number  = {7},
  pages   = {1249--1263},
  doi     = {10.1109/TAC.2007.900828}
}

@article{FarinaScattolini.2012,
  author  = {Farina, Marcello and Scattolini, Riccardo},
  title   = {Distributed Predictive Control: A Non-Cooperative Algorithm with Neighbor-to-Neighbor Communication for Linear Systems},
  journal = {Automatica},
  year    = {2012},
  volume  = {48},
  number  = {6},
  pages   = {1088--1096},
  doi     = {10.1016/j.automatica.2012.03.020}
}

@article{MagniScattolini.2006,
  author  = {Magni, Lalo and Scattolini, Riccardo},
  title   = {Stabilizing Decentralized Model Predictive Control of Nonlinear Systems},
  journal = {Automatica},
  year    = {2006},
  volume  = {42},
  number  = {7},
  pages   = {1231--1236},
  doi     = {10.1016/j.automatica.2006.02.010}
}

@article{RaimondoISS.2007,
  author  = {Raimondo, Davide M. and Magni, Lalo and Scattolini, Riccardo},
  title   = {Decentralized {MPC} of Nonlinear Systems: An Input-to-State Stability Approach},
  journal = {International Journal of Robust and Nonlinear Control},
  year    = {2007},
  volume  = {17},
  number  = {17},
  pages   = {1651--1667},
  doi     = {10.1002/rnc.1214}
}

@inproceedings{RaimondoIterative.2009,
  author    = {Raimondo, Davide M. and Hokayem, Peter and Lygeros, John and Morari, Manfred},
  title     = {An Iterative Decentralized {MPC} Algorithm for Large-Scale Nonlinear Systems},
  booktitle = {IFAC Proceedings Volumes},
  year      = {2009},
  volume    = {42},
  number    = {20},
  pages     = {162--167},
  doi       = {10.3182/20090924-3-IT-4005.00028}
}

@article{ZhengSmallGain.2025,
  author  = {Zheng, Yi and Liu, Qibo and Zhao, Qingchun and Wang, Yanye and Li, Shaoyuan},
  title   = {Small-Gain Based Distributed Model Predictive Control of Nonlinear Continuous Processes},
  journal = {Chemical Engineering Research and Design},
  year    = {2025},
  volume  = {223},
  pages   = {177--184},
  doi     = {10.1016/j.cherd.2025.09.026}
}

@article{TripExperimental.2018,
  author  = {Trip, Sebastian and Han, Renke and Cucuzzella, Michele and Cheng, Xiaodong and Scherpen, Jacquelien M. A. and Guerrero, Josep M.},
  title   = {Distributed Averaging Control for Voltage Regulation and Current Sharing in {DC} Microgrids: Modelling and Experimental Validation},
  journal = {IFAC-PapersOnLine},
  year    = {2018},
  volume  = {51},
  number  = {23},
  pages   = {242--247},
  doi     = {10.1016/j.ifacol.2018.12.042}
}

\end{document}